\title{On groups with definable $f$-generics definable in $p$-adically closed fields}
\date{\today}
\author{Anand Pillay \\
  \text{University of Notre Dame} \and Ningyuan Yao \\
  \text{Fudan University}}
\newtheorem{Theorem}{Theorem}
\newtheorem{Thm}{Theorem}[section]
\newtheorem{Prop}[Thm]{Proposition}
\newtheorem{Def}[Thm]{Definition}
\newtheorem{Rmk}[Thm]{Remark}
\newtheorem{Lemma}[Thm]{Lemma}
\newtheorem{Cor}[Thm]{Corollary}
\newtheorem{Fact}[Thm]{Fact}
\newtheorem{Question}{Question}
\newtheorem{Conjecture}{Conjecture}
\newtheorem*{Claim}{Claim}
\newcommand{\R}{\mathbb R}
\newcommand{\Q}{{\mathbb Q}_p}
\newcommand{\Z}{{\mathbb Z}_p}
\newcommand{\N}{\mathbb N}
\newcommand{\C}{\mathbb C}
\newcommand{\M}{\mathbb M}
\newcommand{\K}{\mathbb K}
\newcommand{\Ga}{{\mathbb G}_a}
\newcommand{\Gm}{{\mathbb G}_m}
\newcommand{\V}{{\cal O}}
\newcommand{\ra}{\longrightarrow}
\newcommand{\lra}{\longrightarrow}
\newcommand{\sq}{\subseteq}
\DeclareMathOperator{\Th}{Th}
\DeclareMathOperator{\im}{im}
\DeclareMathOperator{\id}{id}
\DeclareMathOperator{\tp}{tp}
\DeclareMathOperator{\dcl}{dcl}
\DeclareMathOperator{\st}{st}
\DeclareMathOperator{\Ker}{Ker}
\DeclareMathOperator{\cl}{cl}
\DeclareMathOperator{\acl}{acl}
\DeclareMathOperator{\rad}{rad}
\def\blue{\textcolor{blue}}
\begin{document}
\maketitle
\begin{abstract}
The aim of this paper is to develop the theory of groups definable in the $p$-adic field $\Q$, with ``definable $f$-generics" in the sense of an ambient saturated elementary extension of $\Q$. We call such groups \emph{definable $f$-generic} groups.

So, by a ``definable f-generic'' or {\em dfg} group we mean a definable group in a saturated model with a global f-generic type which is definable over a small model.  In the present context the group is definable over $\Q$, and the small model will be $\Q$ itself. The notion of a dfg group is dual, or rather opposite to that of an fsg group (group with ``finitely satisfiable generics") and is a useful tool to describe the analogue of torsion free o-minimal groups in the $p$-adic context.

In the current paper our group will be definable over $\Q$ in an ambient saturated elementary extension $\K$ of $\Q$, so as to make sense of the notions of $f$-generic etc.
In this paper we will show that every definable $f$-generic group definable in $\Q$ is 
virtually isomorphic to a finite index subgroup of a trigonalizable algebraic group over $\Q$. This is analogous to the $o$-minimal context, where every connected torsion free group definable in $\R$ is isomorphic to a trigonalizable algebraic group (Lemma 3.4, \cite{COS}).  We will also show that every open definable $f$-generic subgroup of a definable $f$-generic group has finite index, and every $f$-generic type of a definable $f$-generic group is almost periodic, which gives a positive answer to the problem raised in \cite{P-Y} of whether $f$-generic types coincide with almost periodic types in the $p$-adic case.
\end{abstract}

\section{Introduction}
In the recent years there has been growing interest in the interaction between topological
dynamics and model theory. This approach was introduced by Newelski \cite{Newelski},  then developed in a number of papers, including \cite{Pillay-TD}, \cite{G. Jagiella}, \cite{CS}, \cite{Y-L} and \cite{P-Y}, and now called definable topological dynamics. Definable topological dynamics studies the action of a group $G$ definable in a structure $M$ on its type space $S_G(M)$ and tries to link the invariants suggested by  topological dynamics (e.g.  enveloping semigroups,  minimal subflows, Ellis groups, ...) with model-theoretic invariants.

This framework allows us to generalize stable group theory to some unstable groups with tame behaviour. Under the assumption that the theory $T$ has $NIP$,  the class of definably amenable groups  is a reasonable choice for the class of stable-like groups in $NIP$ context since Newelski's conjecture \cite{Newelski} holds.
Namely, the smallest type-definable subgroup of $G$ of bounded-index, written $G^{00}$, exists and $G/G^{00}$ is isomorphic to its Ellis group (see \cite{CS}). In the stable case, this coincides with the space (group) of   generic types of $G$.

We will now be assuming that our ambient theory $T$ is $NIP$.

    As generic types may not always exist in the $NIP$ environment, other
    notions were introduced to describe the generic-like formulas and types. The notion of $f$-generic was first introduced in \cite{$NIP$II} and then slightly modified in \cite{CS}.
    We let $\M$ be a so-called ``monster" model (of whatever complete theory we are working with). Recall that this means that $\M$ is $\kappa$-saturated and $\kappa$-strongly homomogeneous for some
    suitably large $\kappa$. If the reader is willing to make some set-theoretic assumptions, they can just take $\M$ to be $\kappa$-saturated and of cardinality $\kappa$ for suitable $\kappa$.
    Let $G$ be a group definable in $\M$. A definable subset (or formula) $X\subseteq G$ (definable over $M$) is said to be $f$-generic if for every $g\in G$, the left translate $gX$ does not fork over $M$. A global type $p\in S_G(\M)$ is $f$-generic if every formula in $p$ is $f$-generic. A global $f$-generic type $p$ is strongly  $f$-generic if  every $G$-translate of $p$ does not fork over a fixed small submodel $M$ (equivalently by $NIP$ is invariant under automorphisms fixing $M$).


One of the nice observations in \cite{CS} was that an $NIP$ group is definably amenable if and only if it admits a global  strongly   $f$-generic type. Now there are two extreme case for a global strongly  $f$-generic type $p$ in $NIP$ environment:
\begin{enumerate}
\item [(i)]  $p$ (as well as every left translate) is finitely satisfiable in some fixed small model $M$. In this case $p$ is actually generic, meaning that for every formula $\phi$ in $p$, finitely many translates of $\phi$ cover $G$. We call $p$ a finitely satisfiable generic, abbreviated as \underline{$fsg$}.
\item [(ii)] $p$ (so also all its translates) is definable over some fixed small model $M$, and we call $p$ a definable $f$-generic, abbreviated as \underline{$dfg$}.
\end{enumerate}
A definable group $G$ is said to be an $fsg$ group if it admits a global $fsg$ type, and a $dfg$ group if it admits a global $dfg$ type.

In \cite{$NIP$}, Hrushovski, Peterzil, and Pillay proved that $fsg$ groups coincide with definably compact groups in  $o$-minimal expansions of real closed field. Moreover, in \cite{O-P},  Onshuus and Pillay proved that it is also true for groups definable in $\Q$. The above results say that the model-theoretic concept of $fsg$ is able to describe the topological concept of ``compactness" in $o$-minimal and $p$-adic context.

On the other hand, putting various results together, we know that for definably connected groups definable in $o$-minimal expansions of $RCF$, torsion-free is equivalent to being $dfg$. Here are some details. The left implies right direction is precisely Proposition 4.7 in \cite{CP-connected-component}.
For the right implies left direction: The proof of Lemma 1.15 in \cite{P-Y} shows that if $G$ is $dfg$, then there is a $G^{0}$-invariant definable type, so by connectedness there is a $G$-invariant definable type $p$. Now we know $G$ to be definably amenable, so by Propositions 2.6 and 4.6 of \cite{CP-connected-component}, there is a normal definable torsion-free (solvable) subgroup $H$ of $G$ such that $G/H$ is definably compact (and definably connected). We claim that $G/H$ is trivial, which will show that $G = H$ is torsion-free. Assuming $G/H$ nontrivial, $p$ gives rise to a $G/H$-invariant type $p/H$ of $G/H$, which implies that $(G/H)^{00} = G/H$, a contradiction to definable compactness of $G/H$ (for example to the truth of Pillay's conjecture).

Since torsion-free means ``totally non-definably compact" for groups definable in $o$-minimal expansions of $RCF$ this translates into $dfg$ meaning ``totally non $fsg$" in the $o$-minimal context. This observation is also witnessed by the following:
\begin{Fact}\cite{Simon-dis-nodis}
$\Th(\Q)$ and $o$-minimal theories are distal.
\end{Fact}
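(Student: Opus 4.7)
The plan is to verify Simon's combinatorial characterization of distality separately for each class of theories. Recall that an NIP theory $T$ is distal if and only if, for every indiscernible sequence $(a_i)_{i \in I}$ indexed by a dense linear order without endpoints, every Dedekind cut $I = I_1 + I_2$ with both $I_1$ and $I_2$ infinite, and every tuple $b$, whenever the sequence $(a_i)_{i \in I_1 \cup I_2}$ is $b$-indiscernible then $(a_i)_{i \in I}$ is $b$-indiscernible. Equivalently, the type of the element ``inserted'' at each cut is uniquely determined even in the presence of extra parameters.

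For o-minimal theories, I would exploit cell decomposition together with the classical fact that every infinite indiscernible sequence is, up to reindexing, either constant or strictly monotone in each coordinate. Then the type of an element over a model $M$ is controlled by the cuts it makes with respect to $M$-definable continuous functions, and the ``left limit'' and ``right limit'' types of an indiscernible sequence at a cut agree, being determined by the same cut data. When the parameter set is extended from $M$ to $Mb$, cell decomposition applied to $b$-definable sets preserves this cut-determined behaviour, giving distality.

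For $\Th(\Q)$, the strategy is the same but with $p$-adic cell decomposition (Denef, Scowcroft--van den Dries, Cluckers) replacing its o-minimal counterpart. Types over a model in the $p$-adic field are controlled by the tree of balls, together with valuation and residue data, and an indiscernible sequence in $\Q$ converges to a well-defined limit type in this tree. The uniqueness of the ``insertion type'' at each cut then follows, yielding distality. The main obstacle in both cases is the bookkeeping at a cut when extra parameters $b$ are added: one must check that the insertion type remains uniquely determined after passing from $M$ to $Mb$, which is precisely where the appropriate cell decomposition does the work, as carried out by Simon in \cite{Simon-distal}.
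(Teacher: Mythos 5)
This statement is a quoted \textbf{Fact} from Simon's work; the paper under review contains no proof of it, so there is no internal argument to compare yours against. Judged on its own, your proposal is a plan rather than a proof, and it defers the essential difficulty to the one step it does not carry out. Two concrete gaps. First, distality quantifies over indiscernible sequences of $n$-tuples for every $n$, whereas your o-minimal argument (``monotone in each coordinate'') and your $p$-adic argument (``the tree of balls'') only make sense for singletons; the reduction of distality to sequences of singletons is itself a non-trivial matter and needs either an argument or a citation. Second, and more seriously, the sentence ``one must check that the insertion type remains uniquely determined after passing from $M$ to $Mb$, which is precisely where the appropriate cell decomposition does the work'' is circular: that uniqueness \emph{is} the statement of distality. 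Cell decomposition over $Mb$ partitions $b$-definable sets into cells, but you still have to show that which cell the inserted element falls into is determined by data coming from the indiscernible sequence alone (its cut, its coset/valuation data), and nothing in the proposal actually does this.

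It is also worth noting that Simon's actual route is different and cleaner than a direct cell-decomposition computation: he characterizes distality in terms of generically stable objects (a theory is distal iff every generically stable measure is smooth, and a dp-minimal theory is distal iff it has no non-realized generically stable invariant type), and then observes that the ordering on a real closed field, respectively the valuation on $\Q$, forbids non-realized generically stable types. The present paper already records both ingredients for the $p$-adic case: dp-minimality of $\Th(\Q)$ (Fact \ref{Q-p-is-dp-minimal}) and the absence of non-algebraic generically stable global types (Fact \ref{no-generical-stable-types}). If you want a self-contained justification of the Fact, assembling those two statements via Simon's criterion is the path to take; the cell-decomposition approach could in principle be made to work, but as written it assumes what it needs to prove.
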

Note that distality of a theory implies that no global types can be both definable and finitely satisfiable (in a small model), and in fact such types are weakly orthogonal.

Anyway it is natural to ask what happens in the $p$-adic context. We will be working with $Th(\Q)$ in the ring language and we will actually be restricting ourselves to groups definable in or over the standard model $\Q$.

\begin{Question}\label{Main Question 1}
 Are $dfg$ groups  exactly ``totally noncompact" groups in this $p$-adic context?
\end{Question}
The aim of the paper is to answer this question. Our first result is \blue{a} structure theorem for $dfg$ groups over $\Q$,  which gives a positive answer to Question \ref{Main Question 1}.
\begin{Theorem}\label{Main theorem 1}
Let $G$ be a group definable over $\Q$ with $dfg$, then there is a normal sequence
\[
G_0\vartriangleleft ... \vartriangleleft G_i \vartriangleleft G_{i+1} \vartriangleleft... \vartriangleleft G_n
\]
where the $G_{i}$ are definable over $\Q$, and
such that $G_0$ is finite,  $G_n$ is a finite index subgroup of $G$, and  each $G_{i+1}/G_i$ is definably (over $\Q$) isomorphic to either the additive group $\Ga$, or a finite index subgroup of the multiplicative group $\Gm$.
\end{Theorem}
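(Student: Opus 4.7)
The plan is to adapt the Conversano--Pillay proof of Lemma 3.4 in \cite{COS} for torsion-free $o$-minimal groups to the $dfg$ setting over $\Q$. First I would reduce to an algebraic group: by the $p$-adic analogue of the Hrushovski--Pillay structure theorem, any group definable in $\Q$ has a definable finite-index subgroup that is definably isomorphic, modulo a finite kernel, to the $\Q$-points of some $\Q$-algebraic group $H$. Since $dfg$ is inherited by finite-index subgroups and by quotients by finite normal subgroups, $H(\Q)$ is also $dfg$. The finite kernel becomes $G_0$, the algebraic envelope becomes $G_n$, and the task is then to produce the intermediate terms inside $H(\Q)$.

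The crux is to show $H^0$ is trigonalizable over $\Q$. The guiding slogan is that ``$dfg$ means totally noncompact'': a $dfg$ group should admit no positive-dimensional definably compact subgroup. Here I would exploit that Onshuus--Pillay identify $fsg$ with definable compactness in the $p$-adic context, and that distality of $\Th(\Q)$ (Fact 1.1) makes global $dfg$ and $fsg$ types weakly orthogonal. Combining this orthogonality with the fact that every translate of the $dfg$ type remains non-forking, I would rule out any definably compact subgroup of positive dimension. Algebraically this amounts to saying that $H$ has no anisotropic tori and no compact-type reductive quotient; equivalently $H^0$ is solvable with all maximal tori $\Q$-split, and then Lie--Kolchin trigonalizes $H^0$ over $\Q$.

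To conclude, I would use the standard algebraic composition series of a connected split solvable group: refine through the derived series of the unipotent radical, whose successive quotients are $\Ga$, and through the maximal split torus, whose composition factors are $\Gm$. Passing to $\Q$-points, and replacing each $\Gm$-factor by a definable finite-index subgroup where necessary (to absorb finite torsion that may disrupt a precise isomorphism of $\Q$-points), yields the required normal series with $G_0$ finite and $G_n$ of finite index in $G$. The main obstacle is the trigonalization step: rigorously converting the model-theoretic orthogonality ``no $fsg$-type in $G$'' into the algebraic conclusion ``$H^0$ has no compact pieces'', and ensuring $dfg$ is preserved by each quotient used in the induction on $\dim H$. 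The reduction and finite-kernel bookkeeping should be routine given the existing $p$-adic structural results, so the whole argument stands or falls with the distality-driven trigonalization of $H^0$.
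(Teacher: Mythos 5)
Your proposal follows essentially the same route as the paper: reduce via the Hrushovski--Pillay/Montenegro--Onshuus--Simon structure theorem to an open $dfg$ subgroup of a $\Q$-algebraic group $H$ (with the finite kernel and finite index absorbed into $G_0$ and $G_n$), use distality of $\Th(\Q)$ --- a definable $f$-generic type pushed to a definably compact quotient would also be finitely satisfiable in $\Q$ --- to kill the abelian-variety, semisimple, and anisotropic-torus quotients so that $H$ is linear, solvable, split, hence trigonalizable over $\Q$, and then refine a composition series with $\Ga$- and (finite-index-in-)$\Gm$-factors by induction on dimension. The only point to phrase carefully is that the compactness obstruction concerns definably compact \emph{quotients} $G/A$ rather than subgroups, which is exactly what the paper's key lemma establishes, together with the verification (via a central one-dimensional subgroup and definable Skolem functions) that $dfg$ passes to each quotient in the induction.
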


Recall that if $G$ is a definable group in a structure $M$, then by definition a type in $S_{G}(M)$ is almost periodic if the closure of its $G$-orbit is a minimal $G$-invariant closed subset of $S_{G}(M)$.  In the following, the ambient model $M$ is assumed to be saturated.
The following conjecture was raised in \cite{P-Y}
\begin{Conjecture}\label{Main Conjecture 2}
 Let $G$ be a $dfg$ group definable in an $NIP$ structure. Then any global f-generic  type
is almost periodic.
\end{Conjecture}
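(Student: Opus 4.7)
Fix a $dfg$ group $G$ definable in a NIP structure, a global $dfg$ type $p_0 \in S_G(\M)$ definable over a small model $M_0$, and an arbitrary global $f$-generic type $q$. The goal is to show $q$ is almost periodic, equivalently that the orbit closure $\overline{G \cdot q}$ is a minimal $G$-subflow of $S_G(\M)$.

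The overall strategy is to first show that the $dfg$ type $p_0$ is itself almost periodic and then transfer this almost periodicity to $q$ via the Ellis semigroup structure on $S_G(\M)$. Let $E = E(S_G(\M))$ be the Ellis semigroup, $L \subseteq E$ a minimal left ideal, and $u \in L$ a minimal idempotent; by the Chernikov--Simon theorem the Ellis group $uL$ is canonically isomorphic to $G/G^{00}$. Because $p_0$ is definable, right multiplication $\beta \mapsto \beta \ast p_0$ is continuous on $E$, so $L \ast p_0$ is a closed minimal $G$-subflow of the orbit closure $X_0 = \overline{G \cdot p_0}$. The key step is to produce a minimal idempotent $u$ with $u \ast p_0 = p_0$, which forces $p_0 \in L \ast p_0 = X_0$ and hence almost periodicity of $p_0$. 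For an arbitrary $f$-generic $q$, pick a minimal idempotent $v$ with $v \ast q = q$ and consider $v \ast p_0 \in X_0$; using the Ellis group action together with the definability of $p_0$, one transfers the almost periodicity of the point $v \ast p_0$ to $q$ via a suitable translate in $G/G^{00}$, concluding that $\overline{G \cdot q}$ is minimal.

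The main obstacle is the key step: showing that some minimal idempotent fixes the definable $f$-generic $p_0$. In full NIP generality this is tantamount to the conjecture for the single type $p_0$ and does not follow from definability alone. The paper resolves this in the $p$-adic setting by appealing to Theorem~\ref{Main theorem 1}: every $dfg$ group definable over $\Q$ has, on a finite-index subgroup, a normal series whose quotients are $\Ga$ or finite-index subgroups of $\Gm$, reducing the conjecture to trigonalizable algebraic groups over $\Q$, where the coincidence of $f$-generic types and almost periodic types is established in \cite{Y-Trigonalizable}. A lifting argument along the normal series, using that extensions of almost periodic subflows by definably amenable quotients remain almost periodic, propagates the result back to $G$. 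The general NIP case of the conjecture would follow from an analogous Ellis-semigroup fixed-point theorem for definable $f$-generic types, and this is the essential missing ingredient outside the $p$-adic context.
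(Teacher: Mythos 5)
You have correctly recognized that the statement as posed is a conjecture in full NIP generality, that the paper establishes it only for groups definable over $\Q$, and that the $p$-adic resolution goes through a reduction to trigonalizable algebraic groups together with the main result of \cite{Y-Trigonalizable}; so, for the case the paper actually proves, your route is essentially the paper's. Two corrections are worth making. First, your Ellis-semigroup preamble is an honest dead end and you say as much: producing a minimal idempotent $u$ with $u\ast p_0=p_0$ is literally equivalent to the almost periodicity of $p_0$, so nothing is gained by that reformulation. (Also, continuity of $\beta\mapsto\beta\ast p_0$ holds for every $p_0$ in the left-topological semigroup $(S_G(\M),\ast)$; definability of $p_0$ is not what that step uses.) Second, the mechanism of the paper's reduction is not the ``lifting along the normal series'' you describe: the paper does not induct on the normal series of Theorem \ref{Main theorem 1} to propagate almost periodicity through extensions. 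Instead it uses Corollary \ref{fsg-dfg-f-to-1} to replace $G$ by a finite-index subgroup $A$ admitting a finite-to-one homomorphism onto an open subgroup of an algebraic group $H$, Proposition \ref{open dfg subgroup of alg groups} to see that this image has finite index in $H$, Corollary \ref{algebraic groups with dfg subgroup is trigonalizable over Qp} to see that $H$ is trigonalizable over $\Q$, and then quotes wholesale from \cite{Y-Trigonalizable} that every $f$-generic type of a trigonalizable algebraic group over $\Q$ is almost periodic. The transfer of almost periodicity back through the finite-to-one map and the finite-index inclusion is left implicit in the paper (``we easily conclude''); if you want a complete argument, that correspondence of $f$-generic types and of minimal subflows under a definable finite-to-one surjection with finite-index image is the step to write out, rather than an extension argument for the normal series or an Ellis-semigroup fixed-point theorem.
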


The following theorem  gives a positive answer to Conjecture \ref{Main Conjecture 2} in the $p$-adic case. The proof makes use of a recent result in \cite{Y-Trigonalizable}.

\begin{Theorem}
Suppose that $G$ is a $dfg$ group definable over $\Q$, then every global $f$-generic type of $G$ is almost periodic.
\end{Theorem}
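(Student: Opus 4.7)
The plan is to deduce the theorem from Theorem 1.2 together with the recent result in \cite{Y-Trigonalizable}. By Theorem 1.2, after quotienting by a finite normal subgroup $G_0$ and passing to a finite index subgroup $G_n\le G$, the quotient $G_n/G_0$ is a successive definable extension by $\Ga$'s and by finite index subgroups of $\Gm$; in particular, it is definably isomorphic to a finite index subgroup of some trigonalizable algebraic group $H$ over $\Q$. The problem for $G$ then reduces to the same problem for $H$, together with two transfer steps (quotient by the finite kernel $G_0$ and passage to/from the finite index subgroup $G_n$).

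Next I would invoke \cite{Y-Trigonalizable}, treated as a black box: for a trigonalizable algebraic group $H$ over $\Q$, every global $f$-generic type of $H$ (and of any finite index definable subgroup) is almost periodic with respect to the $H$-action on $S_H(\M)$. Equivalently, the set of global $f$-generic types is the union of the minimal $H$-subflows of $S_H(\M)$.

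The transfer then rests on two observations about topological dynamics of definable groups. First, if $G_n\le G$ has finite index, the $G$-orbit of a global type is a finite union of $G_n$-orbits, hence minimal $G$-subflows and minimal $G_n$-subflows correspond bijectively after decomposing by coset representatives, and $f$-genericity is preserved in both directions (by translating by these representatives). Second, if $G_0\vartriangleleft G_n$ is a finite normal subgroup, then the natural map $S_{G_n}(\M)\to S_{G_n/G_0}(\M)$ is a continuous equivariant surjection that sends minimal flows to minimal flows and $f$-generic types to $f$-generic types, and every $f$-generic of $G_n/G_0$ lifts to one of $G_n$. Combining these observations with the definable isomorphism of $G_n/G_0$ onto a finite index subgroup of $H$ produces an equivariant correspondence between $f$-generic types of $G$ and those of $H$ that preserves minimal subflows.

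The main obstacle I anticipate is the careful bookkeeping in this last transfer, particularly showing that an arbitrary global $f$-generic type of $G$ (not only one explicitly in $dfg$-form) corresponds under the chain of finite-to-one projections and inclusions to an $f$-generic type of $H$ and that its $G$-orbit closure matches the $H$-orbit closure on the nose. Once this equivariant identification of the relevant dynamics is in place, the cited result for the trigonalizable $H$ delivers almost periodicity, and pulling back along the correspondence gives the theorem.
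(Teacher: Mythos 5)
Your proposal follows essentially the same route as the paper: reduce $G$, via a finite index subgroup and a finite kernel, to an open $dfg$ subgroup of a trigonalizable algebraic group $H$ over $\Q$ (which by Proposition \ref{open dfg subgroup of alg groups} has finite index in $H$), and then invoke the result of \cite{Y-Trigonalizable} that every $f$-generic type of such an $H$ is almost periodic. The transfer steps you describe (finite index subgroups and finite normal quotients preserving $f$-genericity and minimality of orbit closures) are exactly what the paper leaves implicit in its one-line deduction, so the argument is correct and not genuinely different.
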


The paper is organized as follows. In the rest of this introduction we will recall precise notation, definitions and results  from earlier papers, relevant to our results. In Section 2.1, we will prove some general results for  $dfg$ and $fsg$ groups. In Section 2.2, we will characterise the one-dimensional $dfg$ groups definable over $\Q$.  Section 2.3 contains the key lemmas of the paper, where we proved that every algebraic group containing an open $dfg$ subgroup is trigonalizable. In section 2.4,  we prove a structure theorem for an arbitrary  $dfg$ group definable over $\Q$, showing that every $dfg$ group is virtually trigonalizable over $\Q$, and concluding that every  global $f$-generic type of a $dfg$ group is almost periodic.

\subsection{Notations}
We will assume a basic knowledge of model theory. Good references are \cite{Pzt-book} and \cite{M-book}. Let $T$ be a complete theory in a language $L$ and let $\M$ be the monster model, in which every type over a small subset  $A\subset \M$ is realized, where ``small" means $|A|<|\M|$.
We will assume $T$ to be $1$-sorted. By $x,y,z$ we mean finite tuples of variables,  and $a,b,c\in \M$  finite tuples from $\M$. For a subset $A$ of $\M$, $L_{A}$ is the language obtained from $L$ by adjoining constants for elements of $A$. As a rule, by a formula we will mean   an $L_\M$-formula (unless otherwise stated). For an arbitrary elementary submodel $M$ of $\M$ and an $L_M$-formula $\phi(x)$, $\phi(M)$ denotes the definable subset of $M^{|x|}$ defined by $\phi$, and a set $X\subseteq M^n$ is  definable  if  there is an $L_M$-formula $\phi(x)$  such that $X=\phi(M)$. If $\bar X\subseteq \M^n$ is definable, defined with parameters from $M$, then $\bar X(M)$ will denote $\bar X\cap M^n$, the realizations from $M$, which is clearly a definable subset of $M^n$. Suppose that $X\subseteq \M^n$ is a definable set, defined with parameters from $M$, then we write $S_X(M)$ for the space of complete types concentrating on $X$. We use freely basic notions of model theory such as definable type, heir, coheir, .... The book \cite{Pzt-book} is a possible source.

We will be distinguishing in the current paper, between definable and interpretable. So by a definable set in the saturated structure $\M$ we mean a subset of $\M^{n}$ defined by a formula with parameters.

\subsection{Topological dynamics}

Our reference for (abstract) topological dynamics is \cite{Auslander}.

Given a (Hausdorff) topological group $G$, by a $G$-flow we mean a continuous action $G\times X\to X$ of $G$ on a compact (Hausdorff) topological space $X$. We sometimes write the flow as $(X,G)$. Often it is assumed that there is a dense orbit,
and a $G$-flow $(X,G)$ with a distinguished point $x\in X$ whose orbit is dense is called a 
$G$-ambit (although we will not make much use of this notation).

In spite of $p$-adic algebraic groups being nondiscrete topological groups, we will be treating them as discrete groups so as to have their actions on type spaces being continuous. So in this background section we may assume $G$ to be a discrete group, in which case  a $G$-flow is simply an action of $G$ by homeomorphisms on a compact space $X$.

By a subflow of $(X,G)$ we mean a nonempty closed $G$-invariant nonempty subspace $Y$ of $X$ (together with the action of $G$ on $Y$). $(X,G)$ will always have  minimal nonempty subflows. A point $x\in X$ is almost periodic if the closure of its orbit is a minimal subflow.

Let $(X,G)$ and $(Y,G)$ be flows (with the same acting group). A
homomorphism from $X$ to $Y$ is a continuous map $f: X\longrightarrow Y$  such that $f(gx)=gf(x)$ for all $g\in G$ and $x\in X$.

\begin{Fact}\label{homomorphism of flows}
Let $(X,G)$ and $(Y,G)$ be flows, $f: X\longrightarrow Y$ a homomorphism. Then
\begin{enumerate}
 \item [(i)] If $X_0$ is a minimal subflow of $X$, then $Y_0=f(X_0)$ is a minimal subflow of Y.
 \item  [(ii)] If $Y$ is a minimal flow, then $f$ is onto.
\end{enumerate}
\end{Fact}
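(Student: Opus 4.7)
The plan is a direct argument from the definitions, using only three ingredients: compactness of $X$ (and of its closed subsets), the Hausdorff property of $Y$, and the continuity and $G$-equivariance of $f$. Nothing deeper (no Zorn, no Ellis semigroup machinery) is required; this is the kind of elementary bookkeeping one usually records without comment, but I will spell out both parts.

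For (i), I first check that $Y_0 = f(X_0)$ actually is a subflow. Since $X_0$ is closed in the compact space $X$ it is compact, so its continuous image $Y_0$ is compact, and hence closed in the Hausdorff space $Y$. $G$-invariance is immediate from equivariance: for $g \in G$ and $x \in X_0$, one has $g \cdot f(x) = f(g \cdot x) \in f(X_0) = Y_0$. For minimality, let $Z \subseteq Y_0$ be an arbitrary nonempty closed $G$-invariant subset of $Y_0$. Consider $W := f^{-1}(Z) \cap X_0$. Then $W$ is closed in $X_0$ (as the preimage of a closed set under a continuous map, intersected with a closed set), it is $G$-invariant (since both $f^{-1}(Z)$ and $X_0$ are), and it is nonempty because $Z \subseteq f(X_0)$ means any $y \in Z$ has a preimage in $X_0$. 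By minimality of $X_0$ we conclude $W = X_0$, and therefore $Y_0 = f(X_0) = f(W) \subseteq Z$, forcing $Z = Y_0$. This is the key step.

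For (ii), I would simply observe that $f(X)$ is a nonempty $G$-invariant subset of $Y$, and closed because it is the continuous image of the compact space $X$ in the Hausdorff space $Y$. Minimality of $Y$ then gives $f(X) = Y$, i.e.\ $f$ is surjective. One can also deduce (ii) from (i) by picking a minimal subflow $X_0 \subseteq X$ (which exists by a standard Zorn argument on closed $G$-invariant subsets of the compact $X$), applying (i) to get that $f(X_0)$ is a nonempty subflow of the minimal flow $Y$, whence $f(X_0) = Y$ and a fortiori $f(X) = Y$.

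There is no real obstacle here; the only point requiring a tiny bit of care is to state minimality in (i) relative to the correct ambient space, namely $Y_0$ itself, and to make sure that the nonemptiness of $W$ uses $Z \subseteq f(X_0)$ rather than the a priori weaker $Z \subseteq Y$.
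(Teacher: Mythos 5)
Your proof is correct and is the standard elementary argument (compact image in a Hausdorff space is closed, plus equivariance and a pullback of a putative proper subflow); the paper itself states this as a Fact without proof, citing Auslander, and the argument there is the same. Nothing further is needed.
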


\subsection{Definably amenable groups}

Let $T$ be any first-order theory,  $\M\models T$ a very saturated model, and $G$  a group $\emptyset$-definable in $\M$ defined by the formula $G(x)$. For any $M\prec \M$, $G(M)=\{g\in M^n| g\in G\}$ is a subgroup of $G$. By $S_G(M)$  we mean the space of all complete types over $M$ concentrating on $G(x)$.
It is easy to see that $S_G(M)$ together with the type of the identity element is a $G(M)$-ambit, taking the  dense orbit to be $\{\tp(g/M)|\ g\in G(M)\}$. From now on, we will, throughout this paper, assume that every formula $\phi(x)\in L_\M$, with parameters in $\M$, is contained in $G(x)$, namely, the subset $\phi(\M)$ defined by $\phi$ is contained in $G$. Suppose that $\phi\in L_M$   and $g\in G(M)$, then the left translate $g\phi(x)$ is defined to be $\phi(g^{-1}x)$. It is easy to check that $(g\phi)(M)=gX$ with $X=\phi(M)$.
\begin{Def}
Let the notation be as above.
\begin{itemize}
\item A definable subset  $X\subseteq G$ is generic if finitely many left translates of $X$ cover $G$. Namely, there are $g_1,...,g_n\in G$ such that $G=\cup_{_{1\leq i\leq n}} g_iX$.
\item A definable subset  $X\subseteq G$ is weakly generic if there is a non-generic definable subset $Y\sq G$ such that $X\cup Y$ is generic.
\item A definable subset  $X\subseteq G$ is $f$-generic if for some/any model $M$ over which $X$ is defined and any $g\in G$, $gX$ does not divide over $M$. Namely, for any $M$-indiscernible sequence $(g_i:i< \omega)$, with $g=g_0$, $\{g_iX: i<\omega\}$ is consistent.
\item A formula $\phi(x)$ is generic if the definable set $\phi(\M)$ is generic. Similarly for weakly generic and $f$-generic formulas.
\item A type $p\in S_G(M)$ is generic if every formula  $\phi(x)\in p$ is generic. Similarly for weakly generic and $f$-generic types.
\item A type $p\in S_G(M)$ is almost periodic if $p$ is an almost periodic point of the $G(M)$-flow $S_G(M)$.
\item A global type $p\in S_G(\M)$ is strongly $f$-generic over a small model $M$ if every left $G(\M)$-translate of $p$ does not fork over $M$. A global type $p\in S_G(\M)$ is strongly $f$-generic if it is strongly $f$-generic over some small model.
\end{itemize}
\end{Def}

\begin{Fact}\cite{Newelski}
Let $AP\sq S_G(M)$ be the space of almost periodic types, and $WG\sq S_G(M)$ the space of weakly generic types. Then $WG=\cl(AP)$.
\end{Fact}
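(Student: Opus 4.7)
I would prove both inclusions separately, based on the following auxiliary characterization that I would establish first as a lemma: a definable subset $\phi$ of $G$ is generic if and only if the clopen $[\phi]\sq S_G(M)$ meets every minimal subflow of $S_G(M)$. The forward direction is immediate from unwinding the definition of generic (finitely many left translates of $[\phi]$ cover the flow) together with $G$-invariance of minimal subflows. For the converse, if no finite family of translates $g_1\phi,\ldots,g_n\phi$ covers $G$, then $\{\neg g\phi:g\in G(M)\}$ is finitely consistent, extends to some $q\in S_G(M)$, and a short check gives $G\cdot q\sq[\neg\phi]$, so $\overline{G\cdot q}\sq[\neg\phi]$; any minimal subflow inside this orbit closure avoids $[\phi]$.

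For the inclusion $\cl(AP)\sq WG$, I would first verify that $WG$ is closed: its complement is the union, over the non-weakly-generic formulas $\phi$, of the clopen sets $[\phi]$. So it suffices to show $AP\sq WG$. Given an almost periodic $p$ with orbit closure $Y=\overline{G\cdot p}$ a minimal subflow, and $\phi\in p$, the set $[\phi]\cap Y$ is open and nonempty in $Y$, so minimality plus compactness yield $g_1,\ldots,g_n\in G(M)$ with $Y\sq[g_1\phi\vee\cdots\vee g_n\phi]$. Writing $\theta$ for this disjunction, $Y$ witnesses via the lemma that $\neg\theta$ is not generic, while $\theta\vee\neg\theta$ trivially is, so $\theta$ is weakly generic. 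Two routine observations close out this direction: weak genericity is invariant under left translation of the witness, and it passes to disjuncts (if $\psi_1\vee\psi_2$ is weakly generic with non-generic witness $\chi$, then either $\psi_2\vee\chi$ is non-generic, making $\psi_1$ weakly generic, or it is generic, making $\psi_2$ so). Applied iteratively to $\theta$, these give that $\phi$ itself is weakly generic.

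For $WG\sq\cl(AP)$, suppose $p\in WG\setminus\cl(AP)$. Then some clopen $[\phi]$ contains $p$ and is disjoint from $AP$, so $\phi\in p$ is weakly generic, and no almost periodic type contains $\phi$. Pick a non-generic $\chi$ with $\phi\vee\chi$ generic. By the lemma, some minimal subflow $Y$ avoids $[\chi]$; by genericity of $\phi\vee\chi$, the same $Y$ meets $[\phi\vee\chi]$, hence $Y\cap[\phi]\neq\emptyset$. Any type in this intersection is almost periodic and contains $\phi$, contradicting $[\phi]\cap AP=\emptyset$.

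The main conceptual obstacle is the auxiliary lemma connecting generic formulas to intersection with minimal subflows; once it is in place, both inclusions follow from short compactness-plus-minimality arguments on the flow $S_G(M)$. The remaining technical points — closedness of $WG$, translation invariance of weak genericity, and the disjunction observation — are direct consequences of the definitions and require no further machinery.
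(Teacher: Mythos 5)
The paper states this only as a cited fact from Newelski and gives no proof, so there is nothing internal to compare against; your argument is correct and is essentially Newelski's original one, resting on the characterization that a definable set is generic if and only if its clopen set meets every minimal subflow. All the supporting steps (closedness of $WG$, translation invariance of weak genericity, the disjunction splitting, and the reduction of the $g_i$ to $G(M)$ by elementarity) check out.
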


Recall that a type definable over $A$ subgroup $H\leq G$ has bounded index if $|G/H|<2^{|T|+|A|}$. For groups definable in $NIP$ structures, the smallest type-definable subgroup $G^{00}$ exists (see \cite{$NIP$}). Namely, the intersection of all type-definable subgroups of bounded index still has bounded index. We call $G^{00}$ the type-definable connected component of $G$. Another model theoretic invariant is $G^0$, called the definable-connected component of $G$,  which is the intersection of all definable subgroups of $G$ of finite index. Clearly, $G^{00}\leq G^0$.

Recall also that a  Keisler measure over $M$ on $X$,  with $X$ a definable subset of $M^n$, is a finitely additive probability measure on the Boolean algebra of $M$-definable subsets of $X$. When we take the monster model, i.e. $M=\M$, we call it a global Keisler measure. A definable group $G$ is said to be definably amenable if it admits a global (left) $G$-invariant Keisler measure.

The following facts will be used later.
\begin{Fact}\cite{CS}\label{amenable}
Let $G$ be a group definable in a saturated  $NIP$ structure $\M$.
 Then
\begin{enumerate}
  \item [(i)] $G$ is definably amenable if and only if it admits a global type $p\in S_G(\M)$ with
 bounded $G$-orbit.
  \item  [(ii)]  $G$ is definably amenable if and only if it admits a strongly $f$-generic type.
\end{enumerate}
\end{Fact}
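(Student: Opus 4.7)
The plan is to link three notions---definable amenability, existence of a global type with bounded $G$-orbit, and existence of a strongly $f$-generic type---via the intermediate concept of $f$-generic formulas, using $NIP$ in an essential way.

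For (i), in the $(\Leftarrow)$ direction I would argue as follows: starting from $p \in S_G(\M)$ with bounded $G$-orbit, the closure $\overline{G\cdot p}\sq S_G(\M)$ is a compact $G$-flow on a set of bounded cardinality. Averaging by an invariant mean on the resulting bounded image group (or by a suitable ultrafilter limit of finite averages along translates) produces a $G$-invariant Borel probability on $\overline{G\cdot p}$, which pushes forward to a $G$-invariant global Keisler measure on $G$. For the $(\Rightarrow)$ direction, given a $G$-invariant Keisler measure $\mu$, observe that any formula of positive $\mu$-measure cannot divide, since its $G$-translates all have the same positive measure and are therefore consistent on every finite subset. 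By $NIP$ compactness one then extends the partial type of $f$-generic formulas to a global $f$-generic $p$, and in $NIP$ every $f$-generic type is $G^{00}$-invariant, hence has bounded $G$-orbit.

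For (ii), the $(\Leftarrow)$ direction is immediate: a strongly $f$-generic type over $M$ has every $G(\M)$-translate $M$-invariant, and there are boundedly many $M$-invariant global types. For $(\Rightarrow)$, I would take the type $p$ with bounded $G$-orbit produced in (i) and select, by a L\"owenheim--Skolem argument, a small model $M$ containing the parameters needed to witness invariance of each of the boundedly many distinct translates $g\cdot p$; this makes every translate $M$-invariant, so $p$ is strongly $f$-generic over $M$.

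The main obstacle I expect is the $(\Rightarrow)$ direction of (i): manufacturing a \emph{single} global type with bounded orbit from a measure. The crux is that bounded $G$-orbit is a global property of a type while non-forking of a formula is local; bridging them requires the $NIP$ compactness result that every non-forking partial type extends to a global non-forking type, together with the $NIP$-specific equivalence between ``non-forking of every translate over a small model'' and ``$G^{00}$-invariance'', which is what converts local $f$-genericity into a global boundedness statement.
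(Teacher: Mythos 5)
This statement is quoted as a Fact from \cite{CS}; the paper gives no proof of it, so there is nothing in the text to compare your argument against, and it must be judged on its own. Your sketch has the right global shape (the three conditions are linked in a cycle through $f$-genericity), but it hides the two genuinely hard points of the Chernikov--Simon theorem inside throwaway phrases. First, in (ii)$(\Rightarrow)$ you claim that a type $p$ with bounded $G$-orbit becomes strongly $f$-generic after a L\"owenheim--Skolem choice of a small model ``containing the parameters needed to witness invariance of each translate.'' But bounded orbit is a statement about the action of $G(\M)$ on $S_G(\M)$, not about the action of $\mathrm{Aut}(\M/M)$; a global type carries no a priori small set of ``parameters witnessing invariance,'' and the passage from bounded $G$-orbit to $M$-invariance (equivalently, $G^{00}$-invariance) of $p$ and its translates is precisely the deepest implication of the theorem, requiring a substantive NIP argument about $\mathrm{Stab}(p)$ rather than a cardinality count. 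Second, in (i)$(\Leftarrow)$ the proposed ``invariant mean on the resulting bounded image group, or ultrafilter limit of finite averages'' need not exist: a group of bounded cardinality is not in general amenable as an abstract group, and ultrafilter averaging only produces an invariant limit when the acting group is amenable---which is what you are trying to prove. The correct construction first establishes that $p$ is $G^{00}$-invariant (the hard step above), so that $g\mapsto gp$ factors through the compact Hausdorff group $G/G^{00}$, and then pushes forward its Haar measure, using NIP (Borel definability of invariant types) to see that $\{\bar g\in G/G^{00}:\phi\in gp\}$ is Haar-measurable.

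A smaller but real inaccuracy occurs in (i)$(\Rightarrow)$: ``the translates all have the same positive measure and are therefore consistent on every finite subset'' is not a valid inference, since two sets of measure $1/2$ can already be disjoint. What is true is that an infinite family of sets each of measure at least $\epsilon$ cannot be $k$-inconsistent for $k>1/\epsilon$, by the counting bound $n\epsilon\leq k-1$ when every point lies in at most $k-1$ of the sets; combined with indiscernibility this rules out dividing. Moreover, to apply this to automorphism-conjugates of the parameters (which is what dividing over $M$ requires) you need $\mu$ to be $\mathrm{Aut}(\M/M)$-invariant for some small $M$, not merely $G$-invariant, and arranging that is an additional NIP step your sketch does not address.
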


Moreover,
\begin{Fact}\cite{CS}\label{f-generic-type-has-bounded-orbit}
Let $G$ be a  definably amenable group definable in a saturated  $NIP$ structure $\M$. Then
\begin{enumerate}
\item [(i)] $G$ admits a strongly $f$-generic type $p\in S_G(\M)$.
\item [(ii)] Weakly generic definable subsets, formulas, and types coincide with $f$-generic definable subsets, formulas, and types, respectively
\item [(iii)] $p\in S_G(\M)$ is $f$-generic if and only if it has bounded $G$-orbit.
\item [(iv)]  $p\in S_G(\M)$ is $f$-generic if and only if it is $G^{00}$-invariant.
\item [(v)]  The type-definable subgroup $H$ fixing a global $f$-generic type is exactly $G^{00}$.
\item [(vi)]  $G/G^{00}$ is isomorphic to the Ellis  group of $S_G(M)$ for any   $M\prec \M$.
\end{enumerate}
\end{Fact}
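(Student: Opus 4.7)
My plan is to derive this theorem from Theorem~\ref{Main theorem 1} (the structure theorem) and the recent result of \cite{Y-Trigonalizable} that every global $f$-generic type of a definable trigonalizable group over $\Q$ is almost periodic. The core task is to propagate the property ``every $f$-generic is almost periodic'' from the trigonalizable algebraic group produced by Theorem~\ref{Main theorem 1} back to the given $dfg$ group $G$.

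\textbf{Setup and reductions.} Let $p \in S_G(\M)$ be an $f$-generic type. By Theorem~\ref{Main theorem 1}, $G$ admits a finite-index subgroup $H := G_n$ and a finite normal $G_0 \vartriangleleft H$ such that $H/G_0$ is definably isomorphic to a finite-index subgroup of a trigonalizable algebraic group $\mathbf T$ over $\Q$. Since $[G:H]$ is finite, $p$ concentrates on some coset $aH$ and $q := a^{-1} p \in S_H(\M)$. Using $H^{00} = G^{00}$ and Fact~\ref{f-generic-type-has-bounded-orbit}, $q$ is $f$-generic in $H$; its pushforwards into $H/G_0$ and then into $\mathbf T$ are $f$-generic there. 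By \cite{Y-Trigonalizable}, the pushforward to $S_{\mathbf T}(\M)$ is almost periodic.

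\textbf{Transfers and conclusion.} Three transfers need to go in the reverse direction: (a) from $\mathbf T$ down to its finite-index subgroup $H/G_0$; (b) from $H/G_0$ up to $H$ through the finite normal quotient; and (c) from $H$ up to $G$ through the finite-index inclusion. Transfers (a) and (c) are straightforward correspondences via the equivariant projection to the finite coset space: a minimal subflow downstairs pulls back to a disjoint union of finitely many translates of a minimal subflow upstairs, and $p$ lies in such a minimal subflow iff $q$ does. Transfer (b) uses the $H$-equivariant continuous surjection $\pi_*\colon S_H(\M) \to S_{H/G_0}(\M)$ (Fact~\ref{homomorphism of flows}), whose fibers are finite because $G_0$ is finite. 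The \emph{main obstacle} is this last lifting: one must argue that the $H$-orbit closure $\overline{H\cdot q}$, a finite cover of the minimal base $\overline{(H/G_0)\cdot \pi_*(q)}$, is itself $H$-minimal, ruling out a splitting into several disjoint minimal subflows above the same base. This is where the $dfg$-hypothesis must be used in an essential way, e.g. via the Ellis group structure guaranteed by Fact~\ref{f-generic-type-has-bounded-orbit}(vi) and the uniqueness properties of the definable $f$-generic type across a finite extension by $G_0$. Once this lifting is in hand, chaining (a)--(c) together with the cited trigonalizable result yields that $p$ is almost periodic in $S_G(\M)$, completing the proof.
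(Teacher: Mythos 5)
Your proposal does not address the statement it is meant to prove. The statement in question is Fact~\ref{f-generic-type-has-bounded-orbit}, a collection of general results about groups definable in an arbitrary saturated NIP structure $\M$ (existence of strongly $f$-generic types for definably amenable groups, the identification of weakly generic with $f$-generic objects, the characterizations of $f$-generic types via bounded orbits and $G^{00}$-invariance, and the isomorphism of $G/G^{00}$ with the Ellis group). In the paper this is a quoted fact from \cite{CS} (Chernikov--Simon), with no proof given; any proof would have to run through the machinery of that paper (invariant Keisler measures, forking and $G^{00}$ in NIP theories, the analysis of the Ellis semigroup of $S_G(M)$), none of which appears in your argument. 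What you have written instead is a sketch of the paper's \emph{final} theorem, that every $f$-generic type of a $dfg$ group over $\Q$ is almost periodic. That is a different statement, proved in the paper from Proposition~\ref{open dfg subgroup of alg groups} together with Fact~\ref{trigonalizable algebraic group almost periodic}.

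Beyond the mismatch, the approach could not work for the actual statement even in principle: Fact~\ref{f-generic-type-has-bounded-orbit} concerns an arbitrary NIP structure, whereas Theorem~\ref{Main theorem 1} and the result of \cite{Y-Trigonalizable} are specific to $\Th(\Q)$, so no amount of transferring almost periodicity across finite-index subgroups and finite kernels can yield a statement about general NIP groups. Your proposal is also circular as written, since both the setup (to conclude that $q$ is $f$-generic from $H^{00}=G^{00}$) and the ``main obstacle'' step explicitly invoke Fact~\ref{f-generic-type-has-bounded-orbit}, i.e.\ the very statement under consideration. Finally, even read as an attempt at the almost-periodicity theorem, the key lifting step in your transfer (b) --- showing that a finite equivariant cover of a minimal flow is itself minimal --- is exactly the point you leave open, so the argument is incomplete on its own terms as well.
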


Note that in the context of Fact 1.6, if $p$ is global $f$-generic type which is either definable over, or finitely satisfiable in, a small model, then $p$ will be strongly $f$-generic (because of parts (iii) or (iv)).

\begin{Rmk}
Fact \ref{f-generic-type-has-bounded-orbit} (vi) gives a positive answer to the so-called Ellis group conjecture. The equality between $G/G^{00}$ and the Ellis group, under possibly additional assumptions, was suggested first  by Newelski \cite{Newelski} in Question 5.5 and Problem 5.6. A precise formulation under an assumption of $NIP$ and definable amenability was made by Pillay \cite{Pillay-TD}.  This  makes it reasonable to consider definably amenable  groups  as the ``stable-like" groups in $NIP$ environment.
\end{Rmk}

\subsection
{Groups definable in $(\Q,+,\times, 0,1)$}

We will be referring a lot to the comprehensive survey in \cite{Luc Belair} for the basic model theory of the p-adic field $\Q$. A key
point is Macintyre's theorem \cite{Macintyre} that $\Th(\Q,+,\times, 0,1)$ has quantifier elimination in the language where we add predicates $P_n(x)$ for the $n$-th powers for each $n\in \N^+$. The valuation is quantifier-free definable in this expanded language, in particular is definable
in the language of rings (see Section 3.2 of \cite{Luc Belair}).  Let us give some notation we will be using. $M$ denotes the structure $(\Q,+,\times, 0,1)$, $\Q^*=\Q\backslash \{0\}$ is the multiplicative group and $\mathbb Z$ is the ordered additive group of integers, the value group
of $\Q$. The group homomorphism $\nu: \Q^*\longrightarrow \mathbb Z$ is the valuation map.
The valuation map $\nu$ induces an absolute valuation $|\ \ |$ on $\Q$: for each $x\in \Q$, $|x|=p^{-\nu(x)}$ if $x\neq 0$ and $|x|=0$ otherwise. The absolute valuation makes the $p$-adic field $\Q$  into a locally compact topological field, with basis given by the sets $\nu(x-a)\geq n$ for $a\in \Q$ and $n\in \mathbb Z$. The ring of $p$-adic integers (valuation ring)
\[\Z=\{x\in \Q|\ x=0 \vee \ \nu(x)\geq 0\}
\] is of course
 a definable subset of $\Q$. We can add new predicates $\V(x)$ for the  valuation ring and $\V^{\times}(x)$ for the elements  with valuation $0$.  So $\V(M)=\V(\Q)=\Z$ and $\V^{\times}(M)=\V^{\times}(\Q)=\{x\in \Q| \nu(x)=0\}$.

 It is convenient to  refer to Section 3 of \cite{Pillay-On fields definable in Qp} for various notions such as definable $p$-adic analytic manifold and definable $p$-adic analytic group. (One can also consult \cite{O-P}.) In the background are the notions of dimension of definable (with parameters) sets in the structure $M = (\Q,+,\times, 0,1)$ as well as theorems on $p$-adic cell decomposition and definable functions, which are due to van den Dries and Scowcroft \cite{vdD-Scowcroft}. But we mention here a few details: For $X$ a definable set in $M$, say $X\subseteq \Q^{n}$, $dim(X)$ is the greatest $k\leq n$ such that some projection on $\Q^{k}$ contains an open set. An $n$-dimensional definable $p$-adic analytic manifold is a topological space with a covering by finitely many open sets each homeomorphic to an open definable (in $M$) subset of $\Q^{n}$ such that the transition maps  are definable and analytic.  See \cite{Serre} for analytic functions over complete fields. A definable $p$-adic analytic group is a definable $p$-adic analytic manifold equipped with a group structure which is definable and analytic when read in the appropriate charts. Such a definable $p$-adic analytic group is a definable group in $M$ and Lemma 3.8 of \cite{Pillay-On fields definable in Qp} says that conversely any group $G$ definable in $M$ can be definably equipped with the structure of a definable $p$-adic analytic group.

 In the current paper we will really be concerned with groups $G$ definable in the structure $M = (\Q,+,\times, 0,1)$.  However, the various notions introduced in the previous section depend on an ambient saturated model. So we let $\M$ denote a very saturated elementary extension $(\K, +, \times, 0, 1)$ of $M$. We also let $N=(K,+,\times, 0)\prec \M$ be an elementary extension of $M$.  So $K$ and $\K$ are $p$-adically closed fields.  The dimension of definable sets is defined as before, but can also be defined in terms of the underlying algebraic closure relation, which coincides with field-theoretic algebraic closure: assuming $X$ is definable over a finite set $A$, then $dim(X)$ can be described as the maximum of $dim({\bar a}/A)$ as ${\bar a}$ ranges over points of $X$.  We have the notion of a ``definable $C^{k}$-manifold in $N$ (or over $K$)", a topological space with a finite covering by open sets each homeomorphic to an open definable subset of $K^{n}$ (fixed $n$) with transition maps definable and $C^{k}$. Either adapting the methods of \cite{Pillay-On fields definable in Qp} or by transfer from the case $K = \Q$, one sees that for any group $G$ definable in $N$ and for any $k<\omega$, $G$ can be definably equipped with the structure of a definable $C^{k}$-manifold in $N$ with respect to which the group structure is $C^{k}$.  In contrast with the analogous situation for real closed fields, it seems not to be known whether we can also do it for $k = \infty$. On the other hand if $G$ is defined with parameters from $\Q$, then $G(\Q)$ is as in the previous paragraph, and so the definable analytic manifold and group structure on $G(\Q)$ DOES give rise to a definable $C^{\infty}$-structure on $G$ with respect to which the group operation is $C^{\infty}$.

Recall from \cite{O-P} that a definable manifold $X$ over $K$ is definably compact, if for any definable continuous function $f : \V(K) \backslash\{0\} \longrightarrow X$, the limit $\displaystyle\lim_{x \to 0}f(x)$  exists in X. Definable compactness agrees with compactness if $K$ is $\Q$. A definable subset of $K^n$ with
the induced topology  is definably compact if and only if it is closed and bounded.

Let $G$ be a group definable in $N$ with parameters from $\Q$.  As remarked above $G(\Q)$ has (definably in $M$) the structure of a $p$-adic analytic (Lie) group, and as such has an open compact subgroup, which must be definable in $M$. It follows that $G$ has a $\Q$-definable subgroup $C$ with $dim(C) = dim(G)$ and $C(\Q)$ is compact (and $C$ definably compact).

Let $C$ be a group definable in $N$ over $\Q$ which is (with its definable manifold topology) definably compact. Then $C(\Q)$ is compact.  We have the standard part map $\st: C\longrightarrow C(\Q)$. The  kernel  of $\st$ is precisely the group of infinitesimals of $C$ and coincides with the intersection of all $\Q$-definable subgroups of $C$ of finite index.
This is elaborated on in the following Fact.

\begin{Fact}\label{fsg-G00=G0}\label{fsg=compact}
Let $C$ be a group definable in $\M$ over $\Q$.
\begin{enumerate}
  \item [(i)] If $C$  definably compact, then $C^{00}=C^0$ coincides with $\Ker(\st)$, and $\st$ induces a homeomorphism
between $C/C^{0}$ (with its logic topology) and the p-adic Lie group $C(\Q)$.
  \item  [(ii)] $C$ is definably compact iff $C$ has $fsg$.
\end{enumerate}
\end{Fact}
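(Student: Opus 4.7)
The plan is to address the two parts separately, starting with (ii) since its analysis underlies (i). For the easy direction \emph{$fsg$ implies definably compact} I would argue contrapositively: if $C$ is not definably compact, definable choice in Macintyre's language yields a definable map from $\V(\K)\setminus\{0\}$ into $C$ whose limit fails to exist in $C$, producing definable elements whose standard parts escape $C(\Q)$. A global $fsg$ type $p$ must have all $C(\K)$-translates finitely satisfiable in $\Q$; but translating $p$ by such an escaping element forces a formula with no realization in $C(\Q)$, contradicting finite satisfiability.

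For the harder direction \emph{definably compact implies $fsg$}, I would transport Haar measure. Since $C(\Q)$ is a compact $p$-adic Lie group it carries a unique left-invariant probability Haar measure $\mu$. Define a global Keisler measure $\bar\mu$ on $C$ by $\bar\mu(\phi(x))=\mu(\st(\phi(\K)\cap C))$, first checking via cell decomposition that $\st(\phi(\K)\cap C)$ is Borel in $C(\Q)$. Finite additivity, left $C(\K)$-invariance, and the probability axiom then follow from the corresponding properties of $\mu$ together with the fact that $\st$ is a group homomorphism. The crucial step is that any $\bar\mu$-positive definable $X\sq C$ is generic: $\st(X)$ has positive Haar measure, so finitely many left translates cover $C(\Q)$, and pulling these translates back via $\st$ and invoking saturation of $\K$ shows they cover $C$ as well. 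A global type built from the $\bar\mu$-positive formulas then has all translates finitely satisfiable in $\Q$, since every nonempty $\Q$-open definable piece meets $C(\Q)$; this gives $fsg$.

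For (i), the subgroup $\Ker(\st)$ is type-definable over $\Q$ as the group of infinitesimals, and $C/\Ker(\st)$ is in bijection with $C(\Q)$, hence of bounded size; so $\Ker(\st)$ has bounded index and $C^{00}\leq\Ker(\st)$. Conversely, any $\Q$-definable finite-index subgroup $H\leq C$ has $H(\Q)$ a closed finite index subgroup of the compact Polish group $C(\Q)$; a standard Baire argument gives $H(\Q)$ open, whence $H\supseteq\Ker(\st)$. Intersecting yields $C^0\supseteq\Ker(\st)$, and the chain $C^{00}\leq\Ker(\st)\leq C^0\leq C^{00}$ forces equality throughout. The induced continuous bijection $C/C^{00}\to C(\Q)$ between compact Hausdorff spaces (logic topology versus $p$-adic topology) is automatically a homeomorphism. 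The main obstacle is the Keisler measure construction in (ii): verifying Borelness of standard-part images, $C(\K)$-invariance, and especially the passage from ``positive Haar measure'' to ``finitely many translates cover $C$'' all rest on a careful interplay between saturation of $\K$ and $p$-adic compactness of $C(\Q)$.
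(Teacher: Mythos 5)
This statement is quoted in the paper as a Fact imported from Onshuus--Pillay \cite{O-P}; the paper itself contains no proof of it, so there is no internal argument to compare yours against and it must be judged on its own. Your overall strategy --- transporting Haar measure through $\st$ for ``definably compact $\Rightarrow$ $fsg$'', an escaping curve for the converse, Baire category for (i) --- is indeed the strategy of \cite{O-P}, but several of the steps you lean on do not work as written.

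In (i), the chain $C^{00}\leq \Ker(\st)\leq C^0\leq C^{00}$ smuggles in the inclusion $C^0\leq C^{00}$, which is the reverse of the trivial inclusion $C^{00}\leq C^0$ and is precisely the nontrivial, $p$-adic--specific content of the theorem: it fails for definably compact groups in the $o$-minimal setting, where $C^0=C$ for connected $C$ while $C^{00}=\Ker(\st)$ is proper. What your Baire argument actually yields is $\Ker(\st)\sq H$ for every $\Q$-definable finite-index $H$, hence $C^{00}\sq C^0\sq \Ker(\st)$ together with $C^{00}\sq\Ker(\st)$; the missing, hard inclusion is $\Ker(\st)\sq C^{00}$, i.e.\ that \emph{every} type-definable bounded-index subgroup contains the infinitesimals, and nothing in your sketch addresses it. In (ii), the step ``$\bar\mu$-positive implies generic'' is broken: from $C(\Q)=\bigcup_i g_i\st(X)$ you can only pull back to $C=\bigcup_i g_i\,\st^{-1}(\st(X))$, and $\st^{-1}(\st(X))$ properly contains $X$, so you do not obtain that finitely many translates of $X$ cover $C$. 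More importantly, genericity is not what $fsg$ needs; you need that every $\K$-definable set of positive $\bar\mu$-measure meets $C(\Q)$, and your justification (``every nonempty $\Q$-open definable piece meets $C(\Q)$'') applies only to $\Q$-definable sets. The essential lemma is that a definable $X\sq C$ with $\mu(\st(X))>0$ contains a ball of \emph{standard} radius centred at a point of $\V(\K)^n$ and hence contains a standard point; this rests on $p$-adic cell decomposition and is where the real work sits. Finally, in ``$fsg\Rightarrow$ definably compact'' you never exhibit the formula in a translate of $p$ that misses $C(\Q)$; producing one requires, for instance, locating $p$ relative to the cosets of a compact open definable subgroup and using that the stabilizer of $p$ is $C^{00}$ of bounded index --- substantially more than ``translating by an escaping element''.
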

\begin{proof} For (i) see Corollary 2.4 of \cite{O-P}.
\newline
The right implies left direction of (ii) is Corollary 2.3 (iv) of \cite{O-P}. The left to right direction (easy direction) does not appear explicitly in \cite{O-P}, but appears in somewhat greater generality in Proposition 3.1 of \cite{Johnson-fsg}

\end{proof}

\begin{Fact}\cite{P-Y2}\label{one-dim-definably-amenable}
If $G$ is a one-dimensional group definable over $\Q$, then $G$ is abelian-by-finite, hence definably amenable.
\end{Fact}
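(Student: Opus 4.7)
The plan is to exhibit a $\Q$-definable normal abelian subgroup of $G$ of finite index; amenability will then follow since an invariant mean on an abelian normal subgroup can be averaged over a finite quotient to produce an invariant Keisler measure on $G$.

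By Pillay's theorem on groups definable in $\Q$, $G(\Q)$ admits a $p$-adic analytic Lie group structure of dimension $1$, and its Lie algebra $\mathfrak{g}$ is therefore (trivially) abelian. Applying Baker--Campbell--Hausdorff to a small ball in $\mathfrak{g}$ yields an open compact abelian subgroup $U \le G(\Q)$; by the fact recalled in the paper that open compact subgroups of definable $p$-adic Lie groups are automatically $\Q$-definable, we may take $U$ to be $\Q$-definable. Differentiating conjugation at the identity produces a $\Q$-definable adjoint homomorphism $\mathrm{Ad}\colon G \to \mathrm{GL}(\mathfrak{g}) \cong \Gm$, whose kernel contains $U$ and is therefore already $1$-dimensional. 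Since $\dim G = 1$, the image $\mathrm{Ad}(G)$ is zero-dimensional, hence finite (any zero-dimensional $\Q$-definable subset of $\Q$ is finite). Thus $K := \ker(\mathrm{Ad})$ is a $\Q$-definable normal subgroup of $G$ with $[G:K] < \infty$.

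To conclude that $K$ is abelian-by-finite, observe that for each $k \in K$ the inner automorphism $c_k$ has trivial differential at the identity, so via the exponential map it fixes an open neighborhood of the identity; after shrinking $U$ uniformly one arranges that every $k \in K$ centralises $U$, giving $U \subseteq Z(K)$. Hence $Z(K)$ is a $\Q$-definable open $1$-dimensional subgroup of $K$, and the quotient $K/Z(K)$ is zero-dimensional and $\Q$-definable, embedded into the $\Q$-definable automorphism group of $K$. Reapplying a dimension/finite-image argument analogous to the one used for $\mathrm{Ad}(G)$ forces $K/Z(K)$ to be finite. Composing the finite indices $[G:K]$ and $[K:Z(K)]$, the centre $Z(K)$ is a $\Q$-definable abelian subgroup of $G$ of finite index, proving $G$ is abelian-by-finite and therefore definably amenable.

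The main technical obstacle is the finiteness of $K/Z(K)$. In the connected real-analytic world this is automatic from connectedness, but in the totally disconnected $p$-adic setting one must track $\Q$-definability of every step, and exploit the fact that trivial differential of each inner automorphism cuts the image of $K \to \mathrm{Aut}_{\mathrm{def}}(K)$ down to a $\Q$-definable zero-dimensional (hence finite) discrete group.
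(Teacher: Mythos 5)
The paper contains no proof of this statement: it is imported verbatim as a Fact from the reference [P-Y2], so your argument can only be measured against what a correct proof must supply. The first half of your proposal is sound: the Lie algebra is one-dimensional hence abelian, the compact open abelian $U$ exists and is $\Q$-definable, $\mathrm{Ad}\colon G\to \GL(\mathfrak g)\cong\Gm$ is a definable homomorphism whose kernel contains the open set $U$, and since $\mathrm{Ad}(G)$ is a zero-dimensional \emph{definable subset of the field sort} it is finite, so $K=\ker(\mathrm{Ad})$ is a $\Q$-definable normal subgroup of finite index.

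The genuine gap is in the final step, and you have in fact put your finger on it without closing it. From ``$Z(K)$ contains an open subgroup'' or ``$K/Z(K)$ is zero-dimensional'' one cannot conclude $[K:Z(K)]<\infty$: in the $p$-adic setting open subgroups of one-dimensional definable groups may have infinite index ($\V^\times\le\Gm$), and zero-dimensional interpretable quotients may be infinite ($\Gm/\V^\times\cong\mathbb Z$, the value group). The inference ``zero-dimensional hence finite'' is valid only for definable subsets of $\K^n$, not for quotients, and $K/Z(K)$ does not obviously live in the home sort; the ``$\Q$-definable automorphism group of $K$'' you invoke is not a definable group of tuples, so embedding into it buys nothing. (This is precisely the subtlety the present paper is organized around: Proposition \ref{dfg-subgroup-has-finite-index} exists because open does not imply finite index, and the general finite-index question is left open.) There is also a smaller unjustified claim, namely that $U$ can be shrunk \emph{uniformly} so that all of $K$ centralizes it; a priori $c_k$ only fixes pointwise an open subgroup $U_k$ depending on $k$. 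Both defects are repaired by working with conjugacy classes rather than with the quotient: for $k\in K$ the centralizer $C_K(k)$ contains $U_k$, hence is infinite and one-dimensional, so the conjugacy class $k^K$ --- which \emph{is} a definable subset of $\K^n$ --- has dimension $\dim(K)-\dim(C_K(k))=0$ and is therefore finite; by orbit--stabilizer $[K:C_K(k)]<\infty$ for every $k$, and by Baldwin--Saxl (using NIP) $Z(K)=\bigcap_{k\in K}C_K(k)$ equals a finite subintersection, hence has finite index in $K$ and in $G$. This produces the required finite-index definable abelian subgroup, after which your averaging argument for definable amenability is fine.
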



\subsection{Definable Groups and Quotient Groups}

We will try to give some coherent notation regarding definable groups and algebraic groups. Our underlying first order theory is $Th(\Q)$ in the ring language (or in the Macintyre language, a definitional expansion). As in Section 1.4, $M$ denotes the standard model, $\M$ a ``monster model", and $N\prec\M$ a not necessarily saturated elementary extension of $M$. When we speak of a group definable {\em in} $M$, $N$ or $\M$, we mean the obvious thing. When we speak of a group $G$ definable {\em over}
$\Q$ (for example) we typically mean a group definable in $\M$ defined with parameters from $\Q$. In this case $G(\Q)$ denotes the group definable in $\Q$ by the same formulas defining $G$ in $\M$.
On the other hand if for example $G$ is a group definable {\em in } $\Q$ then we can consider the groups definable in $N$ or $\M$ by the same formulas, which we may refer to as $G(K)$, $G(\K)$, or $G(N)$, $G(\M)$.
In general, when we speak of a definable object (set, or group) we mean a definable object in the monster model $\M$ of $Th(\Q)$. As mentioned in Section 1.4 we are mainly interested in groups definable in the standard model $M$, but we will need information about their interpretations in $\M$.
Let us again emphasize that we distinguish between definable and interpretable (definable in $(\K)^{eq}$) and our groups will be definable.

The usual mathematical notation for algebraic groups is to consider an algebraic group $G$ over a field $K$ as something like a functor which takes a field $L$ containing $K$ to the group $G(L)$. Alternatively, one can  identify  $G$ with $G(L)$ for a given {\em algebraically closed field} containing $K$, where the variety structure as well as the group structure are given by data (polynomial equations, transition maps, morphisms) over $K$. (See \cite{Pillay-ACF} for more details on this and the later discussion of algebraic varieties and groups over algebraically closed fields.)
With this notation the algebraic groups we will consider will be over $\Q$ and we will consider their groups $G(\Q)$ and $G(\K)$ of $\Q$-points, and $\K$-points. Of course the latter will be also definable groups in the structures $M$, $\M$ respectively, in the earlier sense, but essentially just quantifier-free definable in the ring language.

So in this paper we will slightly modify notation, by defining a  {\em $p$-adic algebraic group} to be the group $G(\Q)$ of $\Q$-points of an algebraic group $G$ over $\Q$.  When $G$ is a linear algebraic group over $\Q$, then $G(\Q)$ will simply be a subgroup of some $GL_{n}(\Q)$ defined by a finite system of polynomial equations over $\Q$. So a $p$-adic algebraic group is a special case of a group definable in $M$. As above we denote by $G(\K)$ the group of $\K$-points of $G$, which is consistent with our earlier notation regarding definable groups.

We now just want to observe that the quotient of a $p$-adic algebraic group by a $p$-adic algebraic subgroup can be seen as a definable (rather than just interpretable) set in $M$: So we are given algebraic groups $G, H$ over $\Q$ such that $H(\Q)$ is a subgroup of $G(\Q)$. We assume $G$ to be connected as an algebraic group. Let $L$ be an algebraically closed field containing $\Q$ such as $\Q^{alg}$. Then $H(L)$ is a subgroup of $G(L)$. But then by elimination of imaginaries in $ACF$ together with a theorem of Weil, $G(L)/H(L)$ is, in the structure $(L,+,\times)$, definably isomorphic over $\Q$ to an algebraic variety $X(L)$ over $\Q$, which is a homogeneous space
for $G(L)$ all defined over $\Q$. Namely we have a surjective morphism $f:G(L) \to X(L)$ defined over $\Q$, constant on cosets of $H(L)$ in $G(L)$ and inducing a bijection between $G(L)/H(L)$ and $X(L)$. Restricting $f$ to $G(\Q)$ gives a definable (in $M$) bijection between $G(\Q)/H(\Q)$ and a subset $Y$ of $X(\Q)$ which is definable in $M$ and can be seen, by dimension reasons, to be an open subset of the $p$-adic manifold $X(\Q)$, so a $p$-adic manifold itself.  In the case where $H(\Q)$ is normal in $G(\Q)$, then $Y$ becomes an open subgroup of the $p$-adic algebraic group $X(\Q)$.

\newpage

\section{Main Results}

\subsection{$fsg$ groups and $dfg$ groups over $\Q$}


\begin{Fact}\label{G-finite-to-one-H}
Let $G$ be a group definable in $\M$ over $\Q$ and suppose $G$ to be definably amenable.  Then there is an algebraic group $H$ over  $\Q$ and a finite-to-one $\Q$-definable group homomorphism from  $G^{00}$ to $H(\K)$.
\end{Fact}
\noindent
{\em Explanation.}  This follows from Theorem 2.19 of \cite{MOS}, making use of the proof of Corollary 2.22 of \cite{MOS}.



\begin{Lemma}\label{f-to-1}
Let $G$ be a definably amenable group over $\Q$ such that $G^{00}=G^{0}$. Then there is a  $\Q$-definable  subgroup $A\leq G(\Q)$ of finite index and a finite normal subgroup $A_0\sq A$ such that  $A/A_0$ is $\Q$-definably isomorphic to an open  subgroup of a $p$-adic algebraic group.
\end{Lemma}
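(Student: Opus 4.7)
The strategy is to bootstrap the finite-to-one homomorphism supplied by Fact \ref{G-finite-to-one-H} into a $\Q$-definable homomorphism on a $\Q$-definable finite-index subgroup of $G$, and then to pass to $\Q$-points using $p$-adic Lie theory. Fact \ref{G-finite-to-one-H} yields an algebraic $\Q$-group $H$ and a $\Q$-definable homomorphism $\phi$ whose restriction to $G^{00}$ is finite-to-one. The hypothesis $G^{00} = G^0$, which says that $G^{00}$ equals the intersection of all $\Q$-definable finite-index subgroups, combined with a compactness argument, should allow one to extract a $\Q$-definable group homomorphism $\phi \colon G_1 \to H$ on an honest $\Q$-definable finite-index subgroup $G_1 \leq G$.

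Assuming such a $\phi$ in hand, the full kernel $K := \ker(\phi)$ is actually finite: by Fact \ref{G-finite-to-one-H} the set $K \cap G^{00}$ is finite, and the natural injection $K/(K \cap G^{00}) \hookrightarrow G_1/G^{00}$ shows that $|K/(K \cap G^{00})|$ is bounded, so since $K$ is $\Q$-definable in the saturated model, it must itself be finite. After replacing $H$ by the Zariski closure of $\phi(G_1)$ inside $H$ (an algebraic subgroup defined over $\Q$), we may further assume $\dim H = \dim G_1 = \dim G$.

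Now set $A := G_1(\Q)$ and $A_0 := K(\Q)$. Then $A$ is a finite-index subgroup of $G(\Q)$ and $A_0$ is a finite normal subgroup of $A$, and $\phi$ restricts to an injective homomorphism $A/A_0 \hookrightarrow H(\Q)$. To conclude, I would argue $\phi(A)$ is open in $H(\Q)$: the restriction $G_1(\Q) \to H(\Q)$ is an analytic map between $p$-adic Lie groups of equal dimension with finite kernel, hence its differential at the identity is an isomorphism of Lie algebras, and by the $p$-adic inverse function theorem it is a local homeomorphism near $e$. Its image is therefore a subgroup containing an open neighbourhood of the identity, hence open.

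The main obstacle I expect is the first step: promoting the data of Fact \ref{G-finite-to-one-H} into a genuine $\Q$-definable group homomorphism on a $\Q$-definable finite-index subgroup. The map in Fact \ref{G-finite-to-one-H} is a priori only described on the type-definable group $G^{00}$, and turning its graph into a definable graph defined over $\Q$, while preserving the homomorphism property on a definable subgroup, is precisely where the hypothesis $G^{00} = G^0$ is indispensable, since it lets us approximate $G^{00}$ from above by honest $\Q$-definable finite-index subgroups so that compactness produces a definable witness. The later dimension-theoretic and $p$-adic Lie arguments are comparatively routine.
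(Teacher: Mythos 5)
Your proposal follows essentially the same route as the paper: invoke Fact \ref{G-finite-to-one-H}, use $G^{00}=G^0$ and compactness to descend to a $\Q$-definable finite-to-one homomorphism on a $\Q$-definable finite-index subgroup, and then replace $H$ by the Zariski closure of the image so that the image is open. Your extra justifications (finiteness of the kernel, openness via the $p$-adic inverse function theorem) flesh out steps the paper leaves implicit, but the argument is the same.
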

\begin{proof}

By Fact \ref{G-finite-to-one-H}, there is an algebraic group $H$ over $\Q$ and a definable over $\Q$ finite-to-one group homomorphism from   $G^{00}$ to  $H(\K)$. Since $G^{00}=G^{0}$, by compactness, there is a $\Q$-definable finite index subgroup $A\leq G$   and a $\Q$-definable finite-to-one group homomorphism $f$ from  $A(\Q)$ to $H(\Q)$. Let $H_0(\Q)$ be the Zariski closure of $\im(f)$ in $H(\Q)$, then $H_0(\Q)$ is a $p$-adic algebraic group.   
By Remark 2.13 of \cite{Hrushovski-Pillay}, $\dim(\im(f))=\dim(H_0(\Q))$. By the topological definition of dimension $im(f)$ has interior in $H_{0}(\Q)$, hence $im(f)$, being a subgroup, is open in $H_{0}(\Q)$.
\end{proof}

\begin{Fact}\cite{P-Y}\label{dfg-implies-G0=G00}
Assuming $NIP$. If a definable group $G$ has $dfg$, then $G^{00}=G^{0}$.
\end{Fact}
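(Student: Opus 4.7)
The plan is to prove the nontrivial inclusion $G^{0} \leq G^{00}$; the reverse inclusion holds automatically, since any definable finite-index subgroup is a type-definable subgroup of bounded index. Equivalently, I want to show that every $M$-definable subset $X \subseteq G$ containing $G^{00}$ contains a definable subgroup of $G$ of finite index; or, equivalently again, that the compact group $G/G^{00}$ in its logic topology is profinite.

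Fix a $dfg$ type $p \in S_G(\M)$ definable over a small model $M$. By Fact \ref{f-generic-type-has-bounded-orbit}(v), $\mathrm{Stab}(p) = G^{00}$. The definability of $p$ furnishes a useful external description of $G^{00}$: for every $M$-definable $A \in p$, the set
\[
B_A := \{\, g \in G : gA \in p \,\}
\]
is $M$-definable (it is cut out by the formula $d_{p}\varphi(y)$, where $\varphi(x,y) := (y^{-1}x \in A)$), contains the identity, and
\[
G^{00} \;=\; \mathrm{Stab}(p) \;=\; \bigcap\{\, B_A : A \in p \text{ is } M\text{-definable} \,\}.
\]
Given any $M$-definable $X \supseteq G^{00}$, saturation together with closure of the family $\{B_A\}$ under finite intersection (since $A \cap A' \in p$ gives $B_{A \cap A'} \subseteq B_A \cap B_{A'}$) yields a single $M$-definable $A \in p$ with $B_A \subseteq X$.

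The crux, and the main obstacle, is then to locate a definable finite-index subgroup $H$ of $G$ sitting inside $B_A$. This is exactly where the definability of $p$, rather than merely its $f$-genericity, is used: because $p$ is a definable type, one can form its canonical Morley sequence and apply the NIP stabilizer machinery (of the flavour underlying Fact \ref{G-finite-to-one-H}) to a Boolean combination of small translates of $B_A$ to extract a definable finite-index subgroup that lies in $B_A$ after a controlled shrinkage of $A$. An alternative route uses the invariant Keisler measure provided by definable amenability (Fact \ref{amenable}): the measure of $gB_A$ depends only on the coset of $g$ modulo $\mathrm{Stab}(p)$, and the weak genericity of $A$ (Fact \ref{f-generic-type-has-bounded-orbit}(ii)) forces $B_A$ to be a union of finitely many cosets of a definable subgroup. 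Once such an $H$ is produced, every $M$-definable neighbourhood of $G^{00}$ contains a clopen subgroup of finite index in $G/G^{00}$, so the logic-topology group $G/G^{00}$ is totally disconnected, hence profinite, and $G^{0} = G^{00}$ follows.
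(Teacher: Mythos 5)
This statement appears in the paper only as a Fact quoted from \cite{P-Y}, with no proof given here, so I can only measure your attempt against the standard argument. Your skeleton is the right one: reduce to $G^{0}\leq G^{00}$, use $\mathrm{Stab}(p)=G^{00}$ for a global $f$-generic $p$ definable over $M$ (Fact \ref{f-generic-type-has-bounded-orbit}(v)), and try to exhibit $G^{00}$ as an intersection of definable finite-index subgroups via the $M$-definable sets $B_A=\{g: gA\in p\}$. But the step you yourself call ``the crux'' is not carried out. The sets $B_A$ are not subgroups, and neither of your two suggested ways of extracting a definable finite-index subgroup from $B_A$ is an argument: ``apply the NIP stabilizer machinery \dots\ after a controlled shrinkage of $A$'' names no lemma and proves nothing, and in the measure-theoretic variant the assertion that weak genericity of $A$ ``forces $B_A$ to be a union of finitely many cosets of a definable subgroup'' is precisely what needs proof (note also that invariance of the Keisler measure makes $\mu(gB_A)$ independent of $g$ outright, so that observation carries no information). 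As written there is a genuine gap at the only nontrivial point.

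Two short repairs are available. (1) Replace the $B_A$ by $\varphi$-stabilizers: for $\varphi(x;y)$ put $\tilde\varphi(x;y,z):=\varphi(z^{-1}x;y)$ and let $S_\varphi=\{g\in G:\ \tilde\varphi(x;b,h)\in p \Leftrightarrow \tilde\varphi(g^{-1}x;b,h)\in p \text{ for all } b,h\}$. The extra variable $z$ makes the condition composable, so $S_\varphi$ is a subgroup; it is definable over $M$ because $p$ is; it contains $\mathrm{Stab}(p)=G^{00}$, hence has bounded and therefore finite index; and $\bigcap_\varphi S_\varphi=\mathrm{Stab}(p)$ (take $h=\id$). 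Thus $G^{0}\leq\bigcap_\varphi S_\varphi=G^{00}$. (2) Alternatively, keep your $B_A$ but use the key fact you missed: since $hp=p$ for $h\in\mathrm{Stab}(p)$, one has $g\in B_A\Leftrightarrow hg\in B_A$, so $B_A$ is a union of cosets of $G^{00}$ and its image in the compact group $G/G^{00}$ is clopen. These clopen images intersect to the identity, so $G/G^{00}$ is totally disconnected, hence profinite, and its open normal subgroups pull back to definable (clopen in the logic topology, hence both type-definable and co-type-definable) finite-index subgroups of $G$ intersecting to $G^{00}$. Your write-up runs this deduction in the wrong order, trying to produce the finite-index subgroup first and deduce profiniteness from it, which is exactly backwards.
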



\begin{Cor}\label{fsg-dfg-f-to-1}
Let $G$ be  a group definable over $\Q$ which is either $fsg$ or $dfg$.  Then there is a finite index $\Q$-definable subgroup $A\leq G(\Q)$ and a finite nornmal subgroup $A_0\sq A$ such that  $A/A_0$ is $\Q$-definably isomorphic to an  open subgroup of a $p$-adic algebraic group.
\end{Cor}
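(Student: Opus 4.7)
The plan is to reduce the statement to Lemma \ref{f-to-1} by verifying its two hypotheses, namely that $G$ is definably amenable and that $G^{00}=G^{0}$. Once both hold, the conclusion of the corollary is literally the conclusion of the lemma, so nothing further will be required.

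For definable amenability, I would handle the two cases simultaneously. Both \emph{fsg} and \emph{dfg} are by definition special cases of strongly $f$-generic: an \emph{fsg} type is finitely satisfiable in some small $M$, hence its $G$-translates are all $M$-invariant; a \emph{dfg} type is definable over some small $M$, with the same consequence. Thus in either case $G$ admits a strongly $f$-generic global type, and Fact \ref{amenable}(ii) immediately gives that $G$ is definably amenable. (Since this paper works within NIP, Fact \ref{amenable} applies without additional hypotheses, as $\Th(\Q)$ is NIP.)

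For the equality $G^{00}=G^{0}$, the two cases are handled by different earlier facts but the conclusion is the same. If $G$ has \emph{dfg}, this is exactly Fact \ref{dfg-implies-G0=G00}. If $G$ has \emph{fsg}, then by Fact \ref{fsg=compact}(ii) $G$ is definably compact, and then Fact \ref{fsg-G00=G0}(i) yields $G^{00}=G^{0}$ (with both equal to $\Ker(\st)$). With both hypotheses of Lemma \ref{f-to-1} verified, I apply the lemma to obtain the finite index subgroup $A\leq G(\Q)$ and the finite subgroup $A_0\sq A$ with $A/A_0$ isomorphic to a definable open subgroup of an algebraic group $H$ over $\Q$, completing the proof.

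There is essentially no obstacle here, since the corollary is a packaging result: the real content is distributed across Lemma \ref{f-to-1} (which in turn rests on Fact \ref{G-finite-to-one-H}) and the structural facts \ref{fsg-G00=G0}, \ref{fsg=compact}, \ref{dfg-implies-G0=G00}. The only thing to be slightly careful about is the NIP assumption needed to invoke Fact \ref{amenable}, which is automatic in the $p$-adic setting.
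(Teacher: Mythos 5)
Your proposal is correct and follows exactly the paper's route: the paper's proof is the one-line citation ``By Fact \ref{fsg-G00=G0}, Fact \ref{dfg-implies-G0=G00}, and Lemma \ref{f-to-1}'', and you have simply spelled out how those facts verify the two hypotheses of Lemma \ref{f-to-1} in each of the two cases. No issues.
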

\begin{proof}
By Fact \ref{fsg-G00=G0}, Fact \ref{dfg-implies-G0=G00}, and Lemma \ref{f-to-1}.
\end{proof}

\subsection{One-dimensional $dfg$ groups over $\Q$}

Recall that the dp-rank of a partial type $\pi(x)$ over $A$ is $\geq \kappa$ if there are $a\models \pi(x)$ and $\kappa$  sequences $(I_i,i<\kappa)$ mutually indiscernible over $A$ and none of which is indiscernible over $Aa$. Dp-minimal theories are theories in which all
nonalgebraic $1$-types have dp-rank
$1$. Every dp-minimal theory has $NIP$.

\begin{Fact}\cite{Dolich-Goodrick-Lippel}\label{Q-p-is-dp-minimal}
$\Th(\Q)$ is dp-minimal.
\end{Fact}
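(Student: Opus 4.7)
The plan is to prove dp-minimality of $\Th(\Q)$ directly by showing every $1$-type has dp-rank at most $1$. Equivalently, for any element $a$ in the monster model, any small parameter set $A$, and any two mutually $A$-indiscernible sequences $I_1$ and $I_2$, at least one of them must remain indiscernible over $A\cup\{a\}$. The combinatorial reformulation in terms of ``icc'' patterns (inp-patterns of depth $\geq 2$) should then be ruled out.

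The key technical tool is Macintyre's quantifier elimination in the language $L_{\text{Mac}}$ expanding the ring language by the $n$-th power predicates $P_n$. Using this, I would argue that a $1$-type $\tp(a/M)$ is essentially determined by two pieces of data: (i) the valuation cut, i.e.\ the function $m \mapsto \nu(a-m)$ from $M$ to $\Gamma\cup\{\infty\}$, and (ii) the ``residue-pattern'' given by the $P_n$-membership of $a-m$ for each $m\in M$ and $n\in\mathbb N^+$. Thus any inp-pattern for a $1$-type must arise from the interaction of these two kinds of data.

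The strategy I would follow: first, the value group $\mathbb Z$ is a Presburger structure, which is dp-minimal. Hence given an indiscernible sequence $(b_i)_i$ and an element $a$, the valuations $\nu(a-b_i)$ form a monotone (on a cofinal subsequence) sequence in $\Gamma$, so $a$ can break indiscernibility of at most one of $I_1, I_2$ via valuation data alone. The residue field is $\mathbb F_p$, which is finite, hence stable with dp-rank $0$; and the quotient $\Q^\times/(\Q^\times)^n$ is finite for each $n$, so the $P_n$-data contributes only finitely many distinctions per valuation class and cannot on its own generate an infinite failure of indiscernibility.

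The main obstacle is controlling the interaction between the valuation and the $P_n$-predicates simultaneously, and excluding the possibility that the combination generates genuinely higher dp-rank. The cleanest route is via an Ax--Kochen--Ershov style transfer result for dp-rank in henselian valued fields of characteristic zero with a $p$-adically closed value group, bounding the dp-rank of the valued field by that of the value group together with the residue field. Since both are dp-minimal (respectively, stable of dp-rank $0$), this yields dp-minimality of $\Q$. A more hands-on version of this argument, based directly on Macintyre's quantifier elimination and a case analysis of how an inp-pattern could simultaneously involve valuation cuts and $P_n$-predicates, is essentially what Dolich, Goodrick and Lippel carry out.
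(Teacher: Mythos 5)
The paper does not actually prove this statement: it is imported as a Fact with a citation to Dolich--Goodrick--Lippel, so there is no internal proof to compare against. Judged on its own, your sketch assembles the right ingredients (Macintyre quantifier elimination, the description of definable subsets of the line by valuation conditions and $P_n$-cosets, dp-minimality of Presburger arithmetic), but two of its steps have genuine gaps. The central one: in the definition of dp-rank the mutually indiscernible sequences $I_1,I_2$ are sequences of \emph{tuples}, not of field elements, so your key monotonicity argument --- that $\nu(a-b_i)$ is eventually monotone along an indiscernible sequence, hence $a$ can break at most one sequence ``via valuation data'' --- only makes sense when the $b_i$ are singletons. To handle tuples one must instead analyse, for a formula $\phi(x,\bar y)$ with $|x|=1$, the uniformly definable family of sets $\phi(\K,\bar b)$ (finite Boolean combinations of balls and $P_n$-cosets, by QE) and run a combinatorial counting argument against a depth-$2$ inp-pattern; that is precisely the hard step, and it is the one your sketch leaves unproved. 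Relatedly, describing a $1$-type by $m\mapsto\nu(a-m)$ and the $P_n$-membership of $a-m$ accounts only for linear polynomials, whereas the atomic formulas after QE involve arbitrary $f(x)\in M[x]$; reducing to linear data requires factoring over the algebraic closure and controlling the finitely many roots.

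The alternative ``clean route'' you propose, an Ax--Kochen--Ershov style transfer of dp-rank, is not available off the shelf: $\Q$ has mixed characteristic $(0,p)$, the equicharacteristic-zero transfer theorems do not apply, and the mixed-characteristic versions (which require finite ramification) are themselves substantial theorems that postdate the cited paper. Note also that the value group of $\Q$ is a $\mathbb{Z}$-group, i.e.\ a model of Presburger arithmetic; ``$p$-adically closed value group'' is not a meaningful hypothesis. So the proposal correctly identifies the landscape but does not yet constitute a proof; for the purposes of this paper the statement should simply remain a quoted Fact.
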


\begin{Fact}\cite{Simon-dp-minimal}\label{dp-rank-1}
Assume that $T$ has $NIP$. Let $N\models T$ be a small model. If $p$ is an $N$-invariant global type of dp-rank $1$, then $p$ is either finitely satisfiable in $N$ or  definable over $N$.
\end{Fact}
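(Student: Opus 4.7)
The plan is to prove the contrapositive. Assume $T$ has NIP, $p$ is a global $N$-invariant type of dp-rank $1$, and toward contradiction that $p$ is \emph{neither} finitely satisfiable in $N$ \emph{nor} definable over $N$. I will construct two $N$-indiscernible sequences $I_0, I_1$, mutually indiscernible over $N$, such that neither is indiscernible over $Na$ for a common $a \models p|N I_0 I_1$. This exhibits dp-rank $\geq 2$ for a realization of $p$, contradicting the hypothesis.

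First, from failure of finite satisfiability I pick a formula $\phi(x, b) \in p$ with $\phi(\cdot, b) \cap N = \emptyset$. By $N$-invariance of $p$, every $N$-conjugate of $b$ also yields a formula in $p$, so repeatedly extending together with realizations of $p$ and applying Ramsey plus compactness gives an $N$-indiscernible sequence $I_0 = (b_i)_{i<\omega}$ with $b_0 = b$ such that $\{\phi(x, b_i) : i < \omega\} \cup p|N$ is consistent, realized by some $a$. Since $\phi(a, \cdot)$ holds on every $b_i$ but on no element of $N$, the sequence $I_0$ cannot be indiscernible over $Na$.

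Second, from failure of definability I pick a formula $\psi(x, y)$ such that $D_\psi := \{c : \psi(x, c) \in p\}$ is $N$-invariant but not definable by any $L(N)$-formula. By NIP (bounded alternation rank of $\psi$ on indiscernibles) together with the standard Hrushovski-Pillay characterization that, for $N$-invariant $p$, definability over $N$ is equivalent to preservation of $N$-indiscernibility of every $(c_i)$ upon adding a realization of $p$, I extract an $N$-indiscernible sequence $I_1 = (c_i)_{i<\omega}$ whose indiscernibility is destroyed by some $a' \models p$. A joint Ramsey-compactness extraction then places $I_0$ and $I_1$ into mutually $N$-indiscernible position, while a common $a \models p|N I_0 I_1$ still witnesses that neither sequence is indiscernible over $Na$.

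The hard part will be this last step: carrying out the simultaneous extraction without destroying either obstruction. Here $N$-invariance of $p$ is essential, since it lets one transport each witness along automorphisms fixing $N$ into a compatible configuration before passing to indiscernibles. Once the mutually $N$-indiscernible pair $(I_0, I_1)$ is in place with both non-indiscernibilities over $Na$ preserved, dp-rank $1$ of $p$ is violated, yielding the desired contradiction.
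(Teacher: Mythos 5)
This statement is quoted in the paper as a Fact from \cite{Simon-dp-minimal}, with no proof supplied, so there is no internal argument to compare yours against; judged on its own terms, your proposal has a genuine gap already at the first step. You claim that, having chosen $\phi(x,b)\in p$ with no solution in $N$ and an $N$-indiscernible sequence $I_0=(b_i)_{i<\omega}$ of conjugates of $b$ together with $a$ realizing $\{\phi(x,b_i):i<\omega\}\cup p|_N$, the sequence $I_0$ ``cannot be indiscernible over $Na$'' because $\phi(a,\cdot)$ holds of every $b_i$ while $\phi(\cdot,b)$ has no solution in $N$. This is a non sequitur: those two conditions concern different variables and say nothing about whether $\tp(b_{i_1}\dots b_{i_k}/Na)$ depends on the choice of indices. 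Concretely, in $(\mathbb{Q},<)$ with $N=\mathbb{Q}$ and $p$ the type at $+\infty$, take $\phi(x,b)$ to be $x>b$ with $b$ above $N$; an increasing $N$-indiscernible sequence of such $b_i$ together with any $a$ above all of them satisfies all of your hypotheses, yet $(b_i)$ is completely indiscernible over $Na$. Worse, the obstruction you want is unobtainable in the form you set it up: since $p$ is $N$-invariant and all terms of an $N$-indiscernible sequence realize the same type over $N$, for any formula $\psi(x,y)$ one has $\psi(x,b_i)\in p$ for all $i$ or for none; so once $a\models p|_{NI_0I_1}$ (which you require at the end), no formula in $a$ and a \emph{single} term of either sequence can break indiscernibility. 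Any correct witness must act on tuples of length at least two from the sequence, and manufacturing such witnesses from non-finite-satisfiability and from non-definability is precisely the substantive content of Simon's proof.

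The same defect affects your treatment of $I_1$: by invariance, $D_\psi$ is a union of complete types over $N$, and an $N$-indiscernible sequence lives inside a single such type, so membership in $D_\psi$ cannot alternate along it; the non-definability must be converted into order-sensitive behaviour on pairs before it can be fed into the dp-rank hypothesis. Finally, the ``joint Ramsey--compactness extraction'' that you defer is not a routine finishing move: extracting mutual indiscernibility over $N$ replaces the sequences by realizations of an EM-type that no longer records their interaction with the fixed $a$, while extracting over $Na$ risks erasing exactly the non-indiscernibility you must preserve. You correctly identify this as the hard part, but with both witnesses incorrectly constructed and the combination step unexecuted, the proposal does not constitute a proof; I would direct you to the argument in \cite{Simon-dp-minimal} for how the two obstructions are actually realized and combined.
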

\begin{Fact}\cite{Simon-dp-minimal}\label{dp-rank=acl-dim}
Assume that $T$ is dp-minimal, and $\acl$ satisfies exchange. Then  dp-rank coincides with the $\acl$-dimension.
\end{Fact}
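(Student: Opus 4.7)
The plan is to show, for every tuple $a$ and parameter set $A$, that the dp-rank of $\tp(a/A)$ equals the $\acl$-dimension $\dim_{\acl}(a/A)$. The key technical ingredient I would invoke is \emph{additivity of dp-rank} in dp-minimal theories: for any tuples $a,b$ and any set $A$,
\[
\mathrm{dp\text{-}rk}(ab/A) \;=\; \mathrm{dp\text{-}rk}(a/Ab) + \mathrm{dp\text{-}rk}(b/A).
\]
Once additivity is in hand, everything else is a clean reduction via exchange, as outlined below.

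First I would treat the single-element case. For a single element $c$ and base $B$, dp-minimality of $T$ forces $\mathrm{dp\text{-}rk}(c/B) \in \{0,1\}$, and one checks that $\mathrm{dp\text{-}rk}(c/B) = 0$ iff $c \in \acl(B)$: if $c$ is algebraic over $B$, any $B$-indiscernible sequence remains $Bc$-indiscernible; conversely, if $c \notin \acl(B)$ one can construct a $B$-indiscernible sequence starting with $c$ that fails to be $Bc$-indiscernible, so its dp-rank is at least $1$. This gives the equality for $1$-types directly.

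Next, for a general tuple $a = (a_1,\ldots,a_n)$, I would apply the $\acl$-exchange property to reorder the coordinates so that $a_1, \ldots, a_k$ are $\acl$-independent over $A$ and the remaining coordinates lie in $\acl(A a_1 \ldots a_k)$, where $k = \dim_{\acl}(a/A)$. Iterating additivity yields
\[
\mathrm{dp\text{-}rk}(a/A) \;=\; \sum_{i=1}^n \mathrm{dp\text{-}rk}\bigl(a_i \,/\, A a_1 \ldots a_{i-1}\bigr),
\]
and by the single-element case the first $k$ summands equal $1$ (by independence) while the remaining $n-k$ summands equal $0$ (by algebraicity). Hence $\mathrm{dp\text{-}rk}(a/A) = k = \dim_{\acl}(a/A)$, as required.

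The hard part is additivity itself. The subadditive inequality $\leq$ holds in all NIP theories by a pigeonhole on witness sequences, but the reverse inequality $\geq$ genuinely needs dp-minimality. One must take witness families $(I_i)_{i<k}$ for $\mathrm{dp\text{-}rk}(b/A)$ and $(J_j)_{j<\ell}$ for $\mathrm{dp\text{-}rk}(a/Ab)$ and combine them into a single family of $k+\ell$ mutually $A$-indiscernible sequences, none of which is indiscernible over $Aab$. The delicate point is preserving mutual indiscernibility across the boundary between the two families, which is achieved via Ramsey-style extraction arguments specific to the dp-minimal setting — this is essentially the content of Simon's proof in \cite{Simon-dp-minimal}.
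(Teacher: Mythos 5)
The paper offers no proof of this statement: it is imported as a black-box Fact from \cite{Simon-dp-minimal}, so there is no in-paper argument to compare yours against, and your proposal must be judged on its own terms. Half of it is fine: the inequality $\mathrm{dp\text{-}rk}(a/A)\leq \dim_{\acl}(a/A)$ follows exactly as you say, since \emph{sub}-additivity of dp-rank is the Kaplan--Onshuus--Usvyatsov theorem, the singleton case ($\mathrm{dp\text{-}rk}(c/B)=0$ iff $c\in\acl(B)$, and always $\leq 1$ by dp-minimality) is standard, and exchange lets you reorder the tuple so that a basis comes first.

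The genuine gap is the reverse inequality. Your ``key technical ingredient'' is \emph{full} additivity of dp-rank, but only sub-additivity is a theorem; whether $\mathrm{dp\text{-}rk}(ab/A)\geq \mathrm{dp\text{-}rk}(a/Ab)+\mathrm{dp\text{-}rk}(b/A)$ holds is open even in general NIP theories, and in the dp-minimal-with-exchange setting this super-additivity is essentially equivalent to the Fact you are proving, so invoking it is circular. Your sketch of how to get it --- merge the witness families $(I_i)_{i<k}$ for $b/A$ and $(J_j)_{j<\ell}$ for $a/Ab$ by ``Ramsey-style extraction'' --- fails at precisely the delicate point you flag: the $J_j$ are only mutually indiscernible over $Ab$, so to combine the two families into a single mutually $A$-indiscernible array you must re-extract sequences realizing only the EM-type over $A$, and the extracted copies need not fail to be indiscernible over $Aab$ (the failure was witnessed by particular elements that extraction discards); symmetrically, the re-extracted $I_i$ may become indiscernible over $Ab$. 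Simon's argument does not pass through additivity at all: starting from an $\acl$-independent tuple $(a_1,\dots,a_k)$ it uses exchange to build directly an array of $k$ mutually $A$-indiscernible \emph{non-constant} sequences with $a_i$ occurring as a term of the $i$-th row, whereupon non-constancy alone (via the formula $x=a_i$) witnesses failure of indiscernibility over $Aa$. If you want to keep your architecture, you must actually prove the super-additivity instances you use, which amounts to carrying out that construction rather than citing it.
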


\begin{Fact}\cite{$NIP$II}\label{nonforking=invariant}
If $T$ has $NIP$ then a global complete type does not fork over a small submodel $M$ iff it is $M$-invariant.
\end{Fact}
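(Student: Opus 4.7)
The plan is to prove the two directions separately. The forward direction (invariance implies nonforking) holds in any complete first-order theory; only the converse requires NIP.

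For the forward direction, I would first assume $p \in S_G(\M)$ is $M$-invariant and let $\phi(x, a) \in p$. For any $M$-indiscernible sequence $(a_i)_{i < \omega}$ with $a_0 = a$, each $a_i \equiv_M a$, hence by $M$-invariance, $\phi(x, a_i) \in p$ for every $i$. The set $\{\phi(x, a_i) : i < \omega\}$ is then contained in the consistent type $p$, so it is consistent, showing that $\phi(x, a)$ does not divide over $M$. Since this holds for every formula in $p$, the type $p$ does not fork over $M$.

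For the converse, I will assume $p$ does not fork over $M$ and, for contradiction, that some $\sigma \in \mathrm{Aut}(\M/M)$ moves $p$: there is $\phi(x, a) \in p$ with $\neg\phi(x, a') \in p$ where $a' = \sigma(a)$, so $a \equiv_M a'$. The strategy is to extract, via Ramsey together with compactness, an $M$-indiscernible sequence of pairs $(a_i, b_i)_{i < \omega}$ all realizing $\tp(a, a' / M)$. Then the formula $\psi(x, y, z) := \phi(x, y) \wedge \neg\phi(x, z)$ lies in $p$ at each $(a_i, b_i)$, and since $p$ does not divide over $M$, the family $\{\psi(x, a_i, b_i) : i < \omega\}$ is consistent; pick a realization $c$. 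Then $\phi(c, \cdot)$ holds at every $a_i$ and fails at every $b_i$. The final step will be to use the pairwise indiscernibility to build an $M$-indiscernible sequence of singletons on which $\phi(c, \cdot)$ alternates unboundedly, directly contradicting NIP.

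The main technical obstacle will be that last extraction and interleaving step: one has to massage the sequence of pairs into a single $M$-indiscernible sequence with forced alternation in the $\phi$-values, which requires either a careful diagonal construction or an appeal to the bounded alternation number characterization of NIP. A cleaner alternative, which I would favor in the write-up, is to invoke Shelah's theorem that under NIP nonforking over a model is equivalent to invariance under $\mathrm{Aut}(\M/\mathrm{bdd}(M))$, and then note that Lascar strong types over a model coincide with the type, so this bdd-invariance collapses to ordinary $M$-invariance, yielding the fact directly.
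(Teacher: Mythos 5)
The paper offers no proof of this statement: it is quoted verbatim as a Fact with a citation to \cite{NIPI}, where it is essentially Proposition 2.1. So the comparison is with your argument on its own terms. Your forward direction is correct and standard: an $M$-invariant global type contains no formula dividing over $M$, and for a \emph{complete global} type this yields non-forking, since if some $\phi\in p$ implied a finite disjunction of dividing formulas, completeness and consistency of $p$ would force one of the disjuncts into $p$.

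The converse as you sketch it has a genuine gap, and it is exactly the one you flag at the end. After extracting an $M$-indiscernible sequence of pairs $(a_i,b_i)_{i<\omega}$ realizing $\tp(a,a'/M)$ and finding $c$ with $\phi(c,a_i)\wedge\neg\phi(c,b_i)$ for all $i$, you have \emph{not} contradicted NIP: along the indiscernible sequence of pairs the formula $\phi(x,y)\wedge\neg\phi(x,z)$ has constant truth value; each projection $(a_i)_i$ and $(b_i)_i$ is separately $M$-indiscernible with constant $\phi$-value; and the interleaved sequence $a_0,b_0,a_1,b_1,\dots$ is in general not $M$-indiscernible, so no alternation along an indiscernible sequence has been produced. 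No diagonal re-extraction from this configuration can fix it --- the missing input is specific to working over a model. The standard repair (this is how \cite{NIPI} argues): first show that if $(c_i)_{i<\omega}$ is $M$-indiscernible and $p$ does not divide over $M$, then $\phi(x,c_0)\in p\iff\phi(x,c_1)\in p$; here one pairs \emph{consecutive} terms $(c_{2i},c_{2i+1})$, so the interleaved sequence is the original indiscernible sequence and the alternation argument genuinely contradicts NIP. Then, for arbitrary $a\equiv_M a'$, realize an $M$-invariant (e.g.\ coheir) extension of $\tp(a/M)$ over $Maa'$ by some $b$, so that both $(a,b)$ and $(a',b)$ begin $M$-indiscernible sequences; applying the previous step twice gives $\phi(x,a)\in p\iff\phi(x,b)\in p\iff\phi(x,a')\in p$. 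This Lascar-distance-two step over a model is the essential content you are missing. Finally, your proposed ``cleaner alternative'' --- invoking the equivalence of non-forking with invariance under Lascar strong (or bounded-closure) automorphisms and then collapsing over a model --- is not an alternative proof but a restatement of the theorem being proved; quoting it reduces your write-up to the same citation the paper already gives.
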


\begin{Lemma}
Let $G$ be a $\Q$-definable group of dimension $1$. Then $G$ is either $dfg$ or $fsg$.
\end{Lemma}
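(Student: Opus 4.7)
The plan is to apply the dp-rank one dichotomy of Simon (the unlabelled fact from \cite{Simon-dp-minimal} cited just after Fact~\ref{Q-p-is-dp-minimal}) to a strongly $f$-generic type of $G$. The hypotheses feeding into that dichotomy will come from one-dimensional definable amenability (Fact~\ref{one-dim-definably-amenable}), dp-minimality of $\Th(\Q)$ (Fact~\ref{Q-p-is-dp-minimal}), and the identification of dp-rank with $\acl$-dimension (Fact~\ref{dp-rank=acl-dim}).

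First I would produce the candidate type. By Fact~\ref{one-dim-definably-amenable}, $G$ is abelian-by-finite and hence definably amenable, so Fact~\ref{f-generic-type-has-bounded-orbit}(i) furnishes a global strongly $f$-generic type $p\in S_G(\M)$. Fix a small $M\prec\M$ witnessing this, so that every left $G(\M)$-translate of $p$ does not fork over $M$; in particular $p$ itself does not fork over $M$. Since $\Th(\Q)$ has NIP, Fact~\ref{nonforking=invariant} upgrades this to $M$-invariance of $p$.

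Next I would check that $p$ has dp-rank exactly $1$. The $p$-adic field has $\acl$ equal to field-theoretic algebraic closure, which satisfies exchange, so combined with dp-minimality Fact~\ref{dp-rank=acl-dim} says dp-rank equals $\acl$-dimension. Since $\dim(G)=1$, any realization of $p$ has $\acl$-dimension at most $1$, giving $\mathrm{dp\text{-}rk}(p)\le 1$. To exclude the algebraic case, suppose $p$ had dp-rank $0$; then $p$ would be realized by some $a$ algebraic over $M$. But left multiplication is a bijection on $G$, so the set $\{g\cdot a:g\in G(\M)\}$ has unbounded cardinality, forcing the $G$-orbit $\{g\cdot p:g\in G(\M)\}$ to be unbounded in $S_G(\M)$, contradicting Fact~\ref{f-generic-type-has-bounded-orbit}(iii). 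Hence $\mathrm{dp\text{-}rk}(p)=1$.

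Finally I would invoke Simon's dichotomy: an $M$-invariant global type of dp-rank $1$ in a NIP theory is either finitely satisfiable in $M$ or definable over $M$. In the former case $p$ witnesses that $G$ is $fsg$; in the latter, $p$ witnesses that $G$ has $dfg$. The only genuinely delicate point is verifying that the dp-rank is exactly $1$ rather than merely at most $1$, so that the cited dichotomy applies; the bounded-orbit characterisation of $f$-generics handles this cleanly.
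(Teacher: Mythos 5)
Your proposal is correct and follows essentially the same route as the paper: get definable amenability from Fact \ref{one-dim-definably-amenable}, take a strongly $f$-generic type, upgrade nonforking to $M$-invariance via Fact \ref{nonforking=invariant}, and apply Simon's dp-rank-$1$ dichotomy using dp-minimality and Fact \ref{dp-rank=acl-dim}. The only difference is that you explicitly rule out the dp-rank-$0$ case via the bounded-orbit characterisation, a detail the paper leaves implicit; that check is a sensible addition and does not change the argument.
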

\begin{proof}
By Fact \ref{one-dim-definably-amenable}, $G$ is definably amenable. By Fact \ref{f-generic-type-has-bounded-orbit}, $G$ admits a strongly $f$-generic type. Let $p\in S_G(\K)$ be a global strongly $f$-generic type. There is a small submodel $M$ such that  every left translate of $p$ does not fork over $M$, and hence $M$-invariant by Fact \ref{nonforking=invariant}. By Fact \ref{Q-p-is-dp-minimal} and Fact \ref{dp-rank-1},  $p$ is either definable over $M$ or finitely satisfiable in $M$. So $G$ is either $dfg$ or $fsg$.
\end{proof}

Recall the notion of generically stable type from \cite{$NIP$II}: a global type $p$ is generically stable if it is both definable and finitely satisfiable
in some small model.

\begin{Fact}\cite{Simon-dis-nodis}\label{no-generical-stable-types}
Let $p\in S(\K)$ be a non-algebraic global type. Then $p$ is not generically stable.
\end{Fact}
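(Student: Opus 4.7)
My approach is to deduce the statement from the distality of $\Th(\Q)$ (Fact 1.1) via the general claim: in any distal NIP theory, every generically stable global type is algebraic. I would use a standard reformulation of distality — that whenever $p_1, p_2 \in S(\K)$ are $M$-invariant with $p_1$ definable over $M$ and $p_2$ finitely satisfiable in $M$, they are weakly orthogonal, i.e.\ $p_1(x) \cup p_2(y)$ has a unique completion to a complete $2$-type over $\K$.

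Suppose for contradiction that $p \in S(\K)$ is generically stable and non-algebraic. By definition $p$ is both definable over some small model and finitely satisfiable in some (possibly different) small model; after enlarging, one can arrange both properties to hold over the same small $M \prec \K$. Applying the distality reformulation with $p_1 = p_2 = p$ then forces $p(x) \cup p(y)$ to admit a unique complete $2$-type extension over $\K$.

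I would then produce two distinct extensions to reach a contradiction. First, choosing any $a \models p$, the diagonal tuple $(a, a)$ satisfies $p(x) \wedge p(y) \wedge x = y$. Second, choosing $a \models p$ and $b \models p \mid \K a$, the tuple $(a, b)$ satisfies $p(x) \wedge p(y)$ with $b \neq a$: indeed, $p$ non-algebraic gives $a \notin \acl(M)$, and by $M$-invariance the formula $x = a$ cannot belong to $p \mid \K a$ (an $M$-automorphism sending $a$ to a distinct realization of $p \mid M$ would transport $x = a$ to an incompatible formula inside the same $M$-invariant type). Hence $(a, a)$ and $(a, b)$ realize different complete $2$-types over $\K$, contradicting weak orthogonality. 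Therefore $p$ must be algebraic.

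The main obstacle I expect is the descent step: arranging both definability and finite satisfiability of $p$ over the \emph{same} small model $M$. This is where generic stability is crucially used — the total indiscernibility of a Morley sequence of $p$ allows one to transfer the finitely-satisfiable base into the definable base and conclude that both hold over a common small model. Once this descent is in place, the two-completion argument is direct and distality delivers the contradiction immediately.
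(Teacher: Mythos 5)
The paper does not prove this statement at all: it is imported verbatim as a Fact from Simon's paper on distal theories, so there is no internal proof to compare against. Your reconstruction is correct and is essentially the standard argument: distality of $\Th(\Q)$ (Fact 1.1, glossed in the paper exactly as weak orthogonality of definable and finitely satisfiable global types) applied to $p_1=p_2=p$, together with the two completions $\tp(a,a/\K)$ and $\tp(a,b/\K)$ for $b\models p\mid \K a$, yields the contradiction. Two small remarks. First, the ``descent step'' you flag as the main obstacle is in fact trivial and needs no Morley-sequence argument: if $p$ is definable over $M_1$ and finitely satisfiable in $M_2$, both properties are preserved upward, so any small model containing $M_1\cup M_2$ works. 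Second, the cleanest justification that $x=a\notin p\mid \K a$ is not the automorphism transport you sketch (which is slightly delicate, since $p\mid\K a$ lives over a larger domain) but simply that the canonical extension of a type finitely satisfiable in $M$ remains finitely satisfiable in $M$, so $x=a\in p\mid\K a$ would force $a\in M$, contradicting non-algebraicity; equivalently, by $M$-invariance $x=a\in p\mid\K a$ iff $x=a'\in p$ for some $a'\in\K$ realizing $\tp(a/M)$, which fails since $p$ is not realized. With those points tightened, your argument is a faithful rendering of how the cited result is actually obtained.
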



\begin{Cor}\label{dfg-implies-non-compact}

If $G$ is an infinite $\Q$-definable group  which has  $dfg$ then  $G$ is NOT definably compact.
\end{Cor}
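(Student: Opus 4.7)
The plan is to derive a direct contradiction from the assumption that a $dfg$ group $G$ over $\Q$ is also definably compact, by exhibiting a global type that is simultaneously definable and finitely satisfiable in a small model (i.e.\ generically stable), which Fact \ref{no-generical-stable-types} forbids in the $p$-adic setting.

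First I would unpack what $dfg$ gives: by definition, $G$ admits a global $f$-generic type $p\in S_G(\K)$ that is definable over some small submodel $M\prec\K$. Since $G$ is infinite (a $dfg$ group has a non-algebraic global type), $p$ is non-algebraic. Next, if we assume for contradiction that $G$ is definably compact, then Fact \ref{fsg=compact} produces a global $fsg$ type. The key feature I would invoke at this step is the standard property of $fsg$ groups (coming from the $fsg$ definition together with Fact \ref{f-generic-type-has-bounded-orbit}): every global $f$-generic type of an $fsg$ group is finitely satisfiable in a fixed small model $M_0$. In particular, our $dfg$ type $p$, being $f$-generic, must be finitely satisfiable in $M_0$.

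Passing to a common small model $M'$ containing both $M$ and $M_0$, the type $p$ is then both definable over $M'$ and finitely satisfiable in $M'$; that is precisely the definition of generically stable recalled just before Fact \ref{no-generical-stable-types}. But Fact \ref{no-generical-stable-types} (a consequence of distality of $\Th(\Q)$, i.e.\ Fact 0.1) asserts that no non-algebraic global type is generically stable, contradicting the non-algebraicity of $p$.

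The entire argument is essentially a one-line combination of the two cited facts, so there is no substantial obstacle; the only point that requires a moment of care is the transfer step asserting that in an $fsg$ group \emph{every} $f$-generic type (not just the distinguished $fsg$ type witnessing the property) is finitely satisfiable in a small model. If one prefers to avoid appealing to that uniformity, an alternative is to observe directly that any $f$-generic type in an $fsg$ group has bounded $G$-orbit and lies in the closure of the (finitely satisfiable) $fsg$ type's orbit, which again forces $p$ to be finitely satisfiable in a small model and yields the same contradiction via Fact \ref{no-generical-stable-types}.
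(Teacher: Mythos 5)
Your argument is correct and is precisely the one the paper intends: the paper's ``proof'' consists only of the words ``By Fact \ref{no-generical-stable-types} and Fact \ref{fsg=compact} we conclude directly,'' and you have supplied the missing middle step (that in an $fsg$ group every $f$-generic type is finitely satisfiable in a fixed small model, so a $dfg$ type would be generically stable) in the standard way. The only caveat is the degenerate case of a finite $G$, which trivially has both $dfg$ and definable compactness; this is excluded by the implicit convention that the groups under discussion are infinite, and your parenthetical remark should be read as invoking that convention rather than as a proof that $G$ is infinite.
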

\begin{proof} Suppose for a contradiction (using Fact \ref{fsg=compact}) that $G$ were both $dfg$ and $fsg$.  Let $p$ be a  global definable (strongly) $f$-generic type. By \cite{HPS2} $p$ is generic, namely every formula in $p$ is generic. By Proposition 4.2 of \cite{$NIP$}, $p$ (and all its translates) are finitely satisfiable in a small model. So $p$ is generically stable, which contradicts Fact \ref{no-generical-stable-types}. This proves the Corollary.

\end{proof}

The following is from \cite{PPY}. Part (a) is Lemma 2.1 there, Part (b)(i) is contained in Proposition 2.3 there, and part (b)(ii) in Proposition 2.4.  Let us emphasize that the data in  items (ii) and (iii) of part (a) determine complete types over $M$ as is proved in \cite{PPY}.
\begin{Fact}\label{complete 1 types over Qp}
(a) The complete $1$-types over $M$ (or $\Q$) are precisely the following:
\begin{enumerate}
  \item [(i)] The realized types $\tp(a/M)$ for each $a\in \Q$.
  \item  [(ii)] For each $a\in \Q$ and coset $C$ of $(\K^{*})^0$, the type $p_{a,C}$ saying that $x$ is infinitesimally
close to a (i.e. $\nu(x-a)>n$ for each $n\in \N$), and $(x-a)\in C$.
  \item  [(iii)] For each coset $C$ as above the type $p_{\infty, C}$ saying that $x\in C$ and $\nu(x)<n$ for all $n\in \mathbb Z$.
  \end{enumerate}
  (b)
  \begin{enumerate}
    \item  [(i)]
    A global type of $\Ga(\K)$ is (strongly) $f$-generic iff it is  an heir of some $p_{\infty, C}$.
  \item  [(ii)] A global type of $\Gm(\K)$ is (strongly) $f$-generic iff it is  an heir of some $p_{\infty, C}$ or $p_{_{0,C}}$.
\end{enumerate}
\end{Fact}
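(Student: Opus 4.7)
My plan is to use Macintyre's quantifier elimination to reduce the classification of complete $1$-types over $M$ to tracking, for each formula in $p$, a Boolean combination of polynomial equations $f(x)=0$, valuation inequalities $\nu(f(x))\geq\nu(g(x))$, and power predicates $P_n(f(x))$ for $f,g\in\Q[x]$. The algebraic case is handled first: if $p\vdash f(x)=0$ for some $\Q$-irreducible $f$, then $f$ has a root in $\K$, so by elementarity a root in $\Q$, and the finitely many $\Q$-roots of $f$ are separated by the $P_n$-predicates, giving case (i).

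For non-algebraic $p$ I would run a trichotomy on the map $a\mapsto\nu(x-a)$, $a\in\Q$, valued in the value group $\Gamma$ of $\K$: either (a) some $a\in\Q$ has $\nu(x-a)$ above all integers, or (b) $\nu(x-a)$ is below all integers for every $a\in\Q$, or (c) $\nu(x-a)\in\z$ for every $a\in\Q$. Cases (a) and (b) are incompatible by the ultrametric inequality (mixing them would force $\nu(a-b)\notin\z$ for some $a,b\in\Q$), and case (c) is ruled out for non-algebraic $p$ by completeness of $\Q$: if the values $\nu(x-a)$ are unbounded above, a Cauchy sequence of witnesses converges in $\Q$ to some $a^*$ with $\nu(x-a^*)=\infty$, forcing $p$ algebraic; if they are uniformly bounded by some $N\in\z$, covering $\Q$ by $p^{-N-1}$-balls gives by elementarity some $a\in\Q$ with $\nu(x-a)\geq N+1$, contradiction. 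In case (a) the standard part $a$ is unique by the ultrametric, $y:=x-a$ is infinitesimal, and using Hensel's lemma (any $1+\varepsilon$ with $\varepsilon$ infinitesimal is an $n$-th power for every $n$, hence lies in $\Gm^0$) the type of $y$ is determined by the $\Gm^0$-coset $C\ni y$: for $f\in\Q[x]$ write $f(a+y)=y^k g(y)$ with $g(0)\neq 0$ and note $g(y)/g(0)\in\Gm^0$, so both $\nu(f(a+y))$ and the $P_n$-class of $f(a+y)$ are computable from $C$. This yields case (ii), and the substitution $y=1/x$ handles case (iii) by an analogous computation.

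For (iv) and (v) I would invoke Fact~\ref{f-generic-type-has-bounded-orbit}(iii): a global type is $f$-generic iff its $G(\K)$-orbit is bounded. For $G=\Ga$, a realized global type $\tp(b/\K)$ has $|\K|$ many distinct additive translates, and any heir of $p_{a,C}$ with $a\in\Q$ has orbit containing the heir of ``$x$ infinitesimally close to $a+g$'' for every $g\in\K$, both unbounded. By contrast, additive translation sends a heir of $p_{\infty,C}$ to a heir of $p_{\infty,C'}$ for some coset $C'$ (since $\nu(x-g)=\nu(x)$ whenever $\nu(x)<\nu(g)$, and the cosets of $\Gm^0$ in $\K^\times$ form a bounded-index family by elementarity), so the orbit is bounded; dp-minimality (Fact~\ref{Q-p-is-dp-minimal}) together with Simon's dichotomy for dp-rank $1$ invariant types and Fact~\ref{no-generical-stable-types} then guarantees $p_{\infty,C}$ is definable rather than finitely satisfiable, so it has a unique heir. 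The parallel argument for $G=\Gm$ shows that multiplicative translation preserves both ``$\nu(x)$ above all integers'' and ``$\nu(x)$ below all integers,'' so heirs of both $p_{0,C}$ and $p_{\infty,C}$ are $f$-generic, while heirs of $p_{a,C}$ with $a\in\Q^\times$ have unbounded orbit because multiplicative translation moves $a$ through $\K^\times$.

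The main obstacles I expect are (A) ruling out case (c) of the trichotomy, which requires carefully combining completeness of $\Q$ with saturation of $\K$, and (B) verifying the bounded-orbit claim for heirs of $p_{\infty,C}$ (and $p_{0,C}$ in the multiplicative case), which reduces to the boundedness of $|\K^\times/\Gm^0|$ together with a coset-level computation of how group translation permutes cosets.
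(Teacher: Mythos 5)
This statement is imported by the paper as a Fact from \cite{PPY}; no proof is given in the text, so there is no in-paper argument to compare yours against, and I will judge the proposal on its own terms. Your treatment of (i)--(iii) is essentially the standard argument and is sound: Macintyre's quantifier elimination, the trichotomy on the position of $\nu(x-a)$ relative to $\z$ (using that $\z$ is convex in $\Gamma$), completeness of $\Q$ plus the finiteness of $\V/p^k\V$ to exclude the middle case, and the Taylor-expansion/Hensel computation showing that the $\Gm^0$-coset of the infinitesimal part determines all $P_n$- and valuation-data. Two small slips there: the Cauchy-sequence argument yields $\nu(x-a^*)>\z$, i.e.\ it lands you in case (a) rather than showing ``$p$ algebraic''; and the appeal to Simon's dichotomy to get definability of $p_{\infty,C}$ is misplaced, since that dichotomy concerns \emph{global} invariant types --- the definability of $p_{\infty,C}$ over $\Q$, hence the uniqueness of its heir, should instead be read off directly from the explicit description you have already computed.

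There is, however, a genuine gap in (iv) and (v): your case analysis tacitly assumes that every global $1$-type is either realized or an heir of one of the types $p_{a,C}$, $p_{\infty,C}$ over $\Q$. That is false. A global type whose restriction to $\Q$ is $p_{\infty,C}$ may, for instance, pin $\nu(x)$ to a fixed nonstandard $\gamma_0\in\Gamma$ with $\gamma_0<\z$, or realize some other cut of $\Gamma$, or concentrate on a ball $\nu(x-b)\geq\gamma$ with $b\in\K\setminus\Q$; such types are not heirs of anything over $\Q$, and they are precisely the types you must show are \emph{not} $f$-generic in order to obtain the ``only if'' direction. Your bounded-orbit computation never touches them. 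The repair is within reach of your own method: show that any global type of $\Ga$ containing a formula $\nu(x-b)\geq\gamma$ with $b\in\K$, $\gamma\in\Gamma$ has unboundedly many pairwise inconsistent additive translates (translate by $g$ with $\nu(g)<\gamma$), so that by Fact \ref{f-generic-type-has-bounded-orbit}(iii) an $f$-generic type must omit all such formulas; in particular it implies $\nu(x-b)<\gamma$ for all $b\in\K$ and $\gamma\in\Gamma$, and your quantifier-elimination computation then identifies it as the heir of some $p_{\infty,C}$. For $\Gm$ one must additionally rule out global types that pin $\nu(x)$ to an element or to a $\Gamma$-translation-noninvariant cut of $\Gamma$ (again by exhibiting unboundedly many disjoint multiplicative translates), leaving only $\nu(x)>\Gamma$ and $\nu(x)<\Gamma$, i.e.\ the heirs of the $p_{0,C}$ and $p_{\infty,C}$.
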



Part (ii) of the next remark is precisely Remark 2.2 (ii) from \cite{PPY}

\begin{Rmk}\label{Rmk-complete 1 types over Qp}
\begin{enumerate}
  \item [(i)]  Let $q(x)\in S_1(\K)$ be a $\emptyset$-definable $1$-type. By Fact \ref{complete 1 types over Qp}, we see that  if $q$ is NOT infinitesimally
close to any point of $\Q$ over $\Q$, then $q$ is a global heir of some $p_{\infty, C}$.
  \item  [(ii)]
  Conversely, any global complete $1$-type extending the partial type $\nu(x)<\Gamma$ is an heir of some $p_{\infty, C}$, and hence definable over $\Q$.
\end{enumerate}

\end{Rmk}



\begin{Lemma}\label{limit-point-is-definable-f-generic-type}
Suppose that $G$ is a one-dimensional group definable over $\Q$.  Let
$f: \Z\backslash\{0\}\longrightarrow
G(\Q)$ be a definable continuous function such that
$\displaystyle\lim_{x \to 0}f(x)\notin G(\Q)$, and
$\alpha\in \bar \K\succ \K$ such that $\nu(\alpha)>\Gamma$. Then $\tp(f(\alpha)/\K)$ is a global
definable $f$-generic type of $G$.
\end{Lemma}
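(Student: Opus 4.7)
The plan is to verify that $p := \tp(f(\alpha)/\K)$ satisfies both constituent properties of a definable $f$-generic type: $\Q$-definability and $f$-genericity.

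\emph{Definability.} Starting from $\nu(\alpha) > \Gamma$, we have $\nu(1/\alpha) < \Gamma$, so by Remark \ref{Rmk-complete 1 types over Qp}(ii), $\tp(1/\alpha/\K)$ is an heir of some $p_{\infty,C}$ and is $\Q$-definable. Pulling back through the $\Q$-definable map $y \mapsto 1/y$ shows $\tp(\alpha/\K)$ is $\Q$-definable; pushing forward through the $\Q$-definable function $f$ shows $p$ is $\Q$-definable.

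\emph{$f$-genericity.} The key geometric input is that $f(\alpha)$ is not infinitesimally close to any $a \in G(\K)$. For suppose it were; then for every $N \in \Z$, the $\K$-definable statement asserting the existence of $x \in \Z\setminus\{0\}$ with $\nu(x) > N$ and $\nu(f(x) - a) > N$ holds in $\bar \K$ (witnessed by $\alpha$), hence in $\K$ by $\K \prec \bar\K$, and hence in $\Q$ by $\Q \prec \K$. Varying $N$ in $\Z$ produces a $\Q$-sequence $x_N \to 0$ along which $f(x_N) \to a$; since limits of $\Q$-definable continuous functions are $\Q$-definable points, $a \in G(\Q)$, contradicting the hypothesis. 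A parallel argument (ruling out local constancy near $0$) shows $f(\alpha) \notin \acl(\K)$, so $p$ is non-algebraic of algebraic dimension $1$.

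Now the hypothesis forces $G(\Q)$ to be non-compact (a continuous function into a compact space would have limit there), so by Fact \ref{fsg=compact} $G$ is not $fsg$; by the preceding Lemma, $G$ is $dfg$, giving $G^{00} = G^0$ via Fact \ref{dfg-implies-G0=G00}. For a one-dimensional $\Q$-definable group, local chart analysis reducing to the classification of $f$-generic $1$-types on $\Ga$ and $\Gm$ in Fact \ref{complete 1 types over Qp}(iv)-(v) shows that the $\Q$-definable ``boundary'' types (those avoiding all $\K$-infinitesimal neighborhoods of $\K$-points) are stabilized by $G^{00}$. Applied to $p$, this gives bounded $G$-orbit, and Fact \ref{f-generic-type-has-bounded-orbit}(iii) then delivers $f$-genericity.

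\emph{Main obstacle.} The delicate step is the very last one: upgrading the geometric ``boundary'' behavior of $p$ to $G^{00}$-invariance for a general one-dimensional $\Q$-definable group, via local coordinates reducing to the $\Ga$/$\Gm$ classification, without invoking the structure theorem for $dfg$ groups that is itself the main target of the paper. The definability part is routine; the verification that the specific ``limit-at-boundary'' type is $f$-generic, and not merely $\Q$-definable, is where the argument has to be done carefully.
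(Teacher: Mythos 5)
Your definability argument is fine and agrees with the paper's (the paper simply notes that $\tp(\alpha/\K)$ is $\emptyset$-definable and pushes forward through $f$). The problem is in the $f$-genericity half, and it is exactly where you yourself flag the ``main obstacle'': the step ``local chart analysis reducing to the classification of $f$-generic $1$-types on $\Ga$ and $\Gm$'' is not an argument. A definable chart on a one-dimensional definable group is a homeomorphism onto an open subset of $\K$, but it is not a group homomorphism, so it does not transport the group action, the translates of $p$, or the notion of $f$-generic/$G^{00}$-invariant type; Fact \ref{complete 1 types over Qp}(iv)--(v) classifies $f$-generic types of the \emph{groups} $\Ga$ and $\Gm$, and nothing in a chart lets you apply that classification to an arbitrary one-dimensional $G$. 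Knowing that $G$ is $dfg$ (which you correctly deduce) only gives you \emph{some} definable $f$-generic type, not that your particular boundary type $\tp(f(\alpha)/\K)$ is one.

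The missing ingredient is Lemma \ref{f-to-1} (ultimately Fact \ref{G-finite-to-one-H}, the Hrushovski--Pillay/MOS result): there is a finite-index subgroup $U\leq G$ and a definable finite-to-one group homomorphism $\pi:U(\Q)\to H(\Q)$ onto an open subgroup $V(\Q)$ of a one-dimensional \emph{algebraic} group $H$. The paper first checks that $\beta=\pi(f(\alpha))$ is still not infinitesimally close to any point of $H(\Q)$ (using that $V(\Q)$ is clopen), then uses the trichotomy for one-dimensional algebraic groups over $\Q$ --- $\Ga$, $\Gm$, or definably compact --- to rule out the compact case (every point of a definably compact group is infinitesimally close to its standard part) and to place itself in the situation where Remark \ref{Rmk-complete 1 types over Qp} applies, yielding that $\tp(\beta/\K)$ is $f$-generic in $H$. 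Finally the bounded $V(\K)$-orbit of $\tp(\beta/\K)$ is pulled back through the finite-to-one $\pi$ to give a bounded $U(\K)$-orbit of $\tp(f(\alpha)/\K)$, and Fact \ref{f-generic-type-has-bounded-orbit}(iii) finishes. Without this homomorphic reduction your argument does not close; with it, your preliminary observations (non-compactness, the boundary behaviour of $f(\alpha)$) are exactly the inputs the paper uses.
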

\begin{proof}
First let us check that the notation in the statement of the Lemma makes sense.  First the continuity of
$f$ is meant with respect to the $p$-adic topology on $\Z$ and the definable $p$-adic analytic group
topology on $G(\Q)$ from Section 1.4.  For any elementary extension $N = (K,+,\times,0)$ of $M =
(\Q,+,\times,0)$, $f(K)$ is a continuous definable over $\Q$ function from $\V\setminus\{0\}$ to $G(K)$, which we may also write as just $f$.  So for $\alpha\in \bar \K\succ \K$, nonzero such that $\nu(\alpha)>\Gamma$, $f(\alpha) \in G(\bar K)$.

Note that $\tp(\alpha/\K)$ is $\emptyset$-definable. Since $\displaystyle\lim_{x \to 0}f(x)\notin G(\Q)$, $f(\alpha)$ is not infinitesimally close (over $\Q$) to any point of $G(\Q)$. Moreover, since $G(\Q)$ has a definable manifold structure, for any $g\in G(\Q)$, $gf(\alpha)$ is not infinitesimally close to any point of $G(\Q)$ over $\Q$.

By Lemma \ref{f-to-1}, let $H(\Q)$ be a one-dimensional algebraic group, $ U(\Q)\leq G(\Q)$  a
$\Q$-definable subgroup of finite index,  and  $\pi:U(\Q)\ra H(\Q)$ a definable finite-to-one homomorphism. Let $V(\Q)=\pi(U(\Q))$. Since $U(\Q)$ is generic, we may assume that $f(\alpha)\in U(\bar \K)$. Let $\beta=\pi(f(\alpha))$. Then, for any $g\in V(\Q)$, $g\beta$ is not infinitesimally close to any point of $V(\Q)$ over $\Q$.

\begin{Claim}
$\beta$ is not infinitesimally close to any point of $H(\Q)$.
\end{Claim}
\begin{proof}
As any open subgroup is clopen, we see that $V(\Q)$ is a clopen subgroup of $H(\Q)$.
Now suppose for a contradiction that $\beta$ is infinitesimally close to $h\in H(\Q)$ over $\Q$\blue{, then} $h\in \cl(V(\Q))=V(\Q)$. A contradiction.
\end{proof}
Now $H(\Q)$ is a $p$-adic algebraic group (as defined in Section 1.5). So $H(\Q)$ is  definably isomorphic to $(\Q,+)$, or $(\Q^*,\times)$, or a definably compact group $C(\Q)$ (which is either an anisotropic group or an elliptic curve). But any point in $C(\bar \K)$ is infinitesimally close to its standard part in $C(\Q)$ over $\Q$. So $H(\Q)$ is isomorphic to $(\Q,+)$, or $(\Q^*,\times)$.

As  $\beta$ is not infinitesimally close to any point of $H(\Q)$, and $\tp(\beta/\K)$ is definable over $\emptyset$,
we conclude by Remark \ref{Rmk-complete 1 types over Qp} and \ref{complete 1 types over Qp}  that  $tp(\beta/\K)$ is the heir of  some $p_{\infty,C}$ in the case $H = (\Q,+)$, and the heir of some $p_{\infty,C}$ or some $p_{0,\C}$ in the case that $H = (\Q^{*},\times)$.  Either way $tp(\beta/\K)$ is a global definable (strong) $f$-generic of $H$.
So the $V(\K)$-orbit of $\tp(\beta/\K)$ is bounded. Since $\pi: U(\K)\longrightarrow V(\K)$ is a finite-to-one homomorphism,  the $U(\K)$-orbit of $\tp(f(\alpha)/\K)$ is bounded. This implies that $\tp(f(\alpha)/\K)$ is a global $f$-generic type of $G$.
\end{proof}


\begin{Prop}\label{dfg-subgroup-has-finite-index}
Let $G$ be a one-dimensional $\Q$-definable group. If $H\sq G$ is an open $dfg$ subgroup of $G$ definable over $\Q$,  then $H$ has finite index in $G$.
\end{Prop}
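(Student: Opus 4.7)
The plan is to construct a global $\Q$-definable $f$-generic type of $G$ that is concentrated on $H$, and then to use its $G^{00}$-invariance to force $G^{00}\subseteq H$. A standard NIP argument will then deliver $[G:H]<\infty$.

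First I would produce the type. Since $H$ has $dfg$, Corollary~\ref{dfg-implies-non-compact} says $H$ is not definably compact; because $H$ is $\Q$-definable, this means $H(\Q)$ is not compact as a $p$-adic Lie group. Unraveling the definition of definable compactness recalled before Fact~\ref{fsg-G00=G0}, there is a $\Q$-definable continuous function $f\colon \Z\setminus\{0\}\to H(\Q)$ with $\lim_{x\to 0} f(x)\notin H(\Q)$. Because $H$ is open in $G$, $H(\Q)$ is a clopen subgroup of $G(\Q)$, so any limit of $f$ that lay in $G(\Q)$ would automatically lie in the closed set $H(\Q)$; hence $\lim_{x\to 0} f(x)\notin G(\Q)$. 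Applying Lemma~\ref{limit-point-is-definable-f-generic-type} to $G$ and $f$, for any $\alpha$ in a proper elementary extension with $\nu(\alpha)>\Gamma$ we get that $p=\tp(f(\alpha)/\K)$ is a global $\Q$-definable $f$-generic type of $G$. Since $f(\alpha)\in H(\bar\K)$, the formula $H(x)$ belongs to $p$, so $p$ concentrates on $H$.

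Next I would push $G^{00}$ into $H$. By Fact~\ref{f-generic-type-has-bounded-orbit}(iv), $p$ is $G^{00}$-invariant. Fix $g\in G^{00}$. Then $gp=p$, and since $p\ni H(x)$ we also have $gp\ni H(g^{-1}x)$, i.e.\ the formula ``$x\in gH$'' lies in $p$. Consistency of $p$ then forces $H\cap gH\neq\emptyset$, whence $g\in H$. Therefore $G^{00}\subseteq H$. Since the ambient theory is NIP, $G/G^{00}$ is a compact Hausdorff group in the logic topology; as $H$ is $\Q$-definable and contains $G^{00}$, its image $H/G^{00}$ is clopen there, so has finite index by compactness of $G/G^{00}$. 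Hence $[G:H]<\infty$.

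The main obstacle is the first step --- producing a definable $f$-generic type of $G$ concentrated on $H$. The key point is the clopenness of $H(\Q)$ inside $G(\Q)$, which upgrades the failure of $\lim f$ to lie in $H(\Q)$ (guaranteed by the non-compactness coming from $dfg$) into failure to lie in $G(\Q)$, so that Lemma~\ref{limit-point-is-definable-f-generic-type} applies to $G$ itself rather than merely to $H$. Once this is in place, the rest of the argument is straightforward topological-dynamics bookkeeping.
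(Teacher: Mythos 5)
Your proposal is correct and follows essentially the same route as the paper: produce a $\Q$-definable continuous $f\colon \Z\setminus\{0\}\to H(\Q)$ whose limit fails to lie even in $G(\Q)$ (using that $H(\Q)$ is closed in $G(\Q)$, which is exactly what the paper's Claim establishes by contraposition), apply Lemma~\ref{limit-point-is-definable-f-generic-type} to get a definable $f$-generic type of $G$ concentrated on $H$, and then use $G^{00}$-invariance plus compactness to conclude $G^{00}\subseteq H$ and $[G:H]<\infty$. The only difference is that you spell out the clopenness argument and the $G^{00}\subseteq H$ step in more detail than the paper does.
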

\begin{proof}
By Corollary \ref{dfg-implies-non-compact}, there is a  definable continuous function $f: \Z\backslash\{0\}\rightarrow H(\Q)$  such that $\displaystyle\lim_{x \to 0}f(x)\notin H(\Q)$. Moreover, we claim that
\begin{Claim}
There is  a definable continuous function $f: \Z\backslash\{0\}\rightarrow H(\Q)$  such that $\displaystyle\lim_{x \to 0}f(x)\notin G(\Q)$.
\end{Claim}
\begin{proof}
Suppose NOT. Then $H(\Q)=\cl(H(\Q))$ is definably compact.  This contradicts  Corollary \ref{dfg-implies-non-compact}.
\end{proof}
Let $f$ be a definable continuous function $f: \Z\backslash\{0\}\rightarrow H(\Q)$  such that $\displaystyle\lim_{x \to 0}f(x)\notin G(\Q)$, and $\alpha\in \bar \K\succ \K$ such that $v(\alpha)>\Gamma$. By Lemma \ref{limit-point-is-definable-f-generic-type}, $\tp(f(\alpha)/\K)$ is a global definable $f$-generic type of both $G$ and $H$. Now $\tp(f(\alpha)/\K)\in S_H(\K)$ is $G^{00}$-invariant. So $G^{00}\sq H$. By compactness, $H$ has finite index.
\end{proof}

\begin{Cor}
Let $G$ be a one-dimensional  $dfg$  group definable over $\Q$. Then $G^{00}=\cap_{n\in \N^+}nG$.
\end{Cor}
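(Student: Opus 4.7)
The plan is to identify $\bigcap_{n\in\N^+} nG$ with $G^0$, since $G^{00}=G^0$ by Fact \ref{dfg-implies-G0=G00}.

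First I would reduce to the case that $G$ is abelian. By Fact \ref{one-dim-definably-amenable}, $G$ is abelian-by-finite, so I can pick a normal abelian finite-index subgroup $A\leq G$. One checks $A$ inherits $dfg$ (a global $dfg$ type $p$ of $G$ concentrates on some coset $g_0A$, and $g_0^{-1}p$ is then a $dfg$ type of $A$), and $A^{00}=G^{00}$ since $[G:A]$ is finite. Writing $k=[G:A]$, Lagrange in $G/A$ gives $g^k\in A$ for every $g\in G$, so $g^{km}=(g^k)^m\in mA$ for every $m$; this yields $\bigcap_n nG\subseteq \bigcap_m mA$, and the reverse inclusion is immediate from $A\leq G$. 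Thus it suffices to prove $A^{00}=\bigcap_m mA$, and I may assume henceforth that $G$ itself is abelian.

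The main step is to show $[G:nG]$ is finite for each $n$. I would invoke Corollary \ref{fsg-dfg-f-to-1} to obtain a $\Q$-definable finite-index subgroup $\bar A\leq G$, a finite normal subgroup $\bar A_0\leq \bar A$, and a $\Q$-definable isomorphism $\bar A/\bar A_0\cong \bar U$ with $\bar U$ an open subgroup of a one-dimensional algebraic group $H$ over $\Q$. Here $\bar A$ inherits $dfg$ from $G$, and its quotient $\bar U$ is again $dfg$ (the pushforward of a $dfg$ type along a definable surjection remains $M$-definable with bounded orbit). By Corollary \ref{dfg-implies-non-compact}, $H$ cannot be definably compact, so $H\in\{\Ga,\Gm\}$. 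Proposition \ref{dfg-subgroup-has-finite-index} applied to the one-dimensional $H$ and its open $dfg$ subgroup $\bar U$ forces $[H:\bar U]$ finite; in particular $\bar U=\Ga$ when $H=\Ga$ since $\Ga$ is divisible. Either way $[\bar U:n\bar U]$ is finite (trivially equal to $1$ for $\Ga$; finite for $\Gm$ by Macintyre's quantifier elimination, since $\Gm/n\Gm$ is encoded by the predicate $P_n$), hence $[\bar A:n\bar A]$ is finite via the finite kernel $\bar A_0$, and therefore $[G:nG]\leq [G:\bar A]\cdot [\bar A:n\bar A]$ is finite.

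Once $[G:nG]$ is known to be finite, the conclusion is clean. Each $nG$ is a definable finite-index subgroup of the abelian $G$, so contains $G^0$, giving $G^{00}=G^0\subseteq \bigcap_n nG$. Conversely, for any definable finite-index subgroup $H\leq G$, Lagrange in the abelian quotient $G/H$ yields $g^{[G:H]}\in H$ for all $g$, so $[G:H]\cdot G\subseteq H$ and consequently $\bigcap_n nG\subseteq H$; intersecting over all such $H$ gives $\bigcap_n nG\subseteq G^0=G^{00}$. The main obstacle is the finiteness of $[G:nG]$, which rests on Proposition \ref{dfg-subgroup-has-finite-index} to guarantee that $\bar U$ is finite-index in $\Ga$ or $\Gm$ — an arbitrary open subgroup of $\Gm(\K)$ need not have finite index, so the $dfg$ hypothesis is truly essential at this point.
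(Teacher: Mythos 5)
Your proof is correct, and its overall skeleton matches the paper's: reduce to the abelian case, show each $nG$ has finite index, deduce $\bigcap_{n}nG=G^{0}$ by elementary abelian group theory, and finish with $G^{0}=G^{00}$ from Fact \ref{dfg-implies-G0=G00}. The difference lies in how the finiteness of $[G:nG]$ is obtained. The paper pushes a definable $f$-generic type forward along $x\mapsto nx$ to see that $nG$ is itself an open $dfg$ subgroup of the one-dimensional group $G$, and then applies Proposition \ref{dfg-subgroup-has-finite-index} directly inside $G$. You instead pass through Corollary \ref{fsg-dfg-f-to-1} to a one-dimensional algebraic group $H$, rule out the definably compact case via Corollary \ref{dfg-implies-non-compact}, apply Proposition \ref{dfg-subgroup-has-finite-index} to the open $dfg$ image there, and then compute $[\Ga:n\Ga]=1$ and $[\Gm:P_n(\Gm)]<\infty$ explicitly before pulling the finiteness back through the finite kernel and the finite-index subgroup. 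The paper's route is shorter; yours is longer but has two small advantages: it handles the abelian-by-finite reduction honestly (the paper's ``we may assume $G$ is commutative'' leaves implicit the identification of $\bigcap_n nG$ for $G$ with the corresponding intersection for the abelian finite-index subgroup, and indeed $nG$ need not even be a subgroup before that reduction), and it sidesteps the need to check that $nG$ is open in $G$ (equivalently that the $n$-torsion of $G$ is finite), which the paper uses tacitly when invoking Proposition \ref{dfg-subgroup-has-finite-index} for $nG\leq G$.
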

\begin{proof}
By Fact \ref{one-dim-definably-amenable}, we may assume that $G$ is commutative. Consider the map $f_n: G\ra nG$ given by $x\mapsto nx$. Let $\tp(\alpha/\K)\in S_G(\K)$ be a definable $f$-generic type. Then there is a small submodel $M\prec \K$ such that every left $nG$-translate of $\tp(n\alpha/\K)$ is definable over $M$. So $nG$ is a $dfg$ group. By Proposition \ref{dfg-subgroup-has-finite-index}, $nG$ has finite index for each $n\in \N^+$. On the other hand, if $H\leq G$ is a definable subgroup of index $n$ with $n\in \N^+$\blue{, then} $nG\leq H$. So $G^{0}=\cap_{n\in \N^+}nG$. By Fact \ref{dfg-implies-G0=G00}, we have $G^{00}=\cap_{n\in \N^+}nG$ as required.
\end{proof}

Recall that a definable group is definably connected if it has no proper finite index definable subgroups. We will call a definable group $G$ is definably totally disconnected if  every finite index definable subgroup of $G$ is NOT definably connected. Namely, a definably totally disconnected group has no minimal  finite index definable subgroups.

Note that $\Ga(\K)$  is definably connected since $\Ga(\K)^0=\Ga(\K)$, while $\Gm(\K)$ is definably totally disconnected since $\Gm(\K)^0=\bigcap_{n\in \N^+}P_n(\K^*)$ (see \cite{PPY}).

\begin{Prop}
Let $G$ be a one-dimensional  $dfg$  group definable over $\Q$.
\begin{enumerate}
  \item If $G(\Q)$ is definably connected, then there is a finite normal subgroup $A_0$ such that $G(\Q)/A_0$ is definably isomorphic to $(\Q,+)$;
  \item If $G(\Q)$ is definably totally disconnected, then there is a definable subgroup $A\leq G$ of finite index, and a finite normal subgroup $A_0\leq A(\Q)$ such that $A(\Q)/A_0$ is definably isomorphic to $(P_n(\Q^*),\times)$
\end{enumerate}
\end{Prop}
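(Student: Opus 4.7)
The plan is to reduce to a one-dimensional $\Q$-algebraic group via Corollary \ref{fsg-dfg-f-to-1}, use non-compactness (Corollary \ref{dfg-implies-non-compact}) to pin the algebraic group down to $\Ga$ or $\Gm$, and finally split on the connectedness hypothesis.

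First, apply Corollary \ref{fsg-dfg-f-to-1} to obtain a finite index definable subgroup $A\leq G(\Q)$, a finite subgroup $A_0\sq A$, and a definable isomorphism $\phi\colon A/A_0\to U$ onto a definable open subgroup $U$ of $H(\Q)$ for some one-dimensional $\Q$-algebraic group $H$. One checks that $dfg$ passes from $G$ to $A$ (translate a global $dfg$ type of $G$ by a coset representative so that it concentrates on $A$) and then from $A$ to $A/A_0$ (push forward along the definable quotient), so $U$ is $dfg$. Proposition \ref{dfg-subgroup-has-finite-index} then gives that $U$ has finite index in $H(\Q)$, and Corollary \ref{dfg-implies-non-compact} rules out $U$, hence $H$, being definably compact. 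Since the one-dimensional $\Q$-algebraic groups are $\Ga$, $\Gm$, anisotropic tori and elliptic curves --- the last two being definably compact --- we conclude $H\in\{\Ga,\Gm\}$.

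For (1), $G(\Q)$ definably connected forces $A=G(\Q)$, and quotienting by the finite $A_0$ preserves connectedness, so $A/A_0$ is definably connected. On the other hand, any finite index definable subgroup $B$ of $\Gm$ strictly contains $\Gm^0=\bigcap_n P_n(\K^*)$ (since $B$ is definable while $\Gm^0$ is only type-definable), so some $P_n$ fails to be contained in $B$, yielding the proper finite index subgroup $B\cap P_n$; thus no such $B$ is definably connected, which rules out $H=\Gm$. Hence $H=\Ga$, and since $\Ga^0=\Ga$ the only finite index subgroup of $\Ga$ is $\Ga$ itself, giving $G(\Q)/A_0\cong(\Q,+)$.

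For (2), total disconnectedness descends from $G(\Q)$ to $A$ because any finite index definable subgroup of $A$ is finite index in $G(\Q)$. If $H=\Ga$ were possible, then $U=\Ga$ and $A/A_0\cong\Ga$ would have no proper finite index subgroup; so for every finite index $B\leq A$ the product $B\cdot A_0$ equals $A$, forcing $[A:B]\leq|A_0|$. This uniform bound on the index precludes any infinite strictly descending chain of finite index subgroups of $A$, contradicting total disconnectedness. Hence $H=\Gm$, and $U$ is a finite index subgroup of $\Gm=(\Q^*,\times)$. Choose $n$ with $P_n(\Q^*)\leq U$ and replace $A$ by $A':=\pi^{-1}(\phi^{-1}(P_n(\Q^*)))$, where $\pi\colon A\to A/A_0$; this is still a finite index subgroup of $G(\Q)$, and $A'/A_0$ is definably isomorphic to $P_n(\Q^*)$. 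The main subtlety is arranging this final shrinking step so that the image lands exactly on some $P_n(\Q^*)$, together with the verification that every finite index definable subgroup of $\Gm$ strictly contains $\Gm^0$; both rely on the concrete description of $\Q^*$ recorded in \cite{PPY}.
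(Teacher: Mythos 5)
Your proof is correct and follows essentially the same route as the paper's: reduce via the finite-to-one map from Corollary \ref{fsg-dfg-f-to-1} to a finite-index $dfg$ subgroup of a one-dimensional algebraic group, use Proposition \ref{dfg-subgroup-has-finite-index} and non-compactness to force $H\in\{\Ga,\Gm\}$, and then decide between them by (total dis)connectedness, shrinking to a preimage of some $P_n(\Q^*)$ in case (2). One containment in part (1) is stated backwards — what you need is that $B$ is not contained in some $P_n$ (which follows since the finite-index $B$ cannot lie in $\Gm^0=\bigcap_n P_n$), so that $B\cap P_n$ is proper in $B$ — but that is exactly what your argument actually uses, so this is only a wording slip.
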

\begin{proof}
By Lemma \ref{f-to-1}, there are a one-dimensional algebraic group $H$ and a definable finite-to-one group homomorphism $f$ from $A(\Q)$ to $H(\Q)$, where $A\leq G$ is a finite index subgroup definable over $\Q$. Clearly, $A$ has $dfg$. Let $\tp(\alpha/\K)$ be a global definable $f$-generic type of $A$. Then $\tp(f(\alpha)/\K)$ is also a  global definable $f$-generic type of  $\im(f)$. As $\im(f)$ has $dfg$,  $\im(f)\leq H$ has finite index by Proposition  \ref{dfg-subgroup-has-finite-index}. So $H(\Q)$ is $dfg$, and hence is either $(\Q,+)$ or $(\Q^*,\times)$.

If $G(\Q)$ is definably connected.   Then $\im(f)$ is definably connected. So $H(\Q)$ has to be $(\Q,+)$, and hence is $\im(f)$.

If $G(\Q)$ is totally disconnected, then $\im(f)$ is also totally disconnected. So $H(\Q)$ has to be $(\Q^*,\times)$, and hence  $\im(f)$   contains $(P_n(\Q^*),\times)$ for some $n$. Replace $A(\Q)$ by $f^{-1}(P_n(\Q^*))$ if necessary.
\end{proof}

\subsection{Algebraic groups with open $dfg$ subgroups}
The following will be used later:
\begin{Fact}\cite{O-P}\label{open subset}
Let $X\sq \K^n$ be open, definable and defined over $\Q$. Let $X=Y_1\cup Y_2$, where the $Y_i$ are definable in $\K$. Then one of the $Y_i$ contains an open $\Q$-definable subset.
\end{Fact}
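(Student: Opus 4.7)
My plan is to combine $p$-adic cell decomposition with the elementarity $\Q \prec \K$. Since $X$ is open in $\K^n$ and $X = Y_1 \cup Y_2$, one of $Y_1, Y_2$ must have topological dimension $n$; say $\dimm Y_1 = n$. By $p$-adic cell decomposition applied to $Y_1, Y_2$ (refined so that each cell lies in one of them), some top-dimensional cell is contained in $Y_1$, and hence $Y_1$ contains a $\K$-definable open ball $B(\alpha, r) \sq X$.

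Next I would transfer to $\Q$-parameters. Let $\bar d \in \K^m$ be the parameter tuple defining $Y_1, Y_2$ over $\Q$, so $Y_i = \phi_i(\K, \bar d)$. The formula
\[
\Psi(\bar d') \;:\;\bigl(\forall x \in X,\, \phi_1(x, \bar d') \lor \phi_2(x, \bar d')\bigr) \;\wedge\; \bigl(\exists a\, \exists r,\; B(a,r) \sq X \cap \phi_1(\K, \bar d')\bigr)
\]
is first-order over $\Q$ (using the $\Q$-definability of $X$) and holds at $\bar d' = \bar d$. By $\Q \prec \K$, there exists $\bar d_0 \in \Q^m$ satisfying $\Psi$, with $\Q$-witnesses $a_0 \in \Q^n$ and $r_0 \in \Q^*$, yielding a $\Q$-definable open ball $U = B(a_0, r_0)$ contained in $X \cap \phi_1(\K, \bar d_0)$.

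The main obstacle is the parameter gap: $U \sq \phi_1(\K, \bar d_0)$ but we need $U \sq Y_1 = \phi_1(\K, \bar d)$. To close this gap I would iterate: apply the same reasoning to the pair $(Y_1 \cap U, Y_2 \cap U)$ covering $U$ in place of $X$, obtaining a nested sequence of shrinking $\Q$-definable balls, and terminate via a compactness/saturation argument exploiting the fixed first-order complexity of $\phi_1$. A cleaner alternative uses a global $\Q$-definable complete type $p$ concentrating on a $\Q$-definable open ball around a $\Q$-point of $X$; then $p$ contains $Y_1$ or $Y_2$, and the $\Q$-definability of $p$, together with its defining scheme at $\phi_i$, furnishes a $\Q$-parameter $\bar d_0$ with $\phi_i(\K, \bar d_0) \in p$, whereupon the genericity of $p$ locates an open $\Q$-definable ball inside $\phi_i(\K, \bar d_0) \cap Y_i$.

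I expect the bridging of this parameter gap to be the hard step; it is where the specific structure of $\Q$ (via Macintyre's quantifier elimination and the geometry of $\Q$-points in $\K$-definable families) must enter in an essential way.
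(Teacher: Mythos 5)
This statement is quoted in the paper from \cite{O-P} without proof, so I can only judge your argument on its own terms. You have correctly isolated the crux --- the parameter gap --- but neither of your repairs closes it, and in fact the overall strategy (pick the witnessing $Y_i$ as the one of full dimension, or as the one belonging to a $\Q$-definable generic type) selects the \emph{wrong} $Y_i$ in general. Concretely, take $n=1$, $X=\V(\K)$, fix $\gamma_0\in\Gamma$ with $\gamma_0>\z$ (possible by saturation), and set $Y_1=\{x\in X:\nu(x)>\gamma_0\}$, $Y_2=X\setminus Y_1$. Then $Y_1$ is itself a $\K$-definable open ball of dimension $1$, so it passes your cell-decomposition test; and every $\Q$-definable global $1$-type concentrating at $0$ (e.g.\ the heir of a $p_{0,C}$) contains $Y_1$, so your second repair also points at $Y_1$. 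Yet $Y_1$ contains no nonempty open $\Q$-definable set: by elementarity such a set must contain a ``standard'' ball $B(u,m)$ with $u\in\Q$, $m\in\z$, and every such ball contains a point of valuation in $\z$, hence a point outside $Y_1$. The witness here is $Y_2$, and nothing in your argument ever looks at it. The iterative repair fares no better: each stage reproduces the identical gap (a $\Q$-parameter $\bar d_0$ controlling $\phi_1(\K,\bar d_0)$ but not $\phi_1(\K,\bar d)$), and a strictly shrinking chain of $\Q$-definable balls need not stabilize on anything contained in either $Y_i$.

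The mechanism that actually makes the statement true is different. Decomposing $Y_1$ and $Y_2$ into cells over their parameters produces finitely many centres $c_j$ and coset conditions $x-c_j\in\lambda_jP_{N_j}$; on the other hand $X$ contains more than that many pairwise disjoint standard balls $B(u,m)$ with $u\in\Q^n$, $m\in\z$. One such ball avoids every $c_j$, and after shrinking it by a fixed standard amount each quantity $\nu(x-c_j)$ and each coset condition becomes constant on it; the whole (still $\Q$-definable) ball therefore lands in a single cell, hence in a single $Y_i$. In other words, the correct $Y_i$ is detected by which piece swallows an entire standard ball --- a pigeonhole on the standard model --- not by dimension, elementarity of the parameters, or definable types; that is where the specific geometry of $\Q$ enters, and it is the idea your proposal is missing.
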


\begin{Lemma}\label{dfg group has no compact parts}
Let $H$ be an algebraic group over $\Q$, and $G$ an open $\Q$-definable subgroup of $H(\K)$. Suppose $A$ is an algebraic subgroup of $H$, also defined over $\Q$. Let $G/A(\K)$ be the set of cosets $g/A(\K)$ for $g\in G$. Then $G/A(\K)$ infinite implies $G/A(\K)$ is not definably compact (equivalently $G(\Q)/A(\Q)$ is not compact.
\end{Lemma}
\begin{proof}
We start with some explanations, partly suggested by the referee. First the quotient space $H(\Q)/A(\Q)$
(a homogeneous space for $H(\Q)$) can be viewed as a  definable (rather than interpretable) set in $M$, and moreover a definable $p$-adic manifold. This is by the last paragraph of Section 1.5. So $G(\Q)/A(\Q)$, as defined in the statement of the lemma
will be a definable submanifold of $H(\Q)/A(\Q)$, and a homogeneous space for $G(\Q)$, and we call it $C(\Q)$. So $C = C(\K)$ is a definable (over $\Q$) manifold in the sense of Section 1.4, and also a homogeneous space for $G = G(\K)$.
Note also that the lemma we are proving is a natural generalization of \ref{dfg-implies-non-compact}
to homogeneous spaces (transitive flows).

Let $\pi: G\to C$ be the natural $\Q$-definable projection map.

So we have the action of $G = G(\M)$ on $C = C(\M)$ and hence also an action on the type space $S_{C}(\M)$ of global types concentrating on $C$.  We will call a definable subset $X$ of $C$ {\em generic} if finitely many $G$-translates of $X$ cover $C$, and likewise a type $p\in S_{C}(\M)$ generic if all formulas in $p$ are generic.



We first  show that for any $\Q$-definable open subset $O(\M)$ of $G(\M)$, $\pi(O(\M))$ contains a $\Q$-definable open  subset of $C(\M)$. Take $g\in O(\M)$ such that $\dim(g/\Q)=\dim(G)$\blue{, then} we have $g\in \pi^{-1}(\pi(g))$ and \[
\dim(\pi^{-1}(\pi(g)))=\dim(gA)=\dim(A).\]
So $\dim(g/\Q,\pi(g))\leq \dim(A)$, and thus
\[
\dim(\pi(g)/\Q)=\dim(g,\pi(g)/\Q)-\dim(g/\Q,\pi(g))\geq \dim(G)-\dim(A).
\]
As $\pi(g)\in\pi(O(\M))$, we see that
\[
\dim(O(\M))\geq \dim(\pi(g)/\Q)\geq  \dim(G)-\dim(A)=\dim(H/A)\geq \dim(C).
\] So $\pi(O(\M))$ contains a $\Q$-definable open subset of $C(\M)$ as required.

\vspace{5mm}
\noindent
Now suppose for a contradiction that $C(\Q)$ is definably compact. Then $C(\Q)$ is compact  as a topological space (as mentioned earlier, definable compactness and compactness agree for definable $p$-adic manifolds).

\begin{Claim}
$S_C(\M)$ has a generic type which is moreover finitely satisfiable in $\Q$ (i.e. in $C(\Q)$).
\end{Claim}
\begin{proof}
Let $p(x)$ be any type in $S_{C}(M)$ which contains only open definable sets, and let $p'(x)\in S_{C}(\M)$ be a coheir of $p$. We will show that $p'$ is a generic type of $S_{C}(\M)$ as defined above.
So let $X$ be a definable set in $p'$. By definability of types over $\Q$, $X\cap C(\Q) = Y$ is a definable set in the structure $M$. Note that $Y$ has interior in the sense of the space $C(\Q)$, because otherwise $Y\notin p$, so $X\wedge\neg(Y(\M))\in p'$, but the latter definable set does not meet $C(\Q)$,
contradicting $p'$ being finitely satisfiable in $M$. We have shown that $Y$ has interior in $C(\Q)$. It follows that $Y(\M)$ contains a $\Q$-definable open set.

Now consider  $Y(\M) = (X\cap Y(\M)) \cup ((\neg X)\cap Y(\M))$. Note that $(\neg(X)\cap Y(\M))\cap C(\Q)) = \emptyset$ so in particular does not contain any $\Q$-definable open set. It follows from
\ref{open subset} that $X\cap Y(\M)$ contains a $\Q$-definable open set, so in particular $X$ contains a $\Q$-definable open set $Z$. As $C(\Q)$ is compact, finitely many $G(\Q)$ translates of $Z(\Q)$ cover $C(\Q)$. So  finitely many $G$-translates of $Z$ cover $C$, so finitely many $G$-translates of $X$ cover $C$. So $X$ is generic, and as $X$ was an arbitrary definable set in $p'$ we have shown that $p'$ is generic. This completes the proof of the Claim.    \end{proof}

Let us now complete the proof of the lemma. Note that we have a $G(\M)$-flow map from $(G(\M), S_{G}(\M))$ to $(G(\M), S_{C}(\M))$ induced by $\pi$.  By our assumptions, let $r\in S_{G}(\M)$ be a definable $f$-generic type. By Lemma 1.15 of \cite{P-Y} (and its proof) the orbit $(G(\M)\cdot r$ is closed, hence a minimal subflow of $S_{G}(\M)$. Then $\pi(G(\M)\cdot r)$ is a minimal subflow of $S_{C}(\M)$. But $S_{C}(\M)$ has generic types, by the Claim, hence as is well-known (see \cite{Newelski}) has a unique minimal subflow.  By the Claim, let $p\in S_{C}(\M)$ be generic and finitely satisfiable in $C(\Q)$. But then $p$ is of the form $\pi(q)$ for some $q\in G(\M)\cdot r$. As $q$ is definable, so is $\pi(q) = p$. Then $p$ is both definable and finitely satisfiable, which contradicts distality.     \end{proof}

\begin{Rmk}
Note that if $G$ and $H$ are definable groups, and  $G$ has $dfg$, and $f: G\lra H$ is a definable onto homomorphism, then $H$ has $dfg$.
\end{Rmk}


At this point we will need some of the basic theory of linear algebraic groups. The notation in the literature (such as \cite{J.S. Milne}) for (linear) algebraic groups over a field $k$ is consistent with our notation from Section 1.5 where $k = \Q$. See Chapter 14 of \cite{J.S. Milne} for the notions of unipotent elements in a linear algebraic group $G$ and unipotent algebraic subgroups $G$.
An algebraic group $G$ over $\Q$ is {\em trigonalizable} over $\Q$ if it is isomorphic over $\Q$ to a group of upper triangular matrices in some $GL_{n}$. An algebraic torus over $\Q$ is split over $\Q$ it is isomorphic over $\Q$ to a product of copies of the multiplicative group.
A connected linear solvable algebraic group $G$ over $\Q$ is {\em split} over $\Q$ if $G$ admits a subnormal series of algebraic subgroups over $\Q$ whose quotients are isomorphic over $\Q$ to the additive or multiplicative group.
A connected semisimple algebraic group $G$ over $\Q$ is said to be {\em isotropic} if contains a $\Q$-split algebraic torus defined over $\Q$. Let $G$ be a reductive algebraic group over $\Q$. A parabolic subgroup is a connected subgroup $P$ of $G$ such that the quotient variety $G/P$ is complete.

We summarise the various facts from the literature that we need (specialized to the case $k = \Q$).
\begin{Fact}\label{structure theorem}
\begin{enumerate}
  \item [(i)] (see Theorem 16.33 of \cite{J.S. Milne}.) Let $G$ be a connected linear solvable algebraic group over $\Q$. Then $G$ has a maximal unipotent algebraic subgroup, $G_{u}$ which is normal and over $\Q$ and $G/G_{u}$ is an algebraic torus (over $\Q$).
  \item   [(ii)] (see Proposition 16.52 of \cite{J.S. Milne}.)
  Any connected solvable linear algebraic group over $\Q$ which is split over $\Q$ is trigonalizable over $\Q$.
  \item  [(iii)]   (see Corollaire 4.17 of \cite{Borel-Tits}) Let $G$ be a (connected) semisimple algebraic group over $\Q$. Then $G$ is $\Q$-isotropic if and only if $G$ has a proper parabolic subgroup $P$ defined over $\Q$.
  \item [(iv)] (Theorem 3.1 of \cite{V. Platonov-A. Rapinchuk}.)  Suppose $G$ is a connected semisimple algebraic group over $\Q$ which is $\Q$-anisotropic. Then $G(\Q)$ is compact.
\end{enumerate}
\end{Fact}


\begin{Prop}\label{algebraic groups with dfg subgroup}
Let $H$ be a connected (not necessarily linear) algebraic group over $\Q$. Suppose that $H(\M)$ contains an open $\Q$-definable $dfg$ subgroup $G$, then $H$ is linear, solvable, and the algebraic torus $H/H_{u}$ (from part (i) of the Fact above) splits over $\Q$.
\end{Prop}
\begin{proof}
The algebraic group $H$ admits a short exact sequence of connected algebraic groups over $\Q$
\[
1\lra L\lra H\lra A\lra 1
\]
with $L$ a linear algebraic, and $A$ an abelian variety.
Now $G(\Q)/L(\Q)$ definably over $\Q$ embeds in $A(\Q)$ which is compact, so  $G(\Q)/L(\Q)$ is compact so definably compact.
If $A$ is nontrivial, then $dim(L) < dim(H)$, so as $G(\Q)$ is open in $H(\Q)$, $G(\Q)/L(Q)$ is infinite.
By Lemma \ref{dfg group has no compact parts} this implies that  $G(\Q)/L(\Q)$ is not definably compact, so not compact, a contradiction. So $A$ is trivial and hence $H$ is linear.

Let $R = \rad(H)$ be the solvable radical of $H$, the maximal connected normal solvable algebraic subgroup of $H$, which is automatically defined over $\Q$.  Let $S$ be the quotient $H/R$ a connected semisimple algebraic group over $\Q$. We want to show that $S$ is trivial, so $H$ will be solvable, as desired.
Suppose for a contradiction that $S$ is nontrivial.

As in  the statement of Lemma 2.19,  we define $G/R(\K)$ to be the set of cosets $g/R(\K)$ in $H(\K)/R(\K)$ for $g\in G$, which (as $R$ is normal in $H$ and $G$ an open $dfg$ subgroup of $H(\K)$) is an open $\Q$-definable $dfg$ subgroup of $S(\K)$. As $G/R(\K)$ is infinite, by Lemma \ref{dfg group has no compact parts} $G/R(\K)$ is not definably compact, so $G(\Q)/R(\Q)$ is not compact. But the latter is a closed subgroup of $S(\Q)$, whereby $S(\Q)$ is not compact. By Fact \ref{structure theorem} (iv), $S$ is $\Q$-isotropic. So by Fact \ref{structure theorem} (iii), $S$ has a proper parabolic subgroup $P$ defined over $\Q$.  As $S/P$ is a complete variety defined over $\Q$, $(S/P)(\Q)$ is compact, so $(S/P)(\K)$ is definably compact.  Let $X$ be the image of $G$ (or of $G/R(\K)$) in $(S/P)(\K)$. Then $X$ is also definably compact, again contradicting Lemma \ref{dfg group has no compact parts}.  This contradiction proves that $S$ was trivial, and so $H = R$ is solvable.

We finally have to prove that the toric part $T = H/H_{u}$ of $H$ splits over $\Q$.  $T$ is an algebraic torus defined over $\Q$, and by p. 53 of \cite{V. Platonov-A. Rapinchuk} for example, $T$ is an almost direct product of tori $T_{1}$, $T_{2}$ defined over $\Q$ where $T_{1}$ is $\Q$-split and $T_{2}$ has no nontrivial split subtorus over $\Q$. We have to show that $T_{2}$ is trivial.  Otherwise $T_{2}$ is nontrivial, and by Theorem 3.1 of \cite{V. Platonov-A. Rapinchuk} $T_{2}(\Q)$ is compact so $T_{2}(\K)$ is definably compact. Now  $H$ has a proper normal algebraic subgroup $H_{1}$ defined over $\Q$ (namely the preimage of $T_{1}$ under the homomorphism $H \to H/H_{u}$) such that the quotient $H/H_{1} = T_{2}$.
But by \ref{dfg group has no compact parts}, as usual the image of $G$ in $(H/H_{1}(\K)$ is infinite and not definably compact, a contradiction. This completes the proof of the Proposition.
\end{proof}



\begin{Cor}\label{algebraic groups with dfg subgroup is trigonalizable over Qp}
Let $H$ be a connected algebraic group over $\Q$. If $H(\K)$ contains an open $\Q$-definable $dfg$ subgroup $G$ say, then $H$ is trigonalizable over $\Q$.
\end{Cor}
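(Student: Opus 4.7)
The plan is to simply chain together Proposition \ref{algebraic groups with dfg subgroup} and Fact \ref{structure theorem}(iii); essentially no new work is required. First I would invoke Proposition \ref{algebraic groups with dfg subgroup} applied to the open $dfg$ subgroup $G(\Q) \leq H(\Q)$ to conclude that $H(\Q)$ is linear and solvable, and moreover that its maximal torus $T = H(\Q)/H_u$ splits over $\Q$ (i.e.\ $T = T_{\mathrm{spl}}$ in the notation of that proof).

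Next I would note that the hypothesis of Fact \ref{structure theorem}(iii) is now met: $H(\Q)$ is a linear solvable algebraic group whose torus quotient splits over $\Q$, and the unipotent radical $H_u$ is automatically trigonalizable (it is conjugate into upper unitriangular form over $\Q$ by Kolchin's theorem, which is implicit in the statement of Fact \ref{structure theorem}). Combining the splitting of $T$ with the trigonalizability of $H_u$ gives a simultaneous conjugation over $\Q$ of $H(\Q)$ into upper triangular form inside some $\GL_n$, which is exactly what ``trigonalizable over $\Q$'' means. Hence $H(\Q)$ is trigonalizable over $\Q$, as required.

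There is no real obstacle here; the entire content has already been absorbed into Proposition \ref{algebraic groups with dfg subgroup} (the hard model-theoretic input, via Lemma \ref{dfg group has no compact parts} and the distality of $\Th(\Q)$) and into Fact \ref{structure theorem} (the purely algebraic splitting statement). The corollary is thus a one-line deduction.
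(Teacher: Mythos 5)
Your proposal is correct and follows exactly the paper's route: the paper likewise derives the corollary directly by combining Proposition \ref{algebraic groups with dfg subgroup} (linear, solvable, split torus) with Fact \ref{structure theorem}(iii). The extra remarks about Kolchin's theorem are harmless elaboration but add nothing beyond what the paper already absorbs into the cited fact.
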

\begin{proof}  By the previous proposition $H$ is linear, solvable, and $H/H_{u}$ is a $\Q$-split torus.
As we are in characteristic $0$ the unipotent part $H_{u}$ of $H$ is split (i.e. admits a subnormal series of algebraic subgroups over $\Q$ whose quotients are isomorphic over $\Q$ to the additive group).
Hence $H$ itself is a $\Q$-split solvable group over $\Q$. By Fact\label{structure theorem} (ii), $H$ is trigonalizable over $\Q$.
\end{proof}

\subsection{$dfg$ Groups and  $f$-generic types}

Let $A$, $B$, $C$ be  definable groups, defined over $\Q$, such that $A\leq B$ and $\pi: B\longrightarrow C$ is $\Q$-definable, such that moreover
\[
1\longrightarrow A \xlongrightarrow{i}  B \xlongrightarrow{\pi}   C \longrightarrow 1
\]
is a short exact sequence, where $i: A\longrightarrow B$ is the inclusion map. Since $\Th(\Q)$ has definable Skolem functions (see \cite{Dries-skolem-functions}), let  $f: C\longrightarrow B$ be a  $\Q$-definable section of $\pi$. Every element $b$ of $B$ can be written uniquely as
$af(\pi(b))$ for some $a\in A$.

We will be freely using Facts 1.5 and 1.6 in the following.

\begin{Lemma}\label{tp(pi(b)) and tp(a) are f-generic}
Fix notation as above. Suppose that $B$ is definably amenable, then both $A$ and $C$ are definably amenable. Moreover if $b\in B(\bar \M)$ is such that $\tp(b/\M)$ is an $f$-generic type, then both $\tp(\pi(b)/\M)$ and $\tp(a/\M)$ are $f$-generic types of $C$ and $A$ respectively
\end{Lemma}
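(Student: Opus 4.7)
The plan is to use Fact \ref{f-generic-type-has-bounded-orbit}(iii), which characterises $f$-generic types as exactly those global types with bounded orbit under the acting group. Since $b$ is $f$-generic in $B$, the $B(\M)$-orbit $\{b'\cdot \tp(b/\M) : b'\in B(\M)\}\sq S_B(\M)$ is bounded, and I want to transfer this boundedness to $C$ and to $A$ via the two natural maps between flows.

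For $\tp(\pi(b)/\M)$, I would use that a definable homomorphism $\pi: B\longrightarrow C$ induces a continuous, equivariant map $\pi_*: S_B(\M)\longrightarrow S_C(\M)$ in the following sense: for any $c\in C(\M)$ and any lift $b'\in B(\M)$ with $\pi(b')=c$, one has $c\cdot \pi_*(\tp(b/\M)) = \tp(\pi(b'b)/\M) = \pi_*(b'\cdot \tp(b/\M))$. Hence the $C(\M)$-orbit of $\tp(\pi(b)/\M)$ is a quotient of the $B(\M)$-orbit of $\tp(b/\M)$, so is bounded, and $\tp(\pi(b)/\M)$ is $f$-generic in $C$.

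For $\tp(a/\M)$, the idea is to use centrality of $A$ to build an $A(\M)$-equivariant map from (the $A(\M)$-orbit of) $\tp(b/\M)$ to (the $A(\M)$-orbit of) $\tp(a/\M)$. Concretely, the $\M$-definable map $\phi: B\longrightarrow A$ given by $\phi(b) = f(\pi(b))^{-1}b$ induces a continuous map $\phi_*: S_B(\M)\longrightarrow S_A(\M)$ with $\phi_*(\tp(b/\M)) = \tp(a/\M)$. For any $a'\in A(\M)$, since $A$ is central and $a'\in \ker(\pi)$, we compute
\[
\phi(a'b) = f(\pi(a'b))^{-1}(a'b) = f(\pi(b))^{-1} a' b = a'\cdot f(\pi(b))^{-1} b = a'\phi(b),
\]
so $\phi_*(a'\cdot \tp(b/\M)) = a'\cdot \phi_*(\tp(b/\M)) = a'\cdot \tp(a/\M)$. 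Thus the $A(\M)$-orbit of $\tp(a/\M)$ is the image under $\phi_*$ of the $A(\M)$-orbit of $\tp(b/\M)$; since the latter is bounded (being contained in the bounded $B(\M)$-orbit), so is the former, and Fact \ref{f-generic-type-has-bounded-orbit}(iii) yields that $\tp(a/\M)$ is $f$-generic in $A$.

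The only subtle point is the equivariance computation for $\phi_*$: it rests squarely on the centrality hypothesis ($a'b = ba'$ for $a'\in A$) together with $\pi(a')=1$ so that $f(\pi(a'b)) = f(\pi(b))$. Without centrality, $a'$ could not be commuted past $f(\pi(b))^{-1}$ and the map would not intertwine the $A(\M)$-actions. Everything else is formal pushforward on type spaces, using the definable Skolem section $f$ (which exists by the invocation of \cite{Dries-skolem-functions}).
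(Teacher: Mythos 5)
Your proof is correct and follows essentially the same route as the paper: the heart of both arguments is the computation $\phi(a'b)=a'\phi(b)$ for the section-based map $\phi(x)=f(\pi(x))^{-1}x$, which rests exactly on centrality of $A$ together with $\pi(a')=1$, and the $C$-part is the formal pushforward along $\pi$. The only cosmetic difference is that the paper invokes the equivalent characterization of $f$-genericity via $G^{00}$-invariance (Fact \ref{f-generic-type-has-bounded-orbit}(iv), applied to $a_1\in A^{00}$ and to $\pi(B^{00})$) where you use bounded orbits (item (iii) of the same Fact).
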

\begin{proof}
Since $\tp(b/\M)$ is $B^{00}$-invariant, we see that $\tp(\pi(b)/\M)$ is $\pi(B^{00})$-invariant, so $C$ is definably amenable and $\tp(\pi(b)/\M)$ is an $f$-generic type of $C$.

Let $\eta: B\rightarrow A$ be the function given by $\eta(x)=x\cdot f(\pi(x))^{-1}$, then $a=\eta(b)$. For each $a_0\in A$, we have
\[
a_0\cdot a=a_0\cdot b\cdot f(\pi(b))^{-1}=(a_0\cdot b)\cdot f(\pi(a_0\cdot b))^{-1}=\eta(a_0\cdot b)\]
Since $\tp(b/\M)$ is $f$-generic, we see that
\[
A\cdot \tp(b/\M)=\{\tp(a_0\cdot b/\M)|\ a_0\in A\}
\]
is bounded. So  the $A$-orbit  of $\tp(a/\M)$
\[
\{\tp(a_0\cdot a/\M)|\ a_0\in H\}=\{\tp (\eta(a_0\cdot b)/\M)|\ a_0\in A\}
\]
is bounded. We conclude that $A$ is definably amenable and  $\tp(a/\M)$ is an $f$-generic type of $C$.


\end{proof}

\begin{Lemma}\label{both A and C are dfg}
Let the notation be as above. Suppose that $B$ has $dfg$. Then both $A$ and $C$ have $dfg$.
\end{Lemma}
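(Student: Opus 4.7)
The plan is to realize the $dfg$ types of $A$ and $C$ as images of a single global $dfg$ type of $B$ under $\Q$-definable functions. Concretely, let $p=\tp(b/\M)\in S_B(\M)$ be a global $f$-generic type definable over some small model $M\succ \Q$, witnessing $dfg$ of $B$. By the preceding Lemma \ref{tp(pi(b)) and tp(a) are f-generic}, both $\tp(\pi(b)/\M)$ and $\tp(a/\M)$ are $f$-generic types of $C$ and $A$, where $a={(f(\pi(b)))}^{-1}b$ is the $A$-component of $b$ in the fixed section. It remains only to check that these two types are definable over $M$.

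For the quotient $C$, the map $\pi$ is $\Q$-definable, and the pushforward of a definable type by a definable function is again definable (over the same parameters): for any $L_\M$-formula $\psi(y,c)$, the set of $c$ with $\psi(y,c)\in\pi_*(p)$ equals the set of $c$ with $\psi(\pi(x),c)\in p$, which is $M$-definable because $p$ is definable over $M$. Hence $\tp(\pi(b)/\M)$ is definable over $M$, and combined with $f$-genericity this gives $dfg$ of $C$. For the kernel $A$, observe that by definable Skolem functions in $\Th(\Q)$ the section $f$ is $\Q$-definable, so the map $h:B\longrightarrow A$ sending $b\mapsto {(f(\pi(b)))}^{-1}b$ is $\Q$-definable; applying the same pushforward argument to $h$ shows $\tp(a/\M)=h_*(p)$ is also definable over $M$, and together with its $f$-genericity this yields $dfg$ of $A$.

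There is essentially no major obstacle: the $f$-genericity of both projected types was already established in Lemma \ref{tp(pi(b)) and tp(a) are f-generic}, and the preservation of definability under pushforward by definable maps is routine. The only point requiring attention is the existence of a $\Q$-definable section of $\pi$, which is precisely why the setup invokes definable Skolem functions for $\Th(\Q)$; once this section is fixed the decomposition $b=f(\pi(b))a$ is entirely $\Q$-definable, and the two coordinates of $p$ inherit definability over $M$ automatically.
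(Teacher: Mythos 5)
Your proposal is correct and follows essentially the same route as the paper: both take a global $f$-generic type of $B$ definable over a small model, invoke Lemma \ref{tp(pi(b)) and tp(a) are f-generic} for $f$-genericity of the two components, and observe that pushforwards under the $\Q$-definable maps $\pi$ and $b\mapsto (f(\pi(b)))^{-1}b$ preserve definability over that model. You merely spell out the pushforward step that the paper dismisses with ``clearly.''
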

\begin{proof}
Let $\tp(b/\M)$ be a global $f$-generic type  definable over some small model $M_0$. Then, by Lemma \ref{tp(pi(b)) and tp(a) are f-generic}, we see that both $\tp(\pi(b)/\M)$ and $\tp(a/\M)$ are $f$-generic types of $C$ and $A$ respectively.

Assume that $M_0$ contains the parameters of $A,B,C$ and $\pi$, then $a,\pi(b)\in \dcl(M_0,b)$, we see that $\tp(\pi(b)/\M)$ and $\tp(a/\M)$ are  definable over $M_0$, and hence $dfg$ types of $C$ and $A$, respectively. This concludes the proof that  $A$ and $C$ have $dfg$.
\end{proof}

\begin{Rmk}
Let $A,B$ be algebraic groups over $\Q$ such that $A$ is a subgroup of $B$, and $B(\K)$ is $dfg$. Since the coset space $(B/A)(\K)$ is definable, a similar argument to the proof of Lemma \ref{both A and C are dfg} shows that $A(\K)$ is also  $dfg$.
\end{Rmk}

\begin{Prop}\label{open dfg subgroup of alg groups}
Let $H$ be an algebraic group over $\Q$ and $G$ an open $dfg$ subgroup of $H(\K)$ definable over $\Q$. Then $G$  has finite index in $H(\K)$
\end{Prop}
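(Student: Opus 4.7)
The plan is to proceed by induction on $d = \dim H$. After replacing $H$ with $H^0$ and $G$ with $G \cap H^0$ (still open $dfg$ and of finite index in $G$), I may assume $H$ is connected. The base $d = 1$ is exactly Proposition \ref{dfg-subgroup-has-finite-index}. For the step, Corollary \ref{algebraic groups with dfg subgroup is trigonalizable over Qp} forces $H$ to be trigonalizable over $\Q$, and I would next locate a one-dimensional normal $\Q$-algebraic subgroup $A \leq H$: if $H$ is abelian, any $\Q$-defined one-dimensional subgroup works; otherwise I take the last nontrivial term of the derived series of $H$, which is an abelian unipotent $\Q$-subgroup isomorphic to some $\Ga^k$, and observe that the conjugation action of $H$ on this vector group has image a connected solvable linear algebraic group whose maximal torus (a quotient of that of $H$) is $\Q$-split, so by Fact \ref{structure theorem} this image is trigonalizable over $\Q$ and yields an $H$-invariant $\Q$-defined line $A \cong \Ga$. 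The quotient $\pi\colon H \to H/A$ sends $G$ to $\pi(G) = GA/A$, which is open in $H/A$ (since $\pi$ is open) and has $dfg$ by the remark after Lemma \ref{dfg group has no compact parts}; the inductive hypothesis gives $[H : GA] < \infty$, so since $A$ is normal in $GA$ it remains to prove $[A : A \cap G] < \infty$.

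For that I would split on the size of the image of the conjugation homomorphism $\chi\colon H \to \operatorname{Aut}(A)$ restricted to $G$. If $\chi(G)$ is finite --- which covers both the case of $A$ a one-dimensional torus (so $\operatorname{Aut}(A) \cong \mathbb{Z}/2\mathbb{Z}$) and the case $A \cong \Ga$ with $\chi(G)$ finite --- then $G_1 := G \cap \ker\chi$ has finite index in $G$, is still open $dfg$, and centralizes $A$. Lemma \ref{both A and C are dfg} applied to the central extension $1 \to A \cap G_1 \to G_1 \to G_1/(A \cap G_1) \to 1$ then makes $A \cap G_1$ a $dfg$ subgroup of the one-dimensional algebraic group $A$; as it is also open in $A$, Proposition \ref{dfg-subgroup-has-finite-index} yields $[A : A \cap G_1] < \infty$, hence $[A : A \cap G] < \infty$.

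The genuine obstacle is the remaining case where $\chi(G)$ is infinite; this forces $A \cong \Ga$ (the torus case has finite $\operatorname{Aut}$), and then $\chi\colon H \to \operatorname{Aut}(\Ga) = \Gm$ has algebraic image the full $\Gm$. Being a nontrivial algebraic group homomorphism between smooth algebraic groups, $\chi$ is submersive, so $\chi(G)$ is an open subgroup of $\Gm$ and is $dfg$ as a definable image of $G$. Proposition \ref{dfg-subgroup-has-finite-index} therefore forces $\chi(G)$ to have finite index in $\Gm$, producing some $g \in G$ with $\nu(\chi(g)) > 0$. Now $A \cap G$ is an open, $\Q$-definable, $G$-invariant subgroup of $A = \Ga$, and the only such subgroups are those of the form $p^n\V$ (for $n \in \Z$) or $A$ itself; but multiplication by $\chi(g)$ sends $p^n\V$ onto the strictly smaller $p^{n+\nu(\chi(g))}\V$, so the $G$-invariance $\chi(g)(A \cap G) = A \cap G$ rules out $p^n\V$. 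Hence $A \cap G = A$, i.e., $A \subseteq G$, giving $[A : A \cap G] = 1$ and completing the induction.
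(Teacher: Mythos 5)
Your proof is correct, and in the inductive step it genuinely diverges from --- and in fact strengthens --- the paper's argument. The paper also inducts on $\dim H$ with base case Proposition \ref{dfg-subgroup-has-finite-index}, but it asserts that trigonalizability yields a short exact sequence $1\to A\to H\to C\to 1$ with $A$ one-dimensional and \emph{central} in $H$, and then invokes Lemma \ref{both A and C are dfg}, whose hypothesis is precisely that centrality. Such a central one-dimensional subgroup need not exist: the $ax+b$ group is trigonalizable over $\Q$ with trivial centre, so the paper's reduction is at best incomplete as written. You instead produce only a \emph{normal} one-dimensional $\Q$-subgroup $A$ (via the last term of the derived series and an $H$-invariant $\Q$-line, using that the relevant torus is $\Q$-split), treat the quotient $H/A$ by induction exactly as the paper does, and then handle $[A:A\cap G]$ by cases on the conjugation character $\chi\colon G\to\operatorname{Aut}(A)$. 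When $\chi(G)$ is finite you recover the paper's central-extension argument after passing to $G\cap\ker\chi$; when $\chi(G)$ is infinite, your valuation argument --- some $g\in G$ has $\nu(\chi(g))>0$, so conjugation by $g$ properly shrinks each $p^n\V$, forcing the $G$-invariant open $\Q$-definable subgroup $A\cap G$ of $\Ga$ to be all of $A$ --- has no counterpart in the paper and is exactly what is needed to cover the non-central case. The steps you leave implicit (a finite-index definable subgroup of a $dfg$ group is $dfg$; the quotients involved are definable, e.g.\ by realizing $G_1/(A\cap G_1)$ as $\pi(G_1)\leq H/A$) are also used without comment elsewhere in the paper, for instance in Corollary \ref{aa}, so your argument is complete and, on this point, more reliable than the published one.
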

\begin{proof}
The proof is by induction on $dim(H)$. The statement holds for $\dim(H)=1$ by Proposition \ref{dfg-subgroup-has-finite-index}.

Now assume that $\dim(H)>1$.

By  Corollary \ref{algebraic groups with dfg subgroup is trigonalizable over Qp}, $H$ is trigonalizable over $\Q$, and hence there is a short exact sequence of algebraic groups over $\Q$
\[
1\longrightarrow A \xlongrightarrow{i}  H \xlongrightarrow{\pi}   C \longrightarrow 1,
\]
where $A$ an $1$-dimensional algebraic group central in $H$, and $C$ an algebraic group of dimension $\dim(H)-1$. Then we have an induced short exact sequence:
\[
1\longrightarrow A(\K)\cap G \xlongrightarrow{i}  G \xlongrightarrow{\pi}   \pi(G) \longrightarrow 1
\]
 By Lemma \ref{both A and C are dfg}, we see that both $A(\K)\cap G$ and $\pi(G)$ are open $dfg$ subgroups of $A(\K)$ and $C(\K)$ respectively. By induction hypothesis,  both $A(\K)\cap G$ and $\pi(G)$ have finite index in $A(\K)$ and $C(\K)$ respectively.  This concludes that $G$ has finite index in $H(\K)$.
\end{proof}

\begin{Cor}\label{aa}
Suppose that $G_1\leq G_2$ are $dfg$ groups definable over $\Q$. If $\dim(G_1)=\dim(G_2)$, then $G_1$ has finite index in $G_{2}$.
\end{Cor}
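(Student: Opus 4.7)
The plan is to reduce to Proposition~\ref{open dfg subgroup of alg groups} by embedding a finite-index-finite-kernel alteration of $G_1$ as an open subgroup of an algebraic envelope of $G_2$. First, I would apply Corollary~\ref{fsg-dfg-f-to-1} to $G_2$: this produces a $\Q$-definable finite index subgroup $A_2\leq G_2$, a finite normal subgroup $A_0\leq A_2$, and a $\Q$-definable isomorphism identifying $A_2/A_0$ with an open subgroup of an algebraic group $H\sq \K^m$ defined over $\Q$.

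Next, set $A_1 := G_1\cap A_2$, a $\Q$-definable subgroup of $G_1$. Then $[G_1:A_1]\leq [G_2:A_2]<\infty$ and $\dim A_1 = \dim G_1 = \dim G_2 = \dim A_2$. Before pushing $A_1$ into $H$ I would verify that $A_1$ is $dfg$: take a global $f$-generic type $p\in S_{G_1}(\M)$ definable over some small $M$; since $A_1$ has finite index in $G_1$, some coset $gA_1$ lies in $p$ for $g\in G_1(\Q)$, and then $g^{-1}p\in S_{A_1}(\M)$ is still definable (over $M\cup\{g\}$) with its $A_1(\M)$-orbit contained in the bounded $G_1(\M)$-orbit of $p$, so $A_1$ is $dfg$ by Fact~\ref{f-generic-type-has-bounded-orbit}.

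Now consider $\bar A_1 := A_1 A_0/A_0 \leq A_2/A_0$. Since $A_0$ is finite and $\Q$-definable, the quotient map $A_1\to \bar A_1$ is a definable surjective homomorphism, so $\bar A_1$ is $dfg$ by the Remark appearing between Lemma~\ref{dfg group has no compact parts} and Fact~\ref{structure theorem}. As $\dim \bar A_1 = \dim A_1 = \dim A_2 = \dim(A_2/A_0)$, $\bar A_1$ is open in $A_2/A_0$, and hence open in the algebraic group $H$. Proposition~\ref{open dfg subgroup of alg groups} then gives $[H:\bar A_1]<\infty$, and unwinding through the finite kernel $A_0$ and the finite-index inclusion $A_2\leq G_2$ yields $[G_2:A_1]<\infty$, whence $[G_2(\Q):G_1(\Q)]<\infty$ as required.

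The only delicate points are keeping the $dfg$ property alive through the two small adjustments---the passage to the finite index subgroup $A_1$ and the quotient by the finite normal $A_0$---needed to land inside $H$; both are routine, the first by the type-translation trick above and the second by the fact already recorded that $dfg$ is preserved under definable surjections. Once $\bar A_1$ is realized as an open $dfg$ subgroup of $H$, Proposition~\ref{open dfg subgroup of alg groups} immediately delivers finite index, so the real content of the corollary is wholly concentrated in that proposition.
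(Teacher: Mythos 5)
Your proposal is correct and follows essentially the same route as the paper: apply Corollary~\ref{fsg-dfg-f-to-1} to $G_2$, check that $G_1\cap A_2$ remains $dfg$, push it through the finite-to-one map into the algebraic group $H$, and invoke Proposition~\ref{open dfg subgroup of alg groups} before unwinding the finite kernel and finite-index inclusion. The only (harmless) cosmetic difference is that the paper applies Proposition~\ref{open dfg subgroup of alg groups} twice and compares the two images inside $H$, whereas you apply it once directly to $\bar A_1$; you also spell out the translation argument for why passing to a finite-index subgroup preserves $dfg$, which the paper merely asserts.
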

\begin{proof}
By Corollary \ref{fsg-dfg-f-to-1}, there are a finite index subgroup $A$ of $G_{2}$  and a finite normal subgroup $A_0$ of $A$, with both $A$ and $A_{0}$ definable over $\Q$, and such that  $A/A_0$ is $\Q$-definably isomorphic to an open subgroup of $H(\K)$ for some algebraic group $H$ defined over $\Q$.  Let $f: A\lra H(\K)$ be the finite-to-one homomorphism with $\ker (f)=A_0$
as in Fact  \ref{G-finite-to-one-H}. By Proposition \ref{open dfg subgroup of alg groups}, $\im(f)$ has finite index in $H(\Q)$. As $G_1\cap A$ has finite index in $G_1$, we see that $G_1\cap A$ has $dfg$. So the image $f(G_1\cap A)$ is an open $dfg$ subgroup of $H(\K)$, and hence has finite index in $H(\K)$ by Proposition \ref{open dfg subgroup of alg groups}. It follows easily that $G_{1}$ has finite index in $G_{2}$.
\end{proof}

A generalization of Corollary \ref{aa} has been proved in \cite{AJ}, Corollary 4.4.

\vspace{5mm}
\noindent
We now prove the following
two structure theorems  for $dfg$ groups using Proposition \ref{open dfg subgroup of alg groups}:

\begin{Thm}\label{normal sequence of open dfg subgroups}
Let $G$ be an open $dfg$ $\Q$-definable subgroup of $H(\K)$ where $H$ is an algebraic group defined over $\Q$. Then  there is a normal sequence
\[
\{\id_G\}=G_0\vartriangleleft ... \vartriangleleft G_i \vartriangleleft G_{i+1} \vartriangleleft... \vartriangleleft G_n=G
\]
such that each $G_{i}$ is definable over $\Q$ and each $G_{i+1}/G_i$ is definably over $\Q$ isomorphic to a one-dimensional $dfg$ group.
\end{Thm}
\begin{proof}
Induction on $\dim(G)$. Clearly, the statement is trivial if $\dim(G)=1$. Now assume that $\dim(G)>1$.
As we showed in Proposition \ref{open dfg subgroup of alg groups},
there is a short exact sequence
\[
1\longrightarrow A \xlongrightarrow{i}  H \xlongrightarrow{\pi}   C \longrightarrow 1,
\]
with $A$ a one-dimensional algebraic group central in $H$, and $C$ an algebraic group of dimension $\dim(H)-1$, where everything is over $\Q.$ Both $A(\K)\cap G$ and $\pi(G)$ are open $dfg$ subgroups of $A(\K)$ and $C(\K)$ respectively. By induction hypothesis, there is  a normal sequence
\[
\{\id_C\}=D_0\vartriangleleft ... \vartriangleleft D_i \vartriangleleft D_{i+1} \vartriangleleft... \vartriangleleft D_n=\pi(G)
\]
such that each $D_{i+1}/D_i$ is definably isomorphic to a one-dimensional $dfg$ group over $\Q$. It is easy to see that the normal sequence
\[
\{\id_G\}\leq G_0=G\cap A(\K)\vartriangleleft ... \vartriangleleft  G_{i+1}=G\cap\pi^{-1}(D_i) \vartriangleleft... \vartriangleleft G_{n+1}=G
\]
meets our requirements.
\end{proof}


\begin{Thm}
Let $G$ be a group definable over $\Q$ with $dfg$. Then  there is a normal sequence
\[
G_0\vartriangleleft ... \vartriangleleft G_i \vartriangleleft G_{i+1} \vartriangleleft... \vartriangleleft G_n
\]
of $\Q$-definable groups, such that $G_0$ is finite, $G_n$ is a finite index subgroup of $G$, and  $G_{i+1}/G_i$ is isomorphic to a one-dimensional $dfg$ group over $\Q$.
\end{Thm}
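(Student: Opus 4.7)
The plan is to reduce the general case to the case of an open dfg subgroup of an algebraic group, where Lemma \ref{normal sequence of open dfg subgroups} already supplies the desired normal sequence, and then to pull that sequence back through the finite-to-one homomorphism provided by Corollary \ref{fsg-dfg-f-to-1}.

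First I would apply Corollary \ref{fsg-dfg-f-to-1} to $G$. This yields a finite index subgroup $A\leq G(\Q)$, a finite subgroup $A_0\sq A$, and a $\Q$-definable isomorphism $\varphi: A/A_0 \longrightarrow K$, where $K$ is an open definable subgroup of some algebraic group $H$ over $\Q$. The next step is to verify that $K$ is itself a dfg group. Since $A$ has finite index in $G$ and every global f-generic type of $G$ is $G^{00}$-invariant with $G^{00}\leq A$, one may translate a global dfg type of $G$ into $A$, showing that $A$ has dfg; then, applying the remark after Lemma \ref{dfg group has no compact parts} to the definable surjective homomorphism $\pi_0: A\longrightarrow A/A_0\cong K$, the group $K$ inherits dfg.

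Thus $K$ is an open dfg subgroup of the algebraic group $H$, so Lemma \ref{normal sequence of open dfg subgroups} furnishes a normal sequence
\[
\{\id\}=K_0\vartriangleleft K_1\vartriangleleft\cdots\vartriangleleft K_n=K
\]
with each quotient $K_{i+1}/K_i$ isomorphic to a one-dimensional dfg group over $\Q$. I would then pull this sequence back to $A$ via the surjection $\pi:=\varphi^{-1}\circ\pi_0: A\longrightarrow K$ (whose kernel is $A_0$), setting $G_0:=A_0$ and $G_{i+1}:=\pi^{-1}(K_i)$ for $i=0,\dots,n$. Normality is preserved by taking preimages, and the third isomorphism theorem gives
\[
G_{i+2}/G_{i+1}=\pi^{-1}(K_{i+1})/\pi^{-1}(K_i)\cong K_{i+1}/K_i,
\]
so each successive quotient is a one-dimensional dfg group. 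Finally $G_0=A_0$ is finite by construction and $G_{n+1}=A$ has finite index in $G$, which gives the required sequence.

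I do not expect a serious obstacle here: the argument is essentially a stitching exercise. The only step that requires some care is the closure of the class of dfg groups under passing to finite index subgroups and to definable quotient images; both follow from the $G^{00}$-invariance characterisation in Fact \ref{f-generic-type-has-bounded-orbit} together with the fact that definability of the dfg type is preserved under pushing forward by a $\Q$-definable homomorphism. Everything else is a formal manipulation of the sequence produced by Lemma \ref{normal sequence of open dfg subgroups}.
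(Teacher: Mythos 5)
Your proposal is correct and takes essentially the same route as the paper: reduce via Corollary \ref{fsg-dfg-f-to-1} to an open $dfg$ subgroup of an algebraic group, apply Lemma \ref{normal sequence of open dfg subgroups} there, and pull the normal sequence back through the finite-to-one map (the paper merely inlines one step of that lemma's induction before invoking it, which changes nothing). One small indexing slip: with $G_{i+1}:=\pi^{-1}(K_i)$ you get $G_1=\pi^{-1}(\{\id\})=A_0=G_0$, so the first successive quotient is trivial rather than one-dimensional; setting $G_i:=\pi^{-1}(K_i)$ for $i=0,\dots,n$ fixes this.
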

\begin{proof}
By Corollary \ref{fsg-dfg-f-to-1}, there is a finite index subgroup $A_{1}\leq G$ and a finite normal subgroup $A_0\sq A$, all $\Q$-definable,  such that  $A_1/A_0$ is $\Q$-definably isomorphic to an  open subgroup of $H(\K)$ for $H$ some (connected)  algebraic group defined over $\Q$. Let $f: A_1\lra H(\K)$ be the finite-to-one homomorphism with $\ker (f)=A_0$ as in Fact \ref{G-finite-to-one-H}. By Proposition \ref{open dfg subgroup of alg groups}, $\im(f)$ has finite index in $H(\K)$. Moreover $H$ is trigonalizable over $\Q$ by Proposition \ref{algebraic groups with dfg subgroup}.

Clearly, the statement is trivial if $\dim(G)=1$. Now assume that $\dim(G)>1$. As we showed in Proposition \ref{open dfg subgroup of alg groups},
there is a short exact sequence
\[
1\longrightarrow A \xlongrightarrow{i}  H \xlongrightarrow{\pi}   C \longrightarrow 1,
\]
with $A$ a one-dimensional algebraic group central in $H$, and $C$ an algebraic group of dimension $\dim(H)-1$, all defined over $\Q$. Both $A(\K)\cap \im(f)$ and $\pi(\im(f))$ are open $dfg$ subgroups of $A(\K)$ and $C(\K)$ respectively. By Theorem \ref{normal sequence of open dfg subgroups}, there is a normal sequence
\[
\{\id_C\}=D_0\vartriangleleft ... \vartriangleleft D_i \vartriangleleft D_{i+1} \vartriangleleft... \vartriangleleft D_n=\pi(\im(f))
\]
such that each $D_{i+1}/D_i$ is definably isomorphic to a one-dimensional $dfg$ group over $\Q$. Let $E_i=\pi^{-1}(D_i)\cap \im (f).$ It is easy to see that $D_i=\pi(E_i)$, thus \blue{$E_i/(A(\K)\cap \im(f))\cong D_i$}. So
\blue{\[
E_{i+1}/E_{i}\cong\big(E_{i+1}/(A(\K)\cap \im(f))\big)/\big(E_i/(A(\K)\cap \im(f))\big)\cong D_{i+1}/D_i
\]}
is definably isomorphic to a one-dimensional $dfg$ group over $\Q$. We conclude that
\[
E_0\vartriangleleft  ... \vartriangleleft E_{i} \vartriangleleft E_{i+1} \vartriangleleft... \vartriangleleft E_n=\im(f)
\]
is a normal sequence such that $ E_0=A(\K)\cap \im(f)$ and each $E_{i+1}/E_i$ is definably isomorphic to a one-dimensional $dfg$ group over $\Q$.
Let $G_0=A_0$ and $G_{i+1}=f^{-1}(E_i)$, we see that the normal sequence
\[
G_0\vartriangleleft ... \vartriangleleft G_i \vartriangleleft G_{i+1} \vartriangleleft... \vartriangleleft G_{n+1}=A_{1}
\]
satisfies our requirement.
\end{proof}

Finally we answer the Conjecture \ref{Main Conjecture 2} in the $p$-adic case, making use of:

\begin{Fact}\cite{Y-Trigonalizable}\label{trigonalizable algebraic  group almost periodic}
Let $H$ be a trigonalizable algebraic  group over $\Q$. Then every global complete $f$-generic type of $H(\K)$ is almost periodic.
\end{Fact}

By Proposition \ref{open dfg subgroup of alg groups} and Fact \ref{trigonalizable algebraic  group almost periodic}, we easily conclude that
\begin{Thm}
Suppose that $G$ is a $dfg$ group over $\Q$. Then every global complete $f$-generic type of $G$ is almost periodic.
\end{Thm}

\section*{Acknowledgements}
The authors would like to thank the referee for carefully reading our paper and offering detailed comments, which have been very helpful for us as we revise the paper.

The first author was supported by NSF grants
DMS-1665035, DMS-1760212 and DMS-2054271. The second author
was supported by The National Social Science Fund of China (Grant No. 20CZX050).

\end{document}